\begin{document}

\newtheorem{theorem}{Theorem}    
\newtheorem{proposition}[theorem]{Proposition}
\newtheorem{conjecture}[theorem]{Conjecture}
\def\theconjecture{\unskip}
\newtheorem{corollary}[theorem]{Corollary}
\newtheorem{lemma}[theorem]{Lemma}
\newtheorem{sublemma}[theorem]{Sublemma}
\newtheorem{fact}[theorem]{Fact}
\newtheorem{observation}[theorem]{Observation}
\theoremstyle{definition}
\newtheorem{definition}{Definition}
\newtheorem{notation}[definition]{Notation}
\newtheorem{remark}[definition]{Remark}
\newtheorem{question}[definition]{Question}
\newtheorem{questions}[definition]{Questions}
\newtheorem{example}[definition]{Example}
\newtheorem{problem}[definition]{Problem}
\newtheorem{exercise}[definition]{Exercise}

\numberwithin{theorem}{section}
\numberwithin{definition}{section}
\numberwithin{equation}{section}

\def\reals{{\mathbb R}}
\def\torus{{\mathbb T}}
\def\integers{{\mathbb Z}}
\def\naturals{{\mathbb N}}
\def\complex{{\mathbb C}\/}
\def\distance{\operatorname{distance}\,}
\def\support{\operatorname{support}\,}
\def\dist{\operatorname{dist}\,}
\def\Span{\operatorname{span}\,}
\def\degree{\operatorname{degree}\,}
\def\dim{\operatorname{dim}\,}
\def\codim{\operatorname{codim}}
\def\trace{\operatorname{trace\,}}
\def\Span{\operatorname{span}\,}
\def\dimension{\operatorname{dimension}\,}
\def\codimension{\operatorname{codimension}\,}
\def\nullspace{\scriptk}
\def\kernel{\operatorname{Ker}}
\def\Re{\operatorname{Re\,} }
\def\Im{\operatorname{Im\,} }
\def\eps{\varepsilon}
\def\lt{L^2}
\def\diver{\operatorname{div}}
\def\curl{\operatorname{curl}}
\def\expect{\mathbb E}
\def\bull{$\bullet$\ }
\def\det{\operatorname{det}}
\def\Det{\operatorname{Det}}

\newcommand{\norm}[1]{ \|  #1 \|}
\newcommand{\Norm}[1]{ \Big\|  #1 \Big\| }
\newcommand{\set}[1]{ \left\{ #1 \right\} }
\def\one{{\mathbf 1}}
\newcommand{\modulo}[2]{[#1]_{#2}}
\newcommand{\abr}[1]{ \langle  #1 \rangle}

\def\bp{\mathbf p}
\def\bff{\mathbf f}
\def\bg{\mathbf g}

\def\scriptf{{\mathcal F}}
\def\scriptg{{\mathcal G}}
\def\scriptm{{\mathcal M}}
\def\scriptb{{\mathcal B}}
\def\scriptc{{\mathcal C}}
\def\scriptt{{\mathcal T}}
\def\scripti{{\mathcal I}}
\def\scripte{{\mathcal E}}
\def\scriptv{{\mathcal V}}
\def\scriptw{{\mathcal W}}
\def\scriptu{{\mathcal U}}
\def\scriptS{{\mathcal S}}
\def\scripta{{\mathcal A}}
\def\scriptr{{\mathcal R}}
\def\scripto{{\mathcal O}}
\def\scripth{{\mathcal H}}
\def\scriptd{{\mathcal D}}
\def\scriptl{{\mathcal L}}
\def\scriptn{{\mathcal N}}
\def\scriptp{{\mathcal P}}
\def\scriptk{{\mathcal K}}
\def\scriptP{{\mathcal P}}
\def\scriptj{{\mathcal J}}
\def\frakv{{\mathfrak V}}
\def\frakG{{\mathfrak G}}
\def\frakA{{\mathfrak A}}

\author{Marcos Charalambides}
\address{
        Marcos Charalambides\\
        Department of Mathematics\\
        University of California \\
        Berkeley, CA 94720-3840, USA}
\email{marcos@math.berkeley.edu}

\thanks{Research supported in part by NSF grant DMS-0901569.}

\date{}

\title
[On Restricting Cauchy-Pexider Equations to Submanifolds]
{On Restricting Cauchy-Pexider \\ Equations to Submanifolds}

\begin{abstract}
Sufficient geometric conditions are given which determine when the Cauchy-Pexider functional equation $f(x)g(y)=h(x+y)$ restricted to $x,y$ lying on a hypersurface in $\reals^d$ has only solutions which extend uniquely to exponential affine functions $\reals^d\to\complex$ (when $f$, $g$, $h$ are assumed to be measurable and non-trivial). The Cauchy-Pexider-type functional equations $\prod_{j=0}^df_j(x_j)=F(\sum_{j=0}^dx_j)$ for $x_0,\ldots,x_d$ lying on a curve and $f_1(x_1)f_2(x_2)f_3(x_3)=F(x_1+x_2+x_3)$ for $x_1,x_2,x_3$ lying on a hypersurface are also considered.
\end{abstract}

\maketitle

\section{Introduction}

The restricted Cauchy-Pexider functional equation is
\begin{equation}\label{Cauchy}
f(x)g(y)=h(x+y) \text{ for all } (x,y)\in S
\end{equation}
 where $S$ is some subset of $\reals^d\times\reals^d$, $d\ge 1$ and $f$, $g$ and $h$ are complex-valued functions defined on suitable domains.

 The case when (\ref{Cauchy}) holds for $S=\reals^d\times\reals^d$ is the classical Cauchy-Pexider equation; see \cite{aczel} for an introduction to this and other related functional equations. In this case, if $f$, $g$, $h$ are measurable functions which vanish only on sets of measure zero then there exist $v\in\complex^d$ and $c_1,c_2\in\complex$ such that $f(x) = c_1e^{v\cdot x}$ and $g(x) = c_2e^{v\cdot x}$ for Lebesgue-almost every $x\in\reals^d$.

For $A,B\subset\reals^d$ and $n\in \mathbb{N}$, define the sumsets $A+B=\{a+b\,\,|\,\,a\in A, b\in B\}$ and $nA=\{a_1+\ldots+a_n\,\,|\,\,a_1,\ldots,a_n\in A\}$. Write $v\cdot x$ for the sum $\sum v_jx_j$ whenever $v\in\complex^d$ and $x\in\reals^d$. Denote the open ball of radius $r$ and centre $c$ in $\reals^d$ by $B_r(c)$. If $U$ is a set in an underlying topological space (which will be clear from context), write $\overline{U}$ for its closure. If $(X,\mathcal{M},\mu)$ is a measure space, we will say that $S$ is $\mu$-null (respectively $\mu$-full) if $S\in\mathcal{M}$ and $\mu(S)=0$ (respectively $\mu(X\setminus S)=0$).

Consider the unit sphere $\mathbb{S}^2\subset\reals^3$ with induced surface measure $\sigma$. In Christ-Shao \cite{christshao} it is shown that in the case when (\ref{Cauchy}) holds for a $\sigma^2$-full set $S\subset\mathbb{S}^2\times\mathbb{S}^2$,  where $f:\mathbb{S}^2\to\complex$, $g:\mathbb{S}^2\to\complex$, $h:\overline{B_2(0)}\to\complex$ are measurable and $f$ and $g$ vanish only on $\sigma$-null sets, it follows that $f$ and $g$ must be of the form $f(x) = c_1e^{v\cdot x}$, $g(x) = c_2e^{v\cdot x}$ for $\sigma$-almost every $x\in\mathbb{S}^2$. 

This phenomenon can fail. Consider first the case of the parabola $P\subset\reals^2$ consisting of all points of the form $(x,x^2)$ for $x\in\reals$. Then, given any non-vanishing measurable function $s:\reals\to\reals$, the functions $f$, $g$, $h$ defined respectively on $P$, $P$ and $2P$ by $f(x) = g(x) = s(x)$ and $h(x+y)=s(x)s(y)$ are measurable and satisfy the functional equation \[f(x)g(y)=h(x+y)\text{ for all }(x,y)\in P^2.\] Indeed, the only issue is whether $h$ is well-defined. But it is straightforward to check that each $z\in 2P$ has a unique, up to order, decomposition as $z=x+y$ for $x,y\in P$. This example can be used to obtain higher-dimensional counterexamples. For instance, consider the hypersurface $C=P\times\reals\subset\reals^3$. Given any non-vanishing measurable function $s:\mathbb{R}\to\mathbb{R}$, the functions $f$, $g$, $h$ defined respectively on $C$, $C$ and $2C$ by $f({\bf x} )=g({\bf x})=s(x_2)$ and $h({\bf x}+{\bf y})=f({\bf x})g({\bf y})=s(x_2)s(y_2)$ are measurable and satisfy the functional equation (\ref{Cauchy}) with $S=C\times C$. 

In the sequel, submanifolds of $\reals^d$ will always be smooth embedded submanfolds of $\reals^d$ (but we do not assume that they are compact or closed) and we will refer to a codimension $1$ submanifold of $\reals^d$ as a hypersurface. A submanifold $M$ of $\reals^d$ comes equipped with the induced measure $\mu$ associated to the Riemannian structure on $M$ induced by the standard Euclidean structure on $\reals^d$. To the sumset $nM\subset\reals^d$ we associate the $n$-fold convolution measure $\mu *\cdots *\mu$ which is given by \[\mu *\cdots *\mu(E)=\mu^n(\{(x_1,\ldots,x_n)\in M^n\,\,|\,\,x_1+\cdots+x_n\in E\}).\]

For $d\ge 3$, a \emph{cylinder} is defined to be a hypersurface which is, up to rigid motions, an open subset of $\Gamma\times\mathbb{R}^{d-2}$ for some smooth embedded plane curve $\Gamma\subset\mathbb{R}^2$. 

Let $M$ be a hypersurface with induced measure $\mu$. The main result of this paper is that, unless $M$ is flat somewhere or contains a cylinder, whenever the non-vanishing measurable functions $f$, $g$, $h$ satisfy (\ref{Cauchy}) for $\mu^2$-a.e. pair $(x,y)$, it follows that $f$ and $g$ must be of the form $f(x) = c_1e^{v\cdot x}$, $g(x) = c_2e^{v\cdot x}$ $\mu$-almost everywhere and, furthermore, $c_1$, $c_2$, $v$ are uniquely determined.

If $M$ is orientable, let $\mathcal{G}=\mathcal{G}^M:M\to\mathbb{S}^{d-1}$ denote the smooth Gauss normal map, where $\mathbb{S}^{d-1}$ is the unit sphere in $\mathbb{R}^d$. Observe that for any hypersurface $M$, whether the linear map $(d\mathcal{G}^U)_x$ is zero for a point $x\in M$ and some orientable open neighbourhood $U\subset M$ of $x$ is independent of the choice of $U$.

\begin{definition}
A hypersurface $M$ is \emph{nowhere-flat} if for every $x\in M$ there exists an orientable open neighbourhood $U$ of $x$ such that ($d\mathcal{G}^U)_x$ is non-zero. $M$ is \emph{cylinder-free} if it does not contain a cylinder.
\end{definition}

Note that $M$ is nowhere-flat if and only if at each point $x\in M$ at least one of the principal curvatures is non-zero. Hypersurfaces in $\reals^d$ for $d\ge 3$ which are nowhere-flat and cylinder-free include the cone $\{(x,t)\in\reals^{d-1}\times\reals\,\,|\,\,t=\norm{x},\, t>0\}$ and any hypersurface of non-vanishing Gaussian curvature.

\begin{theorem}\label{theorem:mainmult}
Let $d\ge 3$. Let $M\subset \reals^d$ be a connected hypersurface with induced measure $\mu$ which is nowhere-flat and cylinder-free. Suppose that $f,g:M\to\complex$ and $h:2M\to\complex$ are measurable functions satisfying \[f(x)g(y)=h(x+y) \text{ for }\mu^2\text{-almost every } (x,y)\in M^2.\] Suppose further that $f^{-1}(\{0\})$ and $g^{-1}(\{0\})$ are $\mu$-null. Then there exist unique $v\in\complex^d$ and $c_1,c_2\in\complex\backslash\{0\}$ such that $f(x)=c_1\exp(v\cdot x)$, $g(x)=c_2\exp(v\cdot x)$ for $\mu$-almost every $x\in M$.
\end{theorem}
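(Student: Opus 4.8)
The plan is to pass to a smooth, local, logarithmic form of the equation, differentiate it twice along $M$, and then read off the conclusion from a linear-algebraic lemma in which the nowhere-flat and cylinder-free hypotheses enter essentially. \textbf{Step 1 (regularity and reduction).} Since $d\ge 3$ we have $\dim M^2=2d-2>d\ge\dim 2M$, and at any pair $(x,y)$ with $T_xM\ne T_yM$ the addition map $\Sigma(x,y)=x+y$ is a submersion onto an open subset of $\reals^d$ with positive-dimensional fibres; by nowhere-flatness and connectedness such pairs have full $\mu^2$-measure, so $2M$ has non-empty interior. Using this together with the identity $h(x+y)=f(x)g(y)$ and Fubini — writing $h$ locally as a fibre average of $f(x)g(y)$, which the cylinder-free hypothesis prevents from being constant along the fibres — one shows that $f,g,h$ coincide $\mu$- (resp.\ Lebesgue-) almost everywhere with smooth functions, for which the functional equation then holds everywhere. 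Replacing $f,g,h$ by these representatives, fix $p\in M$ and a connected, simply connected, orientable patch $U\ni p$ on which $f,g$ are nowhere zero, and an open set $W\subset\reals^d$ with $W\supset (2U)\cap(\operatorname{dom}h)$; choosing smooth branches $A=\log f,\ B=\log g$ on $U$ and $C=\log h$ on $W$ we obtain
\[A(x)+B(y)=C(x+y)\qquad\text{for all }x,y\in U\text{ with }x+y\in W.\]

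\textbf{Step 2 (differentiating).} Differentiating along $M$ in the first variable gives $dA_x(\xi)=dC_{x+y}(\xi)$ for $\xi\in T_xM$; since the left side does not depend on $y$, differentiating once more along $M$ in the second variable (with $\xi$ frozen) yields
\[\operatorname{Hess}C_{x+y}(\xi,\eta)=0\qquad\text{for all }\xi\in T_xM,\ \eta\in T_yM,\]
for all admissible $(x,y)$. Equivalently, for each $z$ in an open subset of $2M$ the symmetric bilinear form $Q_z:=\operatorname{Hess}C_z$ on $\reals^d$ is annihilated by $T_xM\times T_{z-x}M$ for every representation $z=x+y$ with $x,y$ near $p$. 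If we can conclude $Q_z\equiv 0$, then $C$ is affine near $z$, hence $A$ and $B$ are affine on $U$, and $f,g$ agree on $U$ with exponential-affine functions.

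\textbf{Step 3 (the geometric core — the main obstacle).} It remains to prove the lemma: if $M$ is nowhere-flat and cylinder-free and a symmetric bilinear form $Q$ on $\reals^d$ satisfies $Q(T_xM,T_{z-x}M)=0$ for all $x$ in an open subset of $M$ (with $z-x\in M$), then $Q=0$. This is sharp: on a cylinder the analogous $Q$ need not vanish — for the parabolic cylinder of the introduction one checks directly that $\operatorname{Hess}(\log h)$ is a nonzero form killed by all the relevant tangent-plane pairs — which is exactly why the theorem fails there. I expect the proof to analyze, at points where $M$ is not flat, how $T_xM$ and $T_{z-x}M$ move to first order: the relation $Q(T_xM,T_{z-x}M)=0$ and its derivative along $M$ should force the radical of $Q$ together with suitable common kernels of the two shape operators to contain a fixed $(d-2)$-plane along which $M$ is ruled with parallel tangent planes, i.e.\ a cylinder, unless $Q$ was zero. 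Making this dimension count and the parallelism argument precise, using nowhere-flatness to guarantee that the tangent planes genuinely vary and cylinder-freeness to rule out the degenerate configuration, and handling the low-dimensional cases $d=3,4$ by exploiting the whole $(d-2)$-parameter family of decompositions $z=x+y$, is the heart of the matter.

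\textbf{Step 4 (globalization and uniqueness).} By Step 3 every $p\in M$ has a neighbourhood on which $f(x)=c_1(p)e^{v(p)\cdot x}$ and $g(x)=c_2(p)e^{v(p)\cdot x}$ $\mu$-a.e. On the (open, hence positive-measure) overlap of two such neighbourhoods the two exponential-affine formulas for $f$ agree; since a nowhere-flat $M$ meets no affine hyperplane in a set of positive $\mu$-measure, the exponents and constants coincide. Connectedness of $M$ then produces a single triple $(v,c_1,c_2)$ valid $\mu$-a.e.\ on $M$, and the same non-degeneracy gives uniqueness; finally $c_1,c_2\neq 0$ because $f^{-1}(\{0\})$ and $g^{-1}(\{0\})$ are $\mu$-null.
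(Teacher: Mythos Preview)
Your Step~3 is the genuine gap: you state the key lemma but do not prove it, and you say so yourself (``I expect the proof to analyze\ldots''; ``Making this\ldots\ precise\ldots\ is the heart of the matter''). Its statement is also slightly off --- for a fixed $z$ the admissible $x$ range only over (an open piece of) the codimension-two slice $M_z=M\cap(z-M)$, not over an open subset of $M$. With the correct hypothesis the argument is not routine. The condition $Q(T_xM,T_{z-x}M)=0$ reads $Q(\mathcal{G}(x)^\perp)\subset\operatorname{span}\{\mathcal{G}(z-x)\}$, which immediately gives $\operatorname{rank}Q\le 2$; and $\operatorname{rank}Q=2$ forces $\ker Q\subset T_xM$, hence $\mathcal{G}(x)\in(\ker Q)^\perp=\operatorname{Im}Q$ for every $x$ in the slice, so $\mathcal{G}(M_z)$ lies in a fixed $2$-plane, i.e.\ a planar circle. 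Ruling that out for a dense set of $z$ is precisely the paper's Lemma~\ref{lemma:Ddense}, whose proof is where nowhere-flatness and cylinder-freeness do their real work; the rank-one case $Q=\pm v\otimes v$ still needs a further argument (for each $x$ one only gets $\mathcal{G}(x)\parallel v$ \emph{or} $\mathcal{G}(z-x)\parallel v$, and eliminating this requires varying $z$ and invoking cylinder-freeness again). Step~1 is also underspecified: the cylinder-free hypothesis has nothing to do with the regularity bootstrap you sketch, and upgrading measurable $f,g,h$ to smooth representatives satisfying the equation everywhere takes a genuine Lusin/averaging argument you have not supplied.

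For comparison, the paper takes a different route that sidesteps both issues. Rather than pass to logarithms and differentiate, it uses an eight-point ``regrouping'' identity (Lemma~\ref{lemma:regroup}) on $\mathcal{P}_M=(M^4\times M^4)\cap\Pi$ to manufacture, directly from the measurable hypothesis, a classical Cauchy--Pexider relation $h(w_1)h(w_2)=H(w_1+w_2)$ on an honest ball in $\reals^d$, and then invokes Lemma~\ref{lemma:classical}. The geometric input is Proposition~\ref{prop:gammasurjective} (the map $(x,y)\mapsto x_1+y_1$ from regular configurations onto $R_M$ is a surjective submersion), which again rests on Lemma~\ref{lemma:Ddense}; so the two approaches share the same geometric heart but package it differently. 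The paper's packaging never needs smoothness of $f,g,h$, which is what makes the approximate Theorem~\ref{theorem:appmain} go through --- a stability statement your differentiation strategy would not obviously deliver even once completed.
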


The proof will also establish the analogous result for the additive Cauchy-Pexider functional equation.

\begin{theorem}\label{theorem:mainadd}
Let $d\ge 3$. Let $M\subset \reals^d$ be a connected hypersurface with induced measure $\mu$ which is nowhere-flat and cylinder-free. Suppose that $f,g:M\to\complex$ and $h:2M\to\complex$ are measurable functions satisfying \[f(x)+g(y)=h(x+y) \text{ for }\mu^2\text{-almost every } (x,y)\in M^2.\] Then there exist unique $v\in\complex^d$ and $c_1,c_2\in\complex$ such that $f(x)=v\cdot x + c_1$, $g(x)=v\cdot x + c_2$ for $\mu$-almost every $x\in M$.
\end{theorem}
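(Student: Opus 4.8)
The plan is to prove Theorem~\ref{theorem:mainadd} by the same argument that yields Theorem~\ref{theorem:mainmult}; indeed, once the regularity reduction below is in place, the multiplicative equation is reduced to the additive one by passing to continuous local logarithms of $f,g,h$, so it suffices to describe the additive case. Because $M$ is nowhere-flat its Gauss map is non-constant on every open subset, so $\{(x,y)\in M^2:T_xM\ne T_yM\}$ is open and dense, and on it the addition map $\mathfrak{a}\colon(x,y)\mapsto x+y$ is a submersion onto $\reals^d$ (since $T_xM+T_yM=\reals^d$ as soon as the two tangent hyperplanes differ); in particular $2M$ has non-empty interior. Next comes a regularity bootstrap. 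From the almost-everywhere identities $f(x)-f(x')=h(x+y)-h(x'+y)$ and $h(x+y)=f(x)+g(y)$ one writes $f$, and then $h$, as convolutions of a priori merely $L^1_{\mathrm{loc}}$ data against smooth densities carried on $M$; since $M$ is curved, iterated convolutions of such densities are absolutely continuous with progressively smoother kernels, and composing the two relations forces $f$ (hence $g$, $h$) to agree almost everywhere with a $C^\infty$ function on a suitable open set. Reading $f(x)=h(x+y_0)-g(y_0)$ then lets one regard $f,g$ as smooth functions on open subsets of $\reals^d$, and a continuity argument (the identity holds on a set of full measure and both sides are continuous) promotes the equation to an honest identity $f(x)+g(y)=h(x+y)$ on the relevant open set.

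The differential-geometric core is extracted as follows. Working in a local parametrization $\phi$ of $M$, differentiate $f(\phi(s))+g(\phi(t))=h(\phi(s)+\phi(t))$ once in $s$ and once in $t$: the left-hand side disappears and one is left with the matrix identity
\[ D\phi(s)^{\top}\,(D^2h)\bigl(\phi(s)+\phi(t)\bigr)\,D\phi(t)=0, \]
equivalently $\langle (D^2h)(x+y)\,\xi,\eta\rangle=0$ for all $\xi\in T_xM$, $\eta\in T_yM$, valid for $(x,y)$ ranging over an open subset of $M^2$. Everything now reduces to exploiting this single relation.

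Fix $p$ in the relevant open part of $2M$ and a decomposition $p=x+y$ with $T_xM\ne T_yM$, and move $(x,y)$ along the fibre $\mathfrak{a}^{-1}(p)$, so that $\dot x=-\dot y$; inserting $\xi=\dot x$ and separately $\eta=-\dot x$ into the identity forces $(D^2h)(p)\dot x\perp T_xM$ and $(D^2h)(p)\dot x\perp T_yM$, whence $(D^2h)(p)\dot x=0$ because the two normals are independent. Thus $(D^2h)(p)$ annihilates every direction tangent to $M$ obtained from a fibre motion through $p$, and---using also that $(D^2h)(p)$ maps $T_xM$ into the line $(T_yM)^{\perp}$---a short linear-algebra chase shows that either these constraints over-determine $(D^2h)(p)$ and force $(D^2h)(p)=0$, or else the Gauss map of $M$ is constant along the slice $M\cap(p-M)$. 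If the first alternative holds on a dense subset of $\mathrm{int}(2M)$, then $D^2h\equiv0$ there by continuity, so $h$ is affine; consequently $f(x)=h(x+y_0)-g(y_0)$ and $g(y)=h(x_0+y)-f(x_0)$ are restrictions to $M$ of one and the same affine map $z\mapsto v\cdot z+\mathrm{const}$, giving the stated form of $f$ and $g$ with a common $v$, and uniqueness follows since a nowhere-flat $M$ is contained in no affine hyperplane. It remains to rule out the second alternative on every open set, that is, to show that if the Gauss map of $M$ were constant along all slices $M\cap(p-M)$ for $p$ in an open set, then $M$ would be flat somewhere or would contain a cylinder---contrary to hypothesis.

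The main obstacle is precisely this last step: showing that the seemingly weak degeneracy ``the Gauss map is constant along every slice $M\cap(p-M)$, $p$ in an open set'' already forces a flat point or an embedded affine cylinder. One has to eliminate the genuine competitors permitted by the theorem when $d\ge3$---cones, and ruled or developable pieces more generally---and to align the degeneracy exactly with the definitions of \emph{nowhere-flat} and \emph{cylinder-free}; this is where both hypotheses are consumed, and it is the real geometric content. A subsidiary difficulty is the regularity bootstrap itself: since $h$ is defined only on $2M$, whose interior may behave badly near its boundary, the mollifications must be carried out on carefully chosen relatively compact coordinate patches, and the upgrade from an almost-everywhere identity to an everywhere identity must be propagated along paths in the connected manifold $M$.
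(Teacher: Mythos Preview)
Your route is genuinely different from the paper's. The paper never bootstraps regularity or differentiates: it uses an algebraic regrouping identity (Lemma~2.4) to show that $h(z_1)+h(z_2)=h(z_3)+h(z_4)$ whenever $z$ lies in the image of a certain submersion $\pi_M$ defined on a configuration space $\mathcal{S}_M\subset M^8$, then proves that this submersion hits enough of the target (Proposition~2.6) that the classical additive Cauchy--Pexider result on balls of $\reals^d$ applies directly to $h$. Your approach---upgrade to smoothness, take the mixed second derivative, analyse the Hessian $D^2h(p)$---is conceptually cleaner but pays for it with the regularity bootstrap. Interestingly, both routes funnel into the same geometric fact: generically in $p$, the Gauss image $\mathcal{G}(M\cap(p-M))$ is not contained in a planar circle (the paper's Lemma~3.2).

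There is, however, a genuine gap in your dichotomy. From $\langle D^2h(p)\xi,\eta\rangle=0$ for $\xi\in T_xM$, $\eta\in T_yM$ you correctly get that $A:=D^2h(p)$ kills $T_xM\cap T_yM$ and has $\operatorname{image}(A)\subset\operatorname{span}\{\mathcal{G}(x),\mathcal{G}(y)\}$. But if $\operatorname{rank}A=2$, varying $(x,y)$ along the fibre only pins $\operatorname{span}\{\mathcal{G}(x),\mathcal{G}(y)\}$ to the fixed $2$--plane $\operatorname{image}(A)$; it does \emph{not} force $\mathcal{G}$ to be constant along $M\cap(p-M)$, only to take values in a fixed planar circle. (The ``constant'' conclusion arises only in the rank--one case.) So the degeneracy you must rule out is the weaker one---$\mathcal{G}(M_p)$ lies in a planar circle for every $p$ in an open set---and this is exactly what you have not done: you flag it as ``the main obstacle'' and stop. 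That obstacle is the entire content of the paper's Lemma~3.2 (density of $D_M$), whose proof is where the nowhere-flat and cylinder-free hypotheses are actually used and is not short. Your regularity bootstrap is also only sketched; the paper sidesteps it entirely, and if you want to keep your approach you will need a careful argument (closer in spirit to the paper's \S5 for curves) to pass from measurable to $C^2$.
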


In \S\ref{section:app}, the following approximate versions of these theorems are proved. 

\begin{theorem}\label{theorem:appmain}
Let $d\ge 3$. Let $M\subset\reals^d$ be a bounded hypersurface with finite induced measure $\mu$ which is nowhere-flat and cylinder-free. Given $\epsilon>0$ there exists $\delta>0$ with the following property. Whenever $f,g:M\to\complex$ are measurable functions vanishing only on a $\mu$-null set and $h:2M\to\complex$ is a measurable function satisfying
\begin{equation}
\mu^2(\{(x,y)\in M^2\,\,|\,\,|f(x)g(y)h(x+y)^{-1}-1|>\delta\})<\delta
\end{equation}
it follows that there exist $c\in\complex$ and $v\in\complex^d$ such that \[\mu(\{|f(x)(c\exp(v\cdot x))^{-1}-1|>\epsilon\})<\epsilon.\]
\end{theorem}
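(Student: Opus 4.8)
The plan is to argue by contradiction and compactness, using Theorem~\ref{theorem:mainmult} as the rigidity input. Suppose the assertion fails for some $\epsilon>0$; then there is a sequence $\delta_n\downarrow 0$ and, for each $n$, measurable non-vanishing functions $f_n,g_n\colon M\to\complex$ and a measurable $h_n\colon 2M\to\complex$ with $\mu^2(\{(x,y):|f_n(x)g_n(y)h_n(x+y)^{-1}-1|>\delta_n\})<\delta_n$, while $\mu(\{x:|f_n(x)(c\exp(v\cdot x))^{-1}-1|>\epsilon\})\ge\epsilon$ for \emph{every} $c\in\complex\setminus\{0\}$ and every $v\in\complex^d$. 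The hypothesis and the negated conclusion are both unchanged under the substitutions $f_n(x)\mapsto\gamma\,f_n(x)\exp(-w\cdot x)$, $g_n(y)\mapsto\lambda\,g_n(y)\exp(-w\cdot y)$, $h_n(z)\mapsto\gamma\lambda\,h_n(z)\exp(-w\cdot z)$ with $\gamma,\lambda\in\complex\setminus\{0\}$ and $w\in\complex^d$; this normalization freedom is used freely below, and in particular we may assume $|f_n|$ has median $1$.

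Next I would symmetrize: interchanging $x$ and $y$ and dividing the two near-identities shows that $f_n(x)g_n(y)(f_n(y)g_n(x))^{-1}$ lies within $O(\delta_n)$ of $1$ off a set of $\mu^2$-measure $O(\delta_n)$, so by Fubini $f_n/g_n$ is concentrated, up to a factor $1+o(1)$, near a single value $\lambda_n$; absorbing $\lambda_n$ into the substitution above we may assume $g_n=f_n$ up to a factor $1+o(1)$ off a set of $\mu$-measure $o(1)$, hence $f_n(x)f_n(y)=h_n(x+y)(1+o(1))$ off a set of $\mu^2$-measure $o(1)$. The task is then to pass to a subsequence along which $f_n\to f$ in $\mu$-measure and $h_n\to h$ in the convolution measure, with $f,h$ measurable, $f$ non-vanishing $\mu$-a.e., and $f(x)f(y)=h(x+y)$ for $\mu^2$-a.e.\ $(x,y)$.

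This compactness step is the main obstacle, since $f_n$ is merely measurable and non-vanishing: one must exclude escape of $|f_n|$ to $0$ or $\infty$, escape of a ``linear part'' of $f_n$, and fast oscillation of $f_n$ on the bounded region $M$. An escaping linear part is excluded at once by the negated conclusion, since such an $f_n$ would itself be an exponential up to a small error. For the remaining possibilities one uses that, for $d\ge3$, the addition map $M\times M\to 2M$ has fibres of dimension at least $d-2\ge1$, so for a.e.\ $z\in 2M$ one has $h_n(z)=f_n(x)f_n(y)(1+o(1))$ for a positive-measure set of pairs $(x,y)$ lying over $z$; this consistency, together with the median normalization, forces $f_n$ to vary in a controlled way along generic lower-dimensional slices of $M$, and here nowhere-flatness and cylinder-freeness enter exactly as in Theorem~\ref{theorem:mainmult} --- the examples of the Introduction show that on flat pieces and on cylinders the conclusion is false. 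Making this quantitative --- via Fubini and approximate-median estimates along the fibres, and disposing of the locally constant $2\pi i\integers$-ambiguity of a measurable branch of $\log f_n$ using connectedness of $M$ and of $2M$ --- is the heart of the argument and yields the required tightness and compactness in measure.

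Passing to the limit, $f(x)f(y)=h(x+y)$ holds $\mu^2$-a.e.; the median normalization keeps $f$ from vanishing identically, and a short sumset argument (if $f$ vanished on a positive-measure set, then $h$ would be forced to vanish on a subset of $2M$ on which it also cannot vanish) shows that $f^{-1}(\{0\})$ is $\mu$-null. Theorem~\ref{theorem:mainmult}, applied with $g=f$, then provides $c\in\complex\setminus\{0\}$ and $v\in\complex^d$ with $f(x)=c\exp(v\cdot x)$ for $\mu$-a.e.\ $x$. Since $f_n\to f$ in $\mu$-measure, for all large $n$ we would have $\mu(\{x:|f_n(x)(c\exp(v\cdot x))^{-1}-1|>\epsilon\})<\epsilon$, contradicting the negated conclusion for this $c$ and $v$ and completing the proof. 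The same argument shows that the admissible $\delta$ depends only on $\epsilon$ and on the hypersurface $M$.
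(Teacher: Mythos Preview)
Your proposal is a genuinely different strategy from the paper's: you argue by contradiction and compactness, hoping to pass to a limit and invoke the exact Theorem~\ref{theorem:mainmult}, whereas the paper gives a direct, constructive argument that tracks the proof of Theorem~\ref{theorem:mainmult} with null sets replaced by sets of small measure. Concretely, the paper shows (via Proposition~\ref{prop:gammasurjective} and the approximate Lemma~\ref{lemma:classical}) that for each $z\in R_M$ there is a ball $T_z$ on which $h$ agrees with an exponential $c_z\exp(v_z\cdot w)$ up to $o_z(1)$ off a set of measure $o_z(1)$; it then proves an approximate rigidity lemma (Lemma~\ref{lemma:appaffine}) forcing the local parameters $(c_z,v_z)$ to nearly agree on overlaps, and finishes by covering all but $\tau$ of $M$ by finitely many such neighbourhoods. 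No limiting procedure is used.

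The gap in your plan is the compactness step, which you yourself flag as ``the heart of the argument'' but do not carry out. After your normalizations the sequence $f_n$ is still an arbitrary sequence of nonvanishing measurable functions on $M$; you need precompactness in measure, and nothing in the hypotheses supplies it directly. Your proposed mechanism --- that $h_n(z)\approx f_n(x)f_n(y)$ for many $(x,y)$ in the fibre over $z$, hence $f_n$ is ``controlled along slices'' --- is exactly the consistency that the paper exploits, but turning it into an honest compactness statement would require quantifying how the local exponential behaviour of $h_n$ on balls in $R_M$ propagates to $f_n$, i.e.\ essentially reproving the constructive argument. Put differently: once the theorem is known, the $f_n$ are close to exponentials and hence (after normalizing by those exponentials) precompact; but you cannot assume this in the middle of the proof. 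The negated conclusion rules out $f_n$ being close to \emph{some} exponential, but does not by itself prevent $|f_n|$ from wandering or $\arg f_n$ from oscillating in a manner incompatible with any single limit. A secondary loose end is the ``short sumset argument'' that the limit $f$ is a.e.\ nonzero: this needs the fibre structure of $M+M$ in a nontrivial way (for a generic $z$ the fibre must be either a.e.\ in $\{f=0\}\cup(z-\{f=0\})$ or a.e.\ in its complement), and is not as immediate as you suggest.
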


\begin{theorem}
Let $d\ge 3$. Let $M\subset\reals^d$ be a bounded hypersurface with finite induced measure $\mu$ which is nowhere-flat and cylinder-free. Given $\epsilon>0$ there exists $\delta>0$ with the following property. Whenever $f,g:M\to\complex$, $h:2M\to\complex$ are measurable functions satisfying
\begin{equation}
\mu^2(\{(x,y)\in M^2\,\,|\,\,|f(x)+g(y)-h(x+y)|>\delta\})<\delta
\end{equation}
it follows that there exist $c\in\complex$ and $v\in\complex^d$ such that \[\mu(\{|f(x)-(v\cdot x + c)|>\epsilon\})<\epsilon.\]
\end{theorem}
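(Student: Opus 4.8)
The natural route, following the pattern of Theorem~\ref{theorem:appmain}, is a compactness-and-contradiction reduction to the exact rigidity result, Theorem~\ref{theorem:mainadd}. Suppose the assertion fails for some $\epsilon_0>0$. Then there exist $\delta_n\downarrow 0$ and measurable $f_n,g_n:M\to\complex$, $h_n:2M\to\complex$ with
\[\mu^2(E_n)<\delta_n, \qquad E_n:=\{(x,y)\in M^2\,\,|\,\,|f_n(x)+g_n(y)-h_n(x+y)|>\delta_n\},\]
yet $\mu(\{x\in M\,\,|\,\,|f_n(x)-(v\cdot x+c)|>\epsilon_0\})\ge\epsilon_0$ for every $v\in\complex^d$ and every $c\in\complex$. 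The plan is to pass to a subsequence along which $f_n\to f$, $g_n\to g$ in $\mu$-measure and $h_n\to h$ in measure on compact subsets of the interior $\Omega$ of $2M$ (nonempty since $M$ is nowhere-flat, so that some pair of points of $M$ has non-parallel normals and the addition map is a submersion there), in such a way that $f(x)+g(y)=h(x+y)$ holds for $\mu^2$-a.e.\ $(x,y)$; Theorem~\ref{theorem:mainadd} then forces $f$ to be affine $\mu$-a.e., and since $f_n\to f$ in $\mu$-measure this contradicts the assumed separation of the $f_n$ from affine functions.

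Two preparatory steps make the extraction feasible. First, the hypothesis and the negated conclusion are both invariant under the affine ambiguity of the equation, namely under $(f_n,g_n,h_n)\mapsto(f_n+v\cdot x+c_1,\,g_n+v\cdot x+c_2,\,h_n+v\cdot z+c_1+c_2)$. Second, by Fubini the bad set $E_n$ has $\mu$-small slices off a $\mu$-small set of $y$, so after fixing good base points $x_0,y_0\in M$ one may replace $f_n(x)$ by the slice $h_n(x+y_0)-g_n(y_0)$ and $g_n(y)$ by $h_n(x_0+y)-f_n(x_0)$: these agree with $f_n,g_n$ up to $O(\delta_n)$ outside a set of $\mu$-measure $O(\sqrt{\delta_n})$, so the negated separation passes to the slices (with $\epsilon_0$ replaced by $\epsilon_0/2$, for all large $n$) and the problem is reduced to controlling the single function $h_n$ on $\Omega$, where it approximately satisfies the mixed-second-difference relation $h_n(x+y)+h_n(x_0+y_0)\approx h_n(x+y_0)+h_n(x_0+y)$. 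Moreover the equation itself furnishes a regularization of $h_n$: convolving $f_n(x')+g_n(y')$ against a test measure carried by $M$ and using the approximate equation twice expresses $h_n$ on compacta of $\Omega$, up to an $L^1_{\mathrm{loc}}$-error tending to $0$, by a mollification got from a curved piece of $M$, and the affine indeterminacy above is then spent to normalize that mollification.

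The principal obstacle is precompactness: one must show that, after this normalization, $h_n$ is precompact in measure on each compact subset of $\Omega$ (equivalently, that the normalized $h_n$ do not escape to infinity in $L^2_{\mathrm{loc}}$). This is exactly the point at which the proof of Theorem~\ref{theorem:mainadd} must be revisited quantitatively. That proof shows that the functional equation regularizes $h$ — the convolution against a test measure on $M$ smooths by a positive amount because $M$ is nowhere-flat — and then, differentiating the resulting smooth function, that its Hessian is annihilated on all pairs $(T_xM,T_yM)$ of tangent planes along decompositions $z=x+y$, whence nowhere-flatness together with cylinder-freeness forces that Hessian to vanish and $h$ to be affine on $\Omega$. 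Carried out with an error term, the regularizing step yields, up to an error tending to $0$, a uniform bound on the mollification of $h_n$ over compacta of $\Omega$ together with equicontinuity of its translates, hence precompactness in measure by the Fr\'echet--Kolmogorov criterion. The delicate point, and the heart of the matter, is that $h_n$ is a priori merely measurable, so one must interpose a truncation whose level is allowed to grow with $n$ but slowly enough to leave both the curvature smoothing and the estimates intact; this is where the genuine new work lies.

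Granting precompactness, pass to a subsequence with $h_n\to h$ in measure on each compact subset of $\Omega$; by the slice identities $f_n\to f$ and $g_n\to g$ in $\mu$-measure. Because addition pushes $\mu\times\mu$ forward to the convolution measure $\mu*\mu$, which is absolutely continuous with respect to Lebesgue measure on $\Omega$ (addition being a submersion there), and because $2M\setminus\Omega$ is Lebesgue-null by Sard's theorem applied to the addition map $M\times M\to\reals^d$, one has $h_n(x+y)\to h(x+y)$ in $\mu^2$-measure on the $\mu^2$-full set $\{(x,y)\in M^2\,\,|\,\,x+y\in\Omega\}$; combining this with $f_n(x)+g_n(y)\to f(x)+g(y)$ in $\mu^2$-measure and with $\mu^2(E_n)\to 0$, the approximate equation passes to the limit to give $f(x)+g(y)=h(x+y)$ for $\mu^2$-a.e.\ $(x,y)\in M^2$ (extend $h$ arbitrarily off $\Omega$). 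By Theorem~\ref{theorem:mainadd} there are $v\in\complex^d$ and $c\in\complex$ with $f(x)=v\cdot x+c$ for $\mu$-a.e.\ $x$ (if $M$ is disconnected, the same limiting argument applies on the components, the cross-component equations forcing a common $v$); but then, since $f_n\to f$ in $\mu$-measure, $\mu(\{x\,\,|\,\,|f_n(x)-(v\cdot x+c)|>\epsilon_0\})<\epsilon_0$ for all large $n$, contradicting the choice of the $f_n$. Thus the argument succeeds modulo the precompactness step; an alternative would be to carry out the entire proof of Theorem~\ref{theorem:mainadd} with explicit error bounds throughout, but the reduction above isolates the one estimate that must genuinely be done by hand.
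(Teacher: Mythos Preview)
Your proposal is not the paper's route, and it contains a gap you yourself flag but do not close. The paper does \emph{not} prove Theorem~\ref{theorem:appmain} (and hence the additive companion) by compactness--contradiction and reduction to the exact result. Instead, it reruns the structural machinery of \S\ref{section:strategy} with error terms carried through directly: the surjectivity of $\gamma_M$ (Proposition~\ref{prop:gammasurjective}) is a purely geometric statement and needs no modification, and the regrouping identity of Lemma~\ref{lemma:regroup} becomes an approximate identity on $\mathcal{R}_M$; pushing forward under $\pi_M$ one obtains, for each $z\in R_M$, balls $T_z$, $B_z$ and a measurable $H_z^\delta$ such that $h(w_1)+h(w_2)-H_z^\delta(w_1+w_2)$ is small off a small set in $T_z\times B_z$. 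The additive form of Lemma~\ref{lemma:classical} then gives $h$ close to an affine function on each $T_z$, and an approximate localisation lemma (the additive analogue of Lemma~\ref{lemma:appaffine}) forces nearby affine pieces to have nearly equal slope and intercept. A finite cover $\{\mathcal{N}_{z_j}\}_{j=1}^n$ capturing all but $\tau$ of the $\mu$-mass of $M$ then lets one patch to a single affine approximant for $f$. Choosing $\tau$ and then $\delta$ small yields the claim.

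The substantive problem with your approach is exactly the one you isolate: precompactness of $(f_n,g_n,h_n)$ in measure. The affine normalisation spends only $d+1$ real parameters and cannot by itself tame an arbitrary measurable sequence; the mollification-and-truncation sketch you offer is suggestive, but the step ``curvature smoothing yields, up to an error tending to $0$, a uniform bound on the mollification of $h_n$'' is precisely an a priori quantitative estimate of the sort one is trying to prove, and you do not supply it. In fact, making that step rigorous amounts to redoing the paper's direct argument locally (i.e.\ invoking the approximate Cauchy--Pexider lemma on balls after the regrouping), at which point the compactness wrapper is superfluous. So as written the proposal is a plausible outline with a real missing idea, not a proof; the paper's direct route sidesteps the difficulty entirely.
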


The final sections deal with related restricted functional equations. 

As in the two-dimensional example considered above, for a generic embedded curve $\Gamma\subset\reals^d$ each $z\in d\Gamma$ can be written as \[z=x_1+\ldots+ x_d\] where $x_1,\ldots,x_d\in\Gamma$ and $x_1,\ldots,x_d$ are unique up to order. Thus the $d$-fold version of the Cauchy-Pexider equation restricted to curves will not force solutions to be exponential affine, in general. On the other hand, for a suitably non-degenerate curve, the $(d+1)$-fold version does; this is the content of the following theorems which are proved in \S\ref{section:curves}.

\begin{theorem}\label{theorem:curves}
Let $d\ge 2$. Let $\Gamma\subset \reals^d$ be a connected submanifold of dimension $1$ with induced measure $\gamma$. Suppose that no open subset of $\Gamma$ lies in an affine hyperplane. Suppose that $f_0,\ldots,f_{d}:\Gamma\to\complex$  and $F:(d+1)\Gamma\to\complex$ are measurable functions satisfying \[f_0(x_0)\cdots f_d(x_d)=F(x_0+\ldots+x_d) \text{ for }\gamma^{d+1}\text{-almost every } (x_0,\ldots,x_d)\in \Gamma^{d+1}.\] Suppose further that for each $0\le j\le d$, the set $f_j^{-1}(\{0\})$ is $\gamma$-null. Then there exist unique $v\in\complex^d$ and $c_0,\ldots,c_d\in\complex$ such that for each $0\le j\le d$, $f_j(x)=c_j\exp(v\cdot x)$ for $\gamma$-almost every $x\in \Gamma$.
\end{theorem}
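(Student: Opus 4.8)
The plan is to bootstrap from local smoothness to a global functional identity, then exploit the nondegeneracy of $\Gamma$. First I would show that, after discarding null sets, the measurable solution is locally a smooth (indeed real-analytic) function. The standard device: fix generic points $\bar x_1,\ldots,\bar x_d$ on $\Gamma$ for which the equation holds for a.e.\ $x_0$; then $f_0(x_0)$ agrees a.e.\ with $F(x_0+\bar x_1+\cdots+\bar x_d)/(f_1(\bar x_1)\cdots f_d(\bar x_d))$, and by averaging $\bar x_1,\ldots,\bar x_d$ over small balls one writes $f_0$ (a.e.) as a superposition of translates of (a smoothed version of) $F$. Iterating this "division" argument between the $f_j$'s and $F$, together with a Steinhaus-type argument on the parametrization of $\Gamma$, upgrades each $f_j$ to a continuous, then $C^\infty$, function on $\Gamma$, and $F$ to a $C^\infty$ function on an open portion of $(d+1)\Gamma$; from here on the equation $f_0(x_0)\cdots f_d(x_d)=F(x_0+\cdots+x_d)$ holds pointwise on an open subset of $\Gamma^{d+1}$. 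It is harmless to assume each $f_j>0$ on a small arc (the zero sets are null and $f_j$ is continuous there), so we may pass to logarithms: setting $\phi_j=\log f_j$ and $\Psi=\log F$, we get $\sum_{j=0}^d\phi_j(x_j)=\Psi(x_0+\cdots+x_d)$ on an open box in $\Gamma^{d+1}$.

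Next I would parametrize: let $\gamma(t)$ be a unit-speed (or just regular) parametrization of a small arc of $\Gamma$, and define $\psi_j(t)=\phi_j(\gamma(t))$, so that
\[
\sum_{j=0}^d \psi_j(t_j) = \Psi\!\left(\sum_{j=0}^d \gamma(t_j)\right)
\]
for $(t_0,\ldots,t_d)$ in an open cube $I^{d+1}$. The idea is to differentiate and kill off the unknown $\Psi$. Applying $\partial/\partial t_j$ gives $\psi_j'(t_j) = (\nabla\Psi)(\sum\gamma(t_k))\cdot\gamma'(t_j)$. Hence for any two indices $i\ne j$,
\[
\psi_i'(t_i)\,\gamma'(t_j) - \psi_j'(t_j)\,\gamma'(t_i) \ \parallel\ \text{lies in } \ker\big((\nabla\Psi)(\cdot)\big)^{\perp}\text{-relations},
\]
more usefully: the vector $(\nabla\Psi)(\sum\gamma(t_k))\in\complex^d$ is simultaneously determined by the $d+1$ scalar constraints $\psi_j'(t_j)=(\nabla\Psi)\cdot\gamma'(t_j)$. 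Since $\Gamma$ has no open arc in an affine hyperplane, a suitably non-degenerate choice of $d$ of the parameters makes the vectors $\gamma'(t_{j_1}),\ldots,\gamma'(t_{j_d})$ linearly independent (this is exactly where "no open subset lies in an affine hyperplane" is used, via iterated differentiation: $\gamma', \gamma'', \ldots$ span $\complex^d$ at generic points, and one can convert spanning of derivatives at a point into linear independence of $\gamma'$ at $d$ nearby distinct points). Then $(\nabla\Psi)(\sum\gamma(t_k))$ is a fixed smooth function of those $d$ parameters alone, so differentiating the remaining constraint in the last parameter forces a separation-of-variables identity. Concretely one derives that $\psi_j''(t)/\psi_j'(t)$-type quantities, or rather the components of $\nabla\Psi$ pulled back, are independent of enough variables that $\nabla\Psi$ must be (the pullback of) a \emph{constant} vector $v\in\complex^d$ on the relevant open set. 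Feeding $\nabla\Psi\equiv v$ back in gives $\psi_j'(t) = v\cdot\gamma'(t)$, i.e.\ $\phi_j(\gamma(t)) = v\cdot\gamma(t) + c_j$ locally, which is the claim $f_j(x)=c_j\exp(v\cdot x)$ on a small arc.

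The remaining steps are propagation and uniqueness. Connectedness of $\Gamma$ lets me extend the local conclusion: the set of points near which $f_j$ has the exponential-affine form with a \emph{common} $v$ is open; it is closed by continuity and by the fact that overlapping arcs must share the same $v$ (two exponentials $c e^{v\cdot x}$, $c' e^{v'\cdot x}$ agreeing on an arc not contained in a hyperplane force $v=v'$ — again the hypothesis on $\Gamma$); hence it is all of $\Gamma$. Uniqueness of $v$ and the $c_j$ is then immediate from the same non-degeneracy, and the relation $\prod c_j = F$-normalization pins down consistency with $F$ (though $F$ itself is not asserted unique beyond being determined a.e.\ on $(d+1)\Gamma$). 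The main obstacle I anticipate is the regularity bootstrap in the first paragraph: turning a merely measurable a.e.\ solution into a classical one, and in particular handling the geometry of the sumset $(d+1)\Gamma$ (showing the relevant "unique decomposition up to order" and that the convolution measure is nice enough for the Fubini/Steinhaus arguments) — this is the analytic heart, whereas the differentiation argument, while the conceptual core, is then essentially a linear-algebra computation once smoothness is in hand.
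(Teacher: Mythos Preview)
Your proposal is correct and follows essentially the same architecture as the paper's proof: a regularity bootstrap (measurable $\to$ continuous $\to$ differentiable) via averaging, followed by differentiation of the functional equation and use of the spanning property of $\{\dot\gamma(u_1),\ldots,\dot\gamma(u_d)\}$ implied by the hypothesis, then propagation by connectedness. Two points of comparison are worth noting. First, the paper's differentiation step is more economical than yours: rather than taking logarithms, differentiating in all $d+1$ parameters, and arguing that the resulting over-determined linear system forces $\nabla\Psi$ to be constant, the paper differentiates the multiplicative equation in $x_0$ only and then \emph{evaluates} at $d$ fixed points $v_1,\ldots,v_d$ with $\dot\gamma(v_j)$ independent, obtaining directly the constant-coefficient system $\partial_{\dot\gamma(v_j)}G = (g_0'(v_j)/g_0(v_j))\,G$, whence $G$ is exponential on an open set in one stroke. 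Second, for the regularity step the paper does not use a Steinhaus-type argument; it first establishes local \emph{boundedness} of each $f_j$ via two applications of Lusin's theorem (one to $\prod_{j\ge1} f_j$ on a box, one to $F$ on a translate of the image), and only then integrates to get continuity and invokes Leibniz for differentiability. Your averaging description captures the latter half of this, but the Lusin step for boundedness is what actually makes the integrals in (\ref{eqncont}) converge, and is the part you flagged as the ``main obstacle'' --- the paper's resolution of it is short and worth looking at.
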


\begin{theorem}\label{theorem:curvesadd}
Let $d\ge 2$. Let $\Gamma\subset \reals^d$ be a connected submanifold of dimension $1$ with induced measure $\gamma$. Suppose that no open subset of $\Gamma$ lies in an affine hyperplane. Suppose that $f_0,\ldots,f_{d}:\Gamma\to\complex$  and $F:(d+1)\Gamma\to\complex$ are measurable functions satisfying \[f_0(x_0)+\ldots +f_d(x_d)=F(x_0+\ldots+x_d) \text{ for }\gamma^{d+1}\text{-almost every } (x_0,\ldots,x_d)\in \Gamma^{d+1}.\] Then there exist unique $v\in\complex^d$ and $c_0,\ldots,c_d\in\complex$ such that for each $0\le j\le d$, $f_j(x)=v\cdot x+c_j$ for $\gamma$-almost every $x\in \Gamma$.
\end{theorem}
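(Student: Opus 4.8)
The plan is to follow the proof of Theorem~\ref{theorem:curves}, replacing products by sums throughout; the additive version is in fact slightly cleaner, since there is no passage to logarithms and no role for non-vanishing of the $f_j$ — which is why no such hypothesis appears. \emph{Step 1 (reduction to the smooth case).} I would first show that, after modification on $\gamma$-null sets, each $f_j$ agrees with a smooth function on a nonempty open arc of $\Gamma$, $F$ agrees with a smooth function on a nonempty open subset of $\reals^d$, and the functional equation holds identically on the corresponding open set of configurations. Parametrise a piece of $\Gamma$ by a smooth embedding $\varphi\colon I\to\reals^d$ and put $u_j=f_j\circ\varphi$. The addition map $\Sigma(t_0,\dots,t_d)=\sum_j\varphi(t_j)$ has differential with columns $\varphi'(t_0),\dots,\varphi'(t_d)$, and it is a submersion on an open dense subset of $I^{d+1}$: if some $d$ of these vectors were dependent throughout an open set then, freezing all but one parameter, an arc of $\Gamma$ would be confined to a proper affine subspace, contrary to hypothesis. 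Choosing $d$ of the variables with independent tangent vectors makes the corresponding partial sum a local diffeomorphism onto an open $W\subset\reals^d$, so the equation takes the Pexider-type form $u_0(t_0)+G(y)=F(\varphi(t_0)+y)$ on an open product, with $G$ measurable on $W$; a standard mollification — first restricting to a positive-measure set where the relevant functions are bounded, to secure local integrability, then convolving in $y$ and then in $t_0$ — produces the smooth representatives, exactly as in the proof of Theorem~\ref{theorem:curves}.

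\emph{Step 2 (differentiation and the use of non-degeneracy).} With everything smooth, differentiate $\sum_j u_j(t_j)=F(\Sigma(t))$ in $t_j$ to get $u_j'(t_j)=\nabla F(\Sigma(t))\cdot\varphi'(t_j)$, and then in $t_k$, $k\ne j$, to get $\varphi'(t_j)^{\top}\operatorname{Hess}F(\Sigma(t))\,\varphi'(t_k)=0$ for every pair $j\ne k$ (at least $d+1\ge 3$ indices are available, as $d\ge2$). By non-degeneracy, the domain of Step 1 contains a nonempty open set of configurations $t^{*}$ in which $d$ of the tangent vectors, say $\varphi'(t_1^{*}),\dots,\varphi'(t_d^{*})$, form a basis and the remaining one, $\varphi'(t_0^{*})=\sum_{i=1}^{d}a_i\varphi'(t_i^{*})$, has every $a_i\ne0$ — otherwise an arc of $\Gamma$ would lie in a proper affine subspace. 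For such $t^{*}$, the symmetric form $\operatorname{Hess}F(\Sigma(t^{*}))$ is diagonal in the basis $(\varphi'(t_i^{*}))_{i=1}^{d}$ by the relations with $1\le j<k\le d$, and the relations with $j=0$ give $a_k\cdot(\text{$k$th diagonal entry})=0$ for each $k$, whence $\operatorname{Hess}F(\Sigma(t^{*}))=0$. Since these configurations form an open set on which $\Sigma$ is a submersion, $\operatorname{Hess}F\equiv0$ on a nonempty open set, so $F(s)=w\cdot s+b$ on some ball $\Omega\subset\reals^d$ for some $w\in\complex^d$, $b\in\complex$; feeding this back into $u_j'(t_j)=\nabla F(\Sigma(t))\cdot\varphi'(t_j)=w\cdot\varphi'(t_j)$ gives $f_j(x)=w\cdot x+c_j$ on a nonempty open arc of $\Gamma$, for each $j$, with the same $w$ and with $\sum_j c_j=b$.

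\emph{Step 3 (globalisation and uniqueness).} Step 2 may be carried out near any configuration in the dense open set of ``good'' configurations, and wherever two such neighbourhoods overlap, the corresponding affine representatives of a given $f_j$ agree on a nonempty open arc; since no open arc of $\Gamma$ lies in an affine hyperplane, two distinct affine maps cannot agree there, so the vectors $w$ and the constants $c_j$ match on overlaps. Being a connected one-manifold, $\Gamma$ is exhausted by arcs, and a chaining argument — identical in structure to that in the proof of Theorem~\ref{theorem:curves}, enlarging the known regions for $F$ and for the $f_j$ step by step — yields a single $v\in\complex^d$ and constants $c_0,\dots,c_d\in\complex$ with $f_j(x)=v\cdot x+c_j$ for $\gamma$-almost every $x\in\Gamma$ and every $j$. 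Uniqueness is the same rigidity: if also $f_j(x)=v'\cdot x+c_j'$ $\gamma$-almost everywhere, then $(v-v')\cdot x=c_0'-c_0$ on a $\gamma$-full, hence — being an identity between affine maps — open, subset of $\Gamma$, which forces $v=v'$ and then $c_j'=c_j$.

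The analytically substantive point is Step 1: passing from merely measurable solutions of the almost-everywhere equation to smooth solutions of the identity. This rests on submersivity of the addition map — itself a consequence of non-degeneracy — together with a mollification that must first be supplied with a region of local integrability. Step 2 is a short linear-algebra computation, and Step 3, though it requires care with the almost-everywhere qualifiers and with the combinatorics of the cover, is routine once the overlap rigidity is available.
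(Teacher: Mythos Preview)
Your argument is correct, and Step~1 and Step~3 match the paper's treatment (which simply says ``the additive analogue follows similarly'' to the multiplicative proof in \S\ref{section:curves}). Your Step~2, however, takes a genuinely different route. The paper's (implicit) additive argument differentiates once: from $g_0'(x_0)=\dot\gamma(x_0)\cdot\nabla G\big(\sum_j\gamma(x_j)\big)$ one fixes $x_0$ at values $v_1,\dots,v_d$ with $\dot\gamma(v_1),\dots,\dot\gamma(v_d)$ spanning $\reals^d$, obtaining $\partial_{\dot\gamma(v_j)}G(s)=g_0'(v_j)$, a constant, on a neighbourhood; hence $\nabla G$ is constant and $G$ is affine. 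You instead differentiate twice and kill the Hessian by a linear-algebra argument, using the additional observation that a generic $(d{+}1)$st tangent vector has all coordinates nonzero in a basis formed by $d$ others. Both work; the paper's approach is marginally more economical (it needs only $C^1$ regularity and no ``full support'' lemma), while your Hessian argument is self-contained and perhaps more transparently symmetric in the indices. Your justification that the coefficients $a_i$ are generically all nonzero is correct: if some $a_i$ vanished on an open interval of $t_0$, then $\varphi'$ would lie in a fixed hyperplane there, forcing an arc of $\Gamma$ into an affine hyperplane.
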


In \S\ref{section:threefold}, the two-fold product in the left-hand side of (\ref{Cauchy}) is replaced by a product of three functions; by combining an adaptation of the strategy in \S\ref{section:strategy} with Theorems~\ref{theorem:curves},~\ref{theorem:curvesadd}, we establish the following theorems for hypersurfaces.

\begin{theorem}\label{theorem:threefold}
Let $d\ge 2$. Let $M\subset \reals^d$ be a connected hypersurface with induced measure $\mu$ which is nowhere-flat. Suppose that $f_1,f_2,f_3:M\to\complex$ and $F:3M\to\complex$ are measurable functions satisfying \[f_1(x_1)f_2(x_2)f_3(x_3)=F(x_1+x_2+x_3) \text{ for }\mu^3\text{-almost every } (x_1,x_2,x_3)\in M^3.\] Suppose further that $f_j^{-1}(\{0\})$ is $\mu$-null for $1\le j\le 3$. Then there exist unique $v\in\complex^d$ and $c_1,c_2,c_3\in\complex$ such that for $j=1,2,3$, $f_j(x)=c_j\exp(v\cdot x)$ for $\mu$-almost every $x\in M$.
\end{theorem}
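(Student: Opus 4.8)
The plan is to reduce the three-fold equation on the hypersurface $M$ to the $(d+1)$-fold Cauchy-Pexider equation on curves, to which Theorem~\ref{theorem:curves} applies. First I would fix a generic point $p\in M$ and, since $M$ is nowhere-flat, arrange (after a rigid motion and passing to a small connected open patch $U\ni p$) that the Gauss map $d\mathcal{G}$ is non-degenerate on $U$ in at least one direction; concretely, I want a nowhere-flat patch on which one can find an abundant supply of plane curves $\Gamma\subset U$ with the property that no open subarc of $\Gamma$ lies in an affine hyperplane. The natural candidates are curves cut out by slicing $U$ with a suitable rotating family of $2$-planes through $p$; nowhere-flatness guarantees that for a generic choice of slicing $2$-plane the resulting curve has nonvanishing geodesic-type curvature relative to the ambient affine structure, hence escapes every affine hyperplane locally. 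One then needs to check that the hypotheses of Theorem~\ref{theorem:curves} can be verified for such a $\Gamma$ with $d$ replaced appropriately, and that the restricted null-set hypotheses are inherited: by Fubini, for $\mu$-a.e.\ choice in the rotating family the corresponding slice carries the inherited equation $f_1(x_1)f_2(x_2)f_3(x_3)=F(x_1+x_2+x_3)$ for $\gamma^3$-a.e.\ triple on that slice, and $f_j^{-1}(\{0\})$ meets $\gamma$-a.e.\ slice in a $\gamma$-null set.

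The key structural point is that the three-fold equation on a single curve $\Gamma$ of the above type is itself an instance of the hypothesis of a curve theorem only after one more reduction, because Theorem~\ref{theorem:curves} is stated for a $(d+1)$-fold product on a curve in $\reals^d$, whereas here we have a $3$-fold product on a curve in $\reals^d$. So the intended mechanism is the adaptation of the strategy of \S\ref{section:strategy} alluded to in the text: I would use translates. For each fixed $a\in M$ near $p$, the function $x\mapsto f_1(x)$ and the ``partially summed'' data give, upon setting $x_3=a$ and letting $x_1,x_2$ range over $\Gamma$, a two-fold equation $f_1(x_1)f_2(x_2)=F_a(x_1+x_2)$ where $F_a(\cdot)=F(\cdot+a)/f_3(a)$; but to get the exponential-affine conclusion out of a two-variable curve equation I need the curve analogue of the $2$-fold Cauchy-Pexider result, which corresponds to $d=1$ in Theorem~\ref{theorem:curves} only in a degenerate sense. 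The honest route, and the one I expect the paper takes, is instead: differentiate/iterate the three-fold relation to manufacture, for $\mu$-a.e.\ configuration, a genuine $(d+1)$-fold product relation along a single curve by composing the three-fold relation with itself across several overlapping curves through a common web of points, so that enough independent evaluations accumulate to invoke Theorem~\ref{theorem:curves} (with its $d$) directly. Concretely, one chooses $d+1$ points $x_0,\dots,x_d$ on a fixed non-degenerate curve $\Gamma\subset U$ and exhibits $\prod_{j}\tilde f(x_j)=\tilde F(\sum_j x_j)$ for suitable $\tilde f,\tilde F$ built from $f_1,f_2,f_3$ by freezing the remaining variables on auxiliary curves, using nowhere-flatness to guarantee that the freezing can be done consistently on a positive-measure set of auxiliary configurations.

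From Theorem~\ref{theorem:curves} applied on each such curve $\Gamma$ one concludes that $f_1,f_2,f_3$ agree $\gamma$-a.e.\ on $\Gamma$ with exponential-affine functions $c_j^\Gamma\exp(v^\Gamma\cdot x)$. The remaining task is a gluing argument: as $\Gamma$ varies over the rotating family covering $U$, the affine functionals $v^\Gamma$ must be mutually compatible on overlaps (two curves through a common arc force the restrictions of the two linear forms to that arc's tangent directions to coincide), and since $M$ is a hypersurface the union of tangent directions of curves through a fixed point spans the full tangent space, so $v^\Gamma$ extends to a single $v\in\complex^d$ locally; connectedness of $M$ then propagates $v$ and the constants $c_j$ globally, exactly as in the gluing step underlying Theorems~\ref{theorem:mainmult}. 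Uniqueness of $(v,c_1,c_2,c_3)$ is immediate from the non-degeneracy hypothesis: if $c_j\exp(v\cdot x)=c_j'\exp(v'\cdot x)$ on a $\mu$-positive subset of $M$, then $(v-v')\cdot x$ is constant on a set whose affine span is all of $\reals^d$ (no open subset of $M$ lies in an affine hyperplane, since $M$ is nowhere-flat), forcing $v=v'$ and then $c_j=c_j'$. The main obstacle I anticipate is the first reduction: producing, from the bare nowhere-flatness hypothesis, a controlled two-parameter family of curves $\Gamma$ on which (i) no open arc lies in an affine hyperplane and (ii) the frozen-variable construction yielding a genuine $(d+1)$-fold relation can be carried out on a positive-measure set of auxiliary data simultaneously — this is where the geometry of the second fundamental form really enters and where the absence of a cylinder-free hypothesis (compared with Theorem~\ref{theorem:mainmult}) must be compensated by the extra factor in the product.
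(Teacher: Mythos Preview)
Your plan diverges substantially from the paper's argument, and the central reduction you propose contains a real gap.

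The paper does \emph{not} reduce the generic case to Theorem~\ref{theorem:curves}. For $d\ge 3$ and $M$ with $\mathcal{G}(M)$ not contained in a planar circle, it instead adapts the incidence-variety/submersion machinery of \S\ref{section:strategy} directly: one works in $(\reals^d)^{18}$ with the hyperplane $\Pi$ cut out by three equations of the form $x_1+y_2+z_3=x_4+y_5+z_6$ (cyclically), proves an analogue of Proposition~\ref{prop:gammasurjective} showing that the associated map $\gamma_M:\mathcal{R}_M\to R_M$ is surjective, and then obtains by the regrouping trick a relation $F(w_1)F(w_2)F(w_3)=F(w_4)F(w_5)F(w_6)$ on an open set in $\Lambda$. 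Freezing one variable reduces this to a two-fold Cauchy--Pexider equation on an open ball in $\reals^d$, to which Lemma~\ref{lemma:classical} applies; the gluing is then exactly as in \S\ref{section:strategy}. Theorem~\ref{theorem:curves} is invoked only for the residual cases: $d=2$ (where it applies verbatim with three factors), and $d\ge 3$ with $M$ locally a cylinder $\Gamma\times V$, where one first applies Lemma~\ref{lemma:classical} in the flat $V$-directions and then Theorem~\ref{theorem:curves} with $d=2$ on the planar curve $\Gamma$.

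Your proposed route has two concrete problems. First, the curves you describe --- slices of $M$ by $2$-planes --- are planar, hence lie in an affine $2$-plane and a fortiori in an affine hyperplane of $\reals^d$ for $d\ge 3$; so Theorem~\ref{theorem:curves} in ambient dimension $d$ cannot be applied to them. You could instead regard such a $\Gamma$ as a curve in $\reals^2$, but then Theorem~\ref{theorem:curves} requires exactly three factors and yields only a vector $v^\Gamma\in\complex^2$ tangential to that plane, which is precisely what the paper exploits in the cylinder case --- not the generic one. Second, and more seriously, the step where you ``manufacture a genuine $(d+1)$-fold product relation along a single curve by composing the three-fold relation with itself'' is not a mechanism that exists in the paper and is left entirely unspecified in your outline. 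Multiplying two instances of the three-fold relation yields a product of six $f$-values equal to a \emph{product} $F(\cdot)F(\cdot)$, not a single $F$ of a sum; collapsing that product is exactly the nontrivial content supplied by the submersion argument (the analogue of Lemma~\ref{lemma:regroup} and Proposition~\ref{prop:gammasurjective}), which you have bypassed. Without that, there is no way to promote the three-fold hypersurface relation to a $(d+1)$-fold curve relation, and your argument stalls at this step.
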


\begin{theorem}
Let $d\ge 2$. Let $M\subset \reals^d$ be a connected hypersurface with induced measure $\mu$ which is nowhere-flat. Suppose that $f_1,f_2,f_3:M\to\complex$ and $F:3M\to\complex$ are measurable functions satisfying \[f_1(x_1)+f_2(x_2)+f_3(x_3)=F(x_1+x_2+x_3) \text{ for }\mu^3\text{-almost every } (x_1,x_2,x_3)\in M^3.\] Then there exist unique $v\in\complex^d$ and $c_1,c_2,c_3\in\complex$ such that for $1\le j\le 3$, $f_j(x)=v\cdot x + c_j$ for $\mu$-almost every $x\in M$.
\end{theorem}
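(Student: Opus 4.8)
The plan is to reduce the additive statement to its multiplicative counterpart, Theorem~\ref{theorem:threefold}, by exponentiation, and then to upgrade the resulting ``exponential affine modulo $2\pi i\integers$'' conclusion to a genuine affine one. Concretely, set $\tilde f_j:=\exp(f_j)$ for $j=1,2,3$ and $\tilde F:=\exp(F)$; these are measurable, the $\tilde f_j$ are nowhere zero, and $\tilde f_1(x_1)\tilde f_2(x_2)\tilde f_3(x_3)=\tilde F(x_1+x_2+x_3)$ for $\mu^3$-a.e.\ triple. Theorem~\ref{theorem:threefold} then supplies a unique $v\in\complex^d$ and unique $c_1,c_2,c_3\in\complex\setminus\{0\}$ with $\exp(f_j(x))=c_j\exp(v\cdot x)$ for $\mu$-a.e.\ $x$. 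Choosing a logarithm $\ell_j$ of each $c_j$, the function $n_j(x):=(2\pi i)^{-1}(f_j(x)-v\cdot x-\ell_j)$ is measurable and, since $f_j(x)-v\cdot x-\ell_j\in 2\pi i\integers$ a.e., is $\integers$-valued for $\mu$-a.e.\ $x$. Substituting $f_j(x)=v\cdot x+\ell_j+2\pi i\,n_j(x)$ back into the hypothesis and cancelling the affine part leaves
\[ n_1(x_1)+n_2(x_2)+n_3(x_3)=N(x_1+x_2+x_3)\qquad\text{for }\mu^3\text{-a.e.\ }(x_1,x_2,x_3), \]
where $N:=(2\pi i)^{-1}\bigl(F-v\cdot(\,\cdot\,)-\ell_1-\ell_2-\ell_3\bigr)$ is $\integers$-valued $\mu*\mu*\mu$-a.e.\ on $3M$. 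Once we show every $n_j$ equals $\mu$-a.e.\ some integer constant $\alpha_j$, setting $c_j':=\ell_j+2\pi i\,\alpha_j$ gives the asserted representation $f_j(x)=v\cdot x+c_j'$.

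The core of the argument is therefore the rigidity statement that an equation $n_1(x_1)+n_2(x_2)+n_3(x_3)=N(x_1+x_2+x_3)$, with measurable $\integers$-valued $n_j$ on a connected nowhere-flat hypersurface, forces each $n_j$ to be $\mu$-a.e.\ constant. Freezing $x_3$ at a $\mu$-generic value shows, via Fubini, that $n_1(x_1)+n_2(x_2)$ agrees a.e.\ with an $\integers$-valued function $P$ of $x_1+x_2$ alone; freezing $x_2$ at two generic values then shows that $z\mapsto P(z+h)-P(z)$ is, for a.e.\ $h$ in the difference set $M-M$, a.e.\ constant in $z$, with value $c(h)\in\integers$. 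Because $M$ is nowhere-flat the sum and difference maps on $M^2$ are submersions on an open dense set of pairs, so $2M$ and $M-M$ have nonempty interior and $c$ is a measurable additive cocycle on an open subset of $\reals^d$; the standard local Cauchy-equation argument makes $c(h)=w\cdot h$ for some $w\in\complex^d$, and since $c$ is $\integers$-valued on an open set this forces $w=0$. Hence $P$ is a.e.\ constant near every interior point of $2M$; combining this with $n_1(x_1)+n_2(x_2)=P(x_1+x_2)$ and the connectedness of $M$ makes $n_1$ and $n_2$ each $\mu$-a.e.\ constant, and feeding this back into the displayed equation and freezing one variable at a time makes $N$, and hence $n_3$, $\mu$-a.e.\ constant as well.

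Uniqueness is routine: if $f_j(x)=v\cdot x+c_j=v'\cdot x+c_j'$ a.e., then $(v-v')\cdot x$ is a.e.\ constant on $M$, so each of $\Re(v-v')\cdot x$ and $\Im(v-v')\cdot x$ is a real-analytic function that is a.e.\ constant on the connected manifold $M$, hence constant; since $M$ is nowhere-flat it lies in no affine hyperplane, forcing $v=v'$ and then $c_j=c_j'$. The step I expect to cost the most effort is the integer-valued rigidity of the second paragraph: checking that $M-M$ and $2M$ genuinely carry enough of a neighbourhood for the cocycle $c$ to be affine, that the ``freeze one variable at a time'' reductions compose correctly under Fubini with respect to the convolution measures on $2M$ and $3M$, and that the connectedness of $M$ propagates the local constancy of $n_1,n_2$ to a global one. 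An alternative, essentially parallel to the proof of Theorem~\ref{theorem:threefold}, is to skip exponentiation altogether and instead restrict the equation to a sufficiently rich family of curves inside $M$, invoking Theorem~\ref{theorem:curvesadd} in place of Theorem~\ref{theorem:curves} and then patching; the two routes are of comparable difficulty.
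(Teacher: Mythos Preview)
Your alternative route---running the additive analogue of the proof of Theorem~\ref{theorem:threefold} directly, replacing Theorem~\ref{theorem:curves} by Theorem~\ref{theorem:curvesadd} and the multiplicative form of Lemma~\ref{lemma:classical} by its additive counterpart---is precisely what the paper does (it simply remarks that ``the additive analogue of the theorem is, once again, similar''). That route is correct.

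Your main route, however, has a genuine gap in the integer-rigidity step. After exponentiating and invoking Theorem~\ref{theorem:threefold} you correctly arrive at an integer-valued three-fold equation $n_1(x_1)+n_2(x_2)+n_3(x_3)=N(x_1+x_2+x_3)$. You then freeze $x_3$ to obtain a \emph{two-fold} equation $n_1(x_1)+n_2(x_2)=P(x_1+x_2)$ and run a cocycle argument on that. But the two-fold restricted equation on a hypersurface is the situation governed by Theorems~\ref{theorem:mainmult}--\ref{theorem:mainadd}, which require $M$ to be cylinder-free; the present theorem assumes only nowhere-flatness. When $M$ is a cylinder (or, for $d=2$, any curve) the two-fold equation admits non-affine integer solutions: with $\Gamma=\{(s,s^2)\}$ and $M=\Gamma\times V$, setting $n_1(t,y)=n_2(t,y)=\psi(s)$ for an arbitrary measurable $\psi:\reals\to\integers$ yields a well-defined $P$, since distinct unordered pairs on the parabola have distinct sums. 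For such $n_2$ your $c(h)=n_2(x_2')-n_2(x_2)$ is a well-defined function of $h=x_2'-x_2$ (the subtraction map on the parabola is injective), but it is \emph{not} additive: taking $h_1=(1,1)$ and $h_2=(1,5)$ one finds $c(h_1)+c(h_2)=\psi(1)-\psi(0)+\psi(3)-\psi(2)$ while $c(h_1+h_2)=c(2,6)=\psi(5/2)-\psi(1/2)$, and these disagree for $\psi=\one_{(0.9,\,1.1)}$. So the ``standard local Cauchy-equation argument'' is unavailable and the conclusion $c(h)=w\cdot h$ fails.

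The underlying issue is that freezing $x_3$ throws away exactly the extra structure that makes the three-fold problem tractable without the cylinder-free hypothesis. Repairing your main route would require an integer-rigidity argument that keeps all three variables in play; but that argument is, up to the trivial coda ``$v\cdot x+c$ integer-valued on an open set forces $v=0$'', the direct additive proof itself. Exponentiation therefore buys no shortcut here.
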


Other than in the aforementioned \cite{christshao}, these functional equations have arisen in Foschi \cite{foschi} (for four specific submanifolds $M$). The proof provided in this paper presents an alternative approach to the methods in \cite{foschi}, one which is less reliant on the rigid geometry of particular $M$.
\newline

\paragraph{\bf{Acknowledgement}}

The author is grateful to M. Christ for several illuminating discussions, especially for those relating to the method explored in \S\ref{section:strategy}.

\section{A General Strategy} \label{section:strategy}

Define $\lambda^n$ to be $n$-dimensional Lebesgue measure on $\reals^n$ and $\lambda=\lambda^1$. By a slight abuse of notation, we will also write $\lambda^n$ for the induced measure on an $n$-dimensional linear subspace of $\reals^d$. 

A starting point for our proofs of Theorems~\ref{theorem:mainmult} and \ref{theorem:appmain} will be the following approximate version for the solution to the Cauchy-Pexider functional equation on open balls of $\reals^d$ which is proved in Christ \cite{christyoungs}.

\begin{lemma}\label{lemma:classical}
Given $\epsilon>0$ there exists $\delta>0$ depending on the dimension $d$ with the following property. Whenever $B\subset\reals^d$ is a non-empty ball and $f:B\to\complex$, $g:B\to\complex$, $h:2B\to\complex$ are measurable functions vanishing only on $\lambda^d$-null sets and satisfying \[\lambda^{2d}(\{(x,y)\in B^2\,\,|\,\,|f(x)g(y)h(x+y)^{-1}-1|>\delta\})<\lambda^d(B)^2\delta,\] it follows that there exist $c\in\complex\backslash\{0\}$ and $v\in\complex^d$ such that \[\lambda^d(\{x\in B\,\,|\,\,|f(x)(c\exp(v\cdot x))^{-1}-1|>\epsilon\})<\lambda^d(B)\epsilon.\]

If, in addition, $f(x)g(y)=h(x+y)$ for $\lambda^{2d}$-a.e. $(x,y)\in B^2$ then there exist $c\in\complex\backslash\{0\}$ and $v\in\complex^d$ such that $f(x)=c\exp(v\cdot x)$ for $\lambda^d$-a.e. $x\in B$.
\end{lemma}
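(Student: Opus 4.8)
The plan is to deduce Lemma~\ref{lemma:classical} from the classical Cauchy-Pexider theorem together with a standard approximation/stability argument, treating the approximate statement first and then reading off the exact statement as the special case $\delta=0$ (where the conclusion is upgraded from "near an exponential affine function" to "equal to one").

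First I would handle the exact case directly, since it is the cleanest and is what Theorems~\ref{theorem:mainmult} and~\ref{theorem:mainadd} ultimately need. Assume $f(x)g(y)=h(x+y)$ for $\lambda^{2d}$-a.e. $(x,y)\in B^2$, with $f,g$ vanishing only on null sets. Fix a point $y_0\in B$ such that the identity holds for a.e. $x$ in a subball; after translating and using $g(y_0)\ne 0$ a.e.\ one writes $h(x+y_0)=f(x)g(y_0)$, which identifies $h$ (up to a multiplicative constant) with $f$ on a translate of a ball. Feeding this back into the equation gives $f(x)g(y)=c\,f(x+y-y_0)$ for a.e.\ $(x,y)$, i.e.\ after absorbing constants, a genuine Pexider-type relation $\tilde f(x)\tilde g(y)=\tilde h(x+y)$ holding a.e.\ on a product of balls with all three functions now defined on honest open sets. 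At this point one invokes the measurable solution of the classical Cauchy-Pexider equation (as recalled in the Introduction): measurable $f,g,h$ on $\reals^d$ with $f,g$ a.e.\ nonzero and $f(x)g(y)=h(x+y)$ a.e.\ force $f(x)=c_1e^{v\cdot x}$, $g(x)=c_2e^{v\cdot x}$ a.e. The only wrinkle is that our relation holds only a.e.\ on a \emph{ball}, not on all of $\reals^d$; this is dispatched by a routine localization — restrict to a cube, use Fubini to find full-measure sets of good slices, apply the classical result on the cube (the classical proof is local, or alternatively one extends via the relation itself since $2B$ already contains enough translates), and note that a measurable function agreeing a.e.\ on a ball with $c_1e^{v\cdot x}$ has this form a.e.\ on $B$.

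For the approximate statement I would argue by contradiction and compactness. Suppose the conclusion fails for some $\epsilon>0$: then there are balls $B_n$ and functions $f_n,g_n,h_n$ with the hypothesis holding for $\delta=1/n$ but with no $(c,v)$ making $f_n$ within $\epsilon$ of $c\,e^{v\cdot x}$ off an $\epsilon$-fraction of $B_n$. By translating and dilating (the hypothesis and conclusion are invariant under affine rescaling of $B$, absorbing the Jacobian into $\delta$ and into the constants $c,v$), normalize $B_n=B_1(0)$. Now $\log|f_n|$, suitably normalized, and the phase of $f_n$ should be shown to be precompact in measure; the key input is Christ's result \cite{christyoungs} on the near-Pexider equation on balls — indeed this is essentially \emph{restating} that result, so the honest move is simply to cite \cite{christyoungs} for the approximate half and only supply the short reduction to the exact half. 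I expect the main obstacle, and the step deserving the most care, to be exactly this interface: making precise that the quantitative statement with the ball-dependent thresholds $\lambda^d(B)^2\delta$ and $\lambda^d(B)\epsilon$ follows from the scale-invariance of the problem together with the version of the theorem for the unit ball, since one must track how the exceptional-set measures transform under dilation and confirm that no loss in $\delta$ depending on the radius of $B$ is incurred.

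Thus the proof I would write is short: state that the first (approximate) assertion is precisely Theorem~[the relevant one] of \cite{christyoungs} after the affine normalization $B=B_1(0)$, and then give the few lines above deriving the exact assertion — fix a good slice $y_0$, identify $h$ with a constant multiple of $f$ near $y_0$, reduce to a genuine a.e.\ Cauchy-Pexider relation on a product of balls, and conclude by the classical measurable solution together with the localization remark. The uniqueness of $(c,v)$ in the exact case is immediate since two exponential affine functions agreeing on a set of positive measure in a ball coincide.
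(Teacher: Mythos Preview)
Your proposal is essentially correct and aligns with the paper's treatment: the paper does not prove Lemma~\ref{lemma:classical} at all, but simply introduces it with the sentence ``the following approximate version \ldots\ is proved in Christ \cite{christyoungs}'' and cites that reference for both the approximate and exact assertions. Your plan to cite \cite{christyoungs} for the approximate half is therefore exactly what the paper does; your additional few lines deriving the exact half directly from the classical Cauchy--Pexider theorem are sound but, from the paper's point of view, superfluous, since it treats the entire lemma as a black-box quotation.
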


The next lemma will allow us to work locally.

\begin{lemma}\label{lemma:localise}
Suppose that the hypersurface $M$ with induced measure $\mu$ is nowhere-flat. Let $U$, $V$ be open subsets of $M$ with non-empty intersection. Suppose that the function $f:U\cup V\to\complex$ satisfies $f(x)= c\exp(v\cdot x)$ for $\mu$-a.e. $x\in U$ and $f(x)=c_1\exp(v_1\cdot x)$ for $\mu$-a.e. $x\in V$ where $v,v_1\in\complex^d$ and $c,c_1\in\complex\backslash\{0\}$. Then $v=v_1$ and $c=c_1$.
\end{lemma}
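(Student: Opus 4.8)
The plan is to work entirely on the overlap $W:=U\cap V$. First I would upgrade the two almost-everywhere identities to pointwise identities there. Since $W$ is open and non-empty in $M$ and non-empty open subsets of a manifold carry positive induced measure, the set of $x\in W$ at which both $f(x)=c\exp(v\cdot x)$ and $f(x)=c_1\exp(v_1\cdot x)$ hold is $\mu$-full in $W$, hence dense in $W$. The maps $x\mapsto c\exp(v\cdot x)$ and $x\mapsto c_1\exp(v_1\cdot x)$ are continuous on $M$, so the subset of $W$ on which they agree is closed in $W$ and contains a dense set, hence is all of $W$. Thus $c\exp(v\cdot x)=c_1\exp(v_1\cdot x)$ for every $x\in W$.

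The geometric input I would isolate is: \emph{a nowhere-flat hypersurface contains no open subset lying in an affine hyperplane}. Indeed, if an open $W_0\subset M$ were contained in an affine hyperplane $H$, then for $x\in W_0$ a sufficiently small orientable open neighbourhood $U_0\subset W_0$ of $x$ is, by a dimension count, open in $H$; its Gauss map is therefore constant, so $(d\mathcal{G}^{U_0})_x=0$, contradicting nowhere-flatness at $x$.

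It then remains to read off $v=v_1$ and $c=c_1$ from $c\exp(v\cdot x)=c_1\exp(v_1\cdot x)$, $x\in W$. Taking moduli and logarithms (legitimate since $c,c_1\ne 0$) shows that the affine function $x\mapsto(\Re v-\Re v_1)\cdot x$ is constant on $W$; were $\Re v\ne\Re v_1$ this would confine the open subset $W$ of $M$ to an affine hyperplane, which nowhere-flatness forbids, so $\Re v=\Re v_1$ and $|c|=|c_1|$. Dividing, $c/c_1=\exp(iw\cdot x)$ on $W$ with $w:=\Im(v_1-v)\in\reals^d$ and $|c/c_1|=1$; on a connected open subset $W_0\subset W$ the continuous map $x\mapsto w\cdot x$ takes values in the discrete set $\arg(c/c_1)+2\pi\integers$ and is hence constant on $W_0$, and once more $w\ne0$ would trap the open set $W_0$ in an affine hyperplane. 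Hence $w=0$, so $v=v_1$ and then $c=c_1$. I do not expect a genuine obstacle here; the only point needing a little care is this last step, where the $2\pi$-periodicity of the complex exponential in its imaginary direction forces a passage to a connected piece of $W$ before the hyperplane obstruction can be invoked, and the real content is the geometric fact of the second paragraph — precisely where nowhere-flatness enters.
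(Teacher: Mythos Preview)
Your proof is correct and follows essentially the same approach as the paper: both separate real and imaginary parts of the identity $c\exp(v\cdot x)=c_1\exp(v_1\cdot x)$ on $U\cap V$ and reduce to the geometric fact that a nowhere-flat hypersurface has no open subset contained in an affine hyperplane. Your version is more carefully written---you explicitly justify the continuity upgrade from a.e.\ to everywhere, prove the geometric fact, and spell out the connectedness argument needed to handle the $2\pi\integers$ ambiguity in the imaginary part---whereas the paper leaves these points implicit.
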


\begin{proof}
For $\mu$-almost every $x\in U\cap V$, \[c\exp(v\cdot x)=f(x)=c_1\exp(v_1\cdot x).\] Thus for all such $x$,
\begin{eqnarray}&(\Re v-\Re v_1)\cdot x=a& \\ &(\Im v - \Im v_1)\cdot x = b + 2\pi n(x)&
\end{eqnarray}
where $a,b\in\reals$ satisfy $e^{a+ib}=c_1c^{-1}$ and $n:U\cap V\to \integers$ is some function. Since $M$ is nowhere-flat, no open subset of $M$ can lie on an affine hyperplane. Thus $v=v_1$ so also $c_1c^{-1}=1$.
\end{proof}


In particular, the lemma implies the stated uniqueness in Theorem~\ref{theorem:mainmult}. Furthermore, it will suffice to prove the theorem for some open cover of $M$. Thus it will be no loss of generality to prove Theorem~\ref{theorem:mainmult} for a connected, orientable, bounded hypersurface $M$ with $\mu$ finite. In particular, this means that the Gauss normal map $\mathcal{G}$ is globally defined on $M$. Define also the map $\mathcal{\overline{G}}:M\to\overline{\mathbb{S}^{d-1}}$, where $\overline{\mathbb{S}^{d-1}}$ is the unit sphere with antipodal points identified, which maps $x\in M$ to the class of $\mathcal{G}(x)$. We will identify the tangent space $T_xM$ with the linear hyperplane $\text{span}\{\mathcal{G}(x)\}^\perp\subset\reals^d$ in the canonical way.

In this section, the strategy used in \cite{christshao} for $\mathbb{S}^2\subset\reals^3$ will be generalised to hypersurfaces. The proof of Theorem~\ref{theorem:mainmult} will be completed in the subsequent section.

Let $d\ge 3$ and write points in $(\reals^d)^4\times(\reals^d)^4$ as $(x,y)$ where $x=(x_1,x_2,x_3,x_4)$ and $y=(y_1,y_2,y_3,y_4)$ with $x_j, y_j\in\reals^d$ for $1\le j\le 4$. Let $\Pi$ be the hyperplane in $(\reals^d)^4\times(\reals^d)^4$ defined by $x_1+y_2=x_3+y_4$ and $y_1+x_2=y_3+x_4$.  Let $\mathcal{P}_M=(M^4\times M^4)\cap\Pi$. By a \emph{smooth point} of $\mathcal{P}_M$ we mean a point where $M^4\times M^4$ intersects $\Pi$ transversally. Write $\mathcal{S}_M$ for the set of smooth points of $\mathcal{P}_M$. Then $\mathcal{S}_M$ is a submanifold of $\mathbb{R}^{8d}$ of dimension $8(d-1)+6d-8d = 6d-8$. Write $\sigma$ for the volume measure on $\mathcal{S}_M$ associated to the induced Riemannian structure.

Consider the $3d$-dimensional hyperplane $\Lambda\subset(\mathbb{R}^d)^4$ of points $w=(w_1,w_2,w_3,w_4)$ satisfying $w_1+w_2=w_3+w_4$. The linear addition map $(\mathbb{R}^d)^4\times(\mathbb{R}^d)^4\to(\mathbb{R}^d)^4$, $(x,y)\mapsto x+y$ restricts to a smooth map $\pi_M:\mathcal{S}_M\to\Lambda$. Write $\mathcal{R}_M$ for the set of \emph{regular points} of $\pi_M$, that is the points of $\mathcal{S}_M$ where $\pi_M$ is a submersion. Then $\mathcal{R}_M$ is an open subset of $\mathcal{S}_M$.

\begin{lemma}\label{lemma:geomcond} Let $(x,y)\in \mathcal{P}_M$. If 
\begin{equation}\label{smooth}
\overline{\mathcal{G}}(x_1)\ne\overline{\mathcal{G}}(y_2)\text{ and } \overline{\mathcal{G}}(x_2)\ne\overline{\mathcal{G}}(y_1)
\end{equation} then $(x,y)$ is a smooth point of $\mathcal{P}_M$.

If, in addition, 
\begin{equation}\label{regular1}
\overline{\mathcal{G}}(x_j)\ne\overline{\mathcal{G}}(y_j) \text{ for }1\le j\le 4\end{equation} and
\begin{equation}\label{regular2}
\bigcap_{j=1}^4\text{\emph{span}}\{\mathcal{G}(x_j), \mathcal{G}(y_j)\}=\{0\}\end{equation} then $(x,y)$ is a regular point of $\pi_M$. 
\end{lemma}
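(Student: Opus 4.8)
The plan is to reduce both assertions to linear algebra, exploiting that $\Pi$ and $\Lambda$ are linear subspaces, so that ``$M^4\times M^4$ meets $\Pi$ transversally'' and ``$\pi_M$ is a submersion'' become statements about orthogonal complements in $\reals^{8d}$ and $\reals^{4d}$. Set $N=T_{(x,y)}(M^4\times M^4)=\prod_{j=1}^4 T_{x_j}M\times\prod_{j=1}^4 T_{y_j}M$. Under the identification $T_zM=\operatorname{span}\{\mathcal{G}(z)\}^\perp$ one computes $N^\perp=\prod_{j=1}^4\reals\,\mathcal{G}(x_j)\times\prod_{j=1}^4\reals\,\mathcal{G}(y_j)$, and, reading off the two defining $\reals^d$-valued equations of $\Pi$, $\Pi^\perp=\{(u,v,-u,-v,v,u,-v,-u):u,v\in\reals^d\}$. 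These two computations are the only mechanical ingredient.

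For the first assertion, transversality of $M^4\times M^4$ and $\Pi$ at $(x,y)$ is equivalent to $N^\perp\cap\Pi^\perp=\{0\}$. An element of this intersection has, reading off blocks $1$ and $6$, a common vector $u$ that is simultaneously a scalar multiple of $\mathcal{G}(x_1)$ and of $\mathcal{G}(y_2)$, and, reading off blocks $2$ and $5$, a common vector $v$ simultaneously a scalar multiple of $\mathcal{G}(x_2)$ and of $\mathcal{G}(y_1)$. Hypothesis \eqref{smooth} says exactly that $\mathcal{G}(x_1)\not\parallel\mathcal{G}(y_2)$ and $\mathcal{G}(x_2)\not\parallel\mathcal{G}(y_1)$, forcing $u=v=0$; since each Gauss normal is nonzero, the whole element vanishes, so $(x,y)$ is a smooth point.

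For the second assertion, assume in addition \eqref{regular1} and \eqref{regular2}. By the first part $(x,y)\in\mathcal{S}_M$, so $T_{(x,y)}\mathcal{S}_M=N\cap\Pi$ and $d(\pi_M)_{(x,y)}$ is the restriction to $N\cap\Pi$ of the linear addition map $(\dot x,\dot y)\mapsto\dot x+\dot y$. Since $\pi_M$ takes values in $\Lambda$, it is a submersion at $(x,y)$ iff this differential maps onto $\Lambda$, which fails iff some $\zeta=(\zeta_1,\zeta_2,\zeta_3,\zeta_4)\in\Lambda\setminus\{0\}$ is orthogonal to the image, i.e. $(\zeta,\zeta)\in(N\cap\Pi)^\perp=N^\perp+\Pi^\perp$. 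Expanding this membership, there are scalars $a_j,b_j$ and vectors $u,v\in\reals^d$ so that each $\zeta_j$ equals $a_j\mathcal{G}(x_j)$ plus a sign-term in $u,v$ coming from the $x$-blocks, and also equals $b_j\mathcal{G}(y_j)$ plus a sign-term in $u,v$ coming from the $y$-blocks, with the signs dictated by the shape of $\Pi^\perp$. Subtracting these two expressions for $\zeta_j$ yields $a_j\mathcal{G}(x_j)-b_j\mathcal{G}(y_j)=\pm(v-u)$ for each $j$; hence $v-u\in\operatorname{span}\{\mathcal{G}(x_j),\mathcal{G}(y_j)\}$ for every $j$, and \eqref{regular2} forces $v-u=0$. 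Then $a_j\mathcal{G}(x_j)=b_j\mathcal{G}(y_j)$, and \eqref{regular1} makes $\mathcal{G}(x_j)$ and $\mathcal{G}(y_j)$ linearly independent, so $a_j=b_j=0$ for all $j$. Therefore $\zeta=(u,u,-u,-u)$, and the relation $\zeta_1+\zeta_2=\zeta_3+\zeta_4$ defining $\Lambda$ gives $u=0$, whence $\zeta=0$ --- a contradiction. Thus $(x,y)\in\mathcal{R}_M$.

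The main obstacle is organizational rather than conceptual: correctly identifying $N^\perp$ and especially $\Pi^\perp$, and then carrying the eight sign patterns through without slip. The one genuine idea is that the ``slack'' vector $v-u$ arising from the decomposition $(\zeta,\zeta)=(\text{a normal to }M^4\times M^4)+(\text{a normal to }\Pi)$ is automatically forced into $\bigcap_{j=1}^4\operatorname{span}\{\mathcal{G}(x_j),\mathcal{G}(y_j)\}$, which is exactly the quantity controlled by \eqref{regular2}; once it vanishes, \eqref{regular1} disposes of the remaining normal-to-$M$ contributions. I expect nothing subtler than this bookkeeping to be required.
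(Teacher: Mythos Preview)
Your argument is correct, and it proceeds by a genuinely different route from the paper's own proof. The paper argues \emph{primally}: for smoothness it simply notes that transversality amounts to $T_{x_1}M+T_{y_2}M=\reals^d$ and $T_{x_2}M+T_{y_1}M=\reals^d$, which is immediate from \eqref{smooth}; for regularity it fixes an arbitrary $w\in\Lambda$ and explicitly \emph{constructs} a preimage $(u,v)\in T_{(x,y)}\mathcal{S}_M$ with $u+v=w$, using that each $u_j$ (or $v_j$) may be varied freely over a translate of $T_{x_j}M\cap T_{y_j}M=\operatorname{span}\{\mathcal{G}(x_j),\mathcal{G}(y_j)\}^\perp$, and that \eqref{regular2} guarantees these $(d-2)$-dimensional spaces sum to $\reals^d$. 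You instead argue \emph{dually}, passing to orthogonal complements: transversality becomes $N^\perp\cap\Pi^\perp=\{0\}$, and surjectivity becomes the non-existence of a nonzero $\zeta\in\Lambda$ with $(\zeta,\zeta)\in N^\perp+\Pi^\perp$. Since $N^\perp$ is a product of one-dimensional normal lines rather than $(d-1)$-dimensional tangent hyperplanes, your bookkeeping involves only scalars $a_j,b_j$ and the two slack vectors $u,v$; the key observation---that the difference $v-u$ lands in $\bigcap_j\operatorname{span}\{\mathcal{G}(x_j),\mathcal{G}(y_j)\}$---is exactly the dual of the paper's use of the sum of the $\operatorname{span}\{\mathcal{G}(x_j),\mathcal{G}(y_j)\}^\perp$. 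Each approach has its merits: the paper's is constructive and makes the geometry of the tangent spaces visible, while yours is slightly more economical in that it manipulates one-dimensional data throughout and dispatches both parts of the lemma with the same orthogonal-complement mechanism.
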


\begin{proof}
The point $(x,y)\in\mathcal{P}_M$ is smooth if and only if \[T_{x_1}M+T_{y_2}M-T_{x_3}M-T_{y_4}M=T_{y_1}M+T_{x_2}M-T_{y_3}M-T_{x_4}M=\reals^d.\] Thus condition (\ref{smooth}) is sufficient.

For $(x,y)\in\mathcal{S}_M$, we identify $T_{(x,y)}\mathcal{S}_M$ with $(\prod_{j=1}^4T_{x_j}M\times\prod_{j=1}^4T_{y_j}M)\cap\Pi$ and $T_{x+y}\Lambda$ with $\Lambda$ in the canonical way. The differential $(d\pi_M)_{(x,y)}:T_{(x,y)}\mathcal{S}_M\to T_{x+y}\Lambda$ is then given by \[(d\pi_M)_{(x,y)}(u,v)=u+v\] for $(u,v)=(u_1,u_2,u_3,u_4,v_1,v_2,v_3,v_4)$ with $u_j\in T_{x_j}M$, $v_j\in T_{y_j}M$ for $1\le j\le 4$ and $u_1+v_2=u_3+v_4$, $v_1+u_2=v_3+u_4$. Thus for $(d\pi_M)_{(x,y)}$ to be surjective we certainly need $T_{x_j}M+T_{y_j}M=\reals^d$ for each $1\le j\le 4$. This condition is equivalent to (\ref{regular1}). 

Let $w\in\Lambda$ and assume that (\ref{regular1}) and (\ref{regular2}) hold. It suffices to show that there exist $u_j\in T_{x_j}M$, $v_j\in T_{y_j}M$ for $1\le j\le 4$ such that $u_1+v_2=u_3+v_4$ and $u_j+v_j=w_j$ for $1\le j\le 4$; we then also have \[v_1+u_2=(w_1-u_1)+(w_2-v_2)=(w_3+w_4)-(u_3+v_4)=v_3+u_4.\] 
From the equation $u_j+v_j=w_j$, we can vary $u_j$ freely over a certain translate of the linear space $T_{x_j}M\cap T_{y_j}M =\text{span}\{\mathcal{G}(x_j),\mathcal{G}(y_j)\}^\perp$ and then $v_j=w_j-u_j$ is determined. Alternatively we can vary $v_j$ freely over a certain translate of $\text{span}\{\mathcal{G}(x_j),\mathcal{G}(y_j)\}^\perp$ and then $u_j$ is determined. As $u_1,v_2,u_3,v_4$ vary freely over these affine spaces, condition (\ref{regular2}) implies that $u_1+v_2-u_3-v_4$ varies over all of $\reals^d$. In particular, there exists a choice such that $u_1+v_2=u_3+v_4$.
\end{proof}

 For $S\subset M^2$, let $\mathcal{S}_M(S) = \{(x,y)\in\mathcal{S}_M\,|\, (x_i,y_j)\in S\,\, \text{for all} \,\,1\le i,j\le 4\}$. The above setup is motivated by the following observation.

\begin{lemma}\label{lemma:regroup}
Let $f:M\to\complex$, $g:M\to\complex$ and $h:2M\to\complex$. Suppose that $f(x)g(y)=h(x+y)$ for all $(x,y)\in S\subset M^2$. Whenever $z\in\Lambda$ is in the image of $\mathcal{S}_M(S)$ under $\pi_M$,  \[h(z_1)h(z_2)=h(z_3)h(z_4).\] 
\end{lemma}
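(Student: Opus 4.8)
The plan is to unwind the definitions and use the hypothesis $f(x)g(y)=h(x+y)$ four times, once for each of the four ``paired'' sums that make up a point $z=x+y\in\Lambda$ coming from $\mathcal{S}_M(S)$. Recall that if $(x,y)=(x_1,x_2,x_3,x_4,y_1,y_2,y_3,y_4)\in\mathcal{S}_M(S)$, then by definition $(x_i,y_j)\in S$ for all $1\le i,j\le 4$, and $z=\pi_M(x,y)=x+y\in\Lambda$ has coordinates $z_j=x_j+y_j$. The defining relations of $\Pi$ give $x_1+y_2=x_3+y_4$ and $y_1+x_2=y_3+x_4$. The idea is that the quantity $h(x_1+y_2)=h(x_3+y_4)$ can be evaluated by the functional equation in two ways, and similarly for $h(y_1+x_2)=h(y_3+x_4)$, and then these four evaluations can be recombined into the four diagonal pieces $h(z_j)=h(x_j+y_j)$.

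Concretely, first I would write, using that $(x_1,y_1),(x_1,y_2),(x_3,y_3),(x_3,y_4)\in S$ (all of which hold since $(x,y)\in\mathcal{S}_M(S)$),
\begin{equation}
h(z_1)h(z_3)=h(x_1+y_1)h(x_3+y_3)=f(x_1)g(y_1)f(x_3)g(y_3).
\end{equation}
Rearranging the right-hand side as $\big(f(x_1)g(y_2)\big)\big(f(x_3)g(y_4)\big)\cdot\frac{g(y_1)g(y_3)}{g(y_2)g(y_4)}$ is not quite the cleanest route; instead I would directly compute the product $h(z_1)h(z_2)h(z_3)h(z_4)^{-1}$ — but to avoid division (and the hypothesis only says $f,g$ are non-vanishing on full sets, not everywhere), the cleanest statement to prove is the multiplicative identity $h(z_1)h(z_2)=h(z_3)h(z_4)$ as an equality of complex numbers. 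So I would instead argue: by the functional equation applied to the four pairs $(x_1,y_1)$, $(x_2,y_2)$, $(x_3,y_3)$, $(x_4,y_4)$,
\begin{equation}
h(z_1)h(z_2)=f(x_1)g(y_1)f(x_2)g(y_2),\qquad h(z_3)h(z_4)=f(x_3)g(y_3)f(x_4)g(y_4).
\end{equation}
Then I would reorganize the first right-hand side as $\big(f(x_1)g(y_2)\big)\big(f(x_2)g(y_1)\big)$ and the second as $\big(f(x_3)g(y_4)\big)\big(f(x_4)g(y_3)\big)$, and apply the functional equation again to the pairs $(x_1,y_2),(x_2,y_1),(x_3,y_4),(x_4,y_3)$ — all of which lie in $S$ — to get $h(x_1+y_2)h(x_2+y_1)$ and $h(x_3+y_4)h(x_4+y_3)$ respectively. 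Finally, the relations defining $\Pi$, namely $x_1+y_2=x_3+y_4$ and $x_2+y_1=y_3+x_4$, show these two expressions are equal, yielding $h(z_1)h(z_2)=h(z_3)h(z_4)$.

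The only subtlety — and the one place where care is needed — is that the functional equation is assumed to hold at \emph{every} point of $S$, so as long as all the relevant pairs $(x_i,y_j)$ lie in $S$ (which is exactly the condition built into the definition of $\mathcal{S}_M(S)$), there is no measure-zero issue to worry about at this stage; the pointwise hypothesis is used pointwise. Thus there is no real obstacle here: the lemma is a purely algebraic manipulation of the functional equation together with the two linear constraints cutting out $\Pi$. The genuinely delicate work — ensuring that $\mathcal{S}_M(S)$ and the image $\pi_M(\mathcal{S}_M(S))$ are large enough (in the appropriate measure-theoretic sense) when $S$ is only assumed to be $\mu^2$-full — is what Lemma~\ref{lemma:geomcond} and the transversality/submersion setup are for, and is deferred to the next section; it plays no role in the proof of this particular statement.
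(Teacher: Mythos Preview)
Your proof is correct and follows essentially the same route as the paper: expand $h(z_1)h(z_2)$ and $h(z_3)h(z_4)$ via the functional equation, regroup the factors as $\big(f(x_1)g(y_2)\big)\big(f(x_2)g(y_1)\big)$ and $\big(f(x_3)g(y_4)\big)\big(f(x_4)g(y_3)\big)$, reapply the equation, and invoke the defining relations of $\Pi$. The only cosmetic difference is that the paper omits your initial detour through $h(z_1)h(z_3)$ and goes straight to the regrouping.
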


\begin{proof}
Consider $z=\pi_M(x,y) = x+y$ for $(x,y)\in\mathcal{S}_M(S)$. By expanding using (\ref{Cauchy}) and regrouping, \[h(z_1)h(z_2)=f(x_1)g(y_1)f(x_2)g(y_2)=h(x_1+y_2)h(y_1+x_2)\] and similarly \[h(z_3)h(z_4)=f(x_3)g(y_3)f(x_4)g(y_4)=h(x_3+y_4)h(y_3+x_4).\] But $x_1+y_2=x_3+y_4$ and $y_1+x_2=y_3+x_4$.
\end{proof}

Define $R_M=\{x+y\,\, |\,\, x,y\in M,\,\, \overline{\mathcal{G}}(x)\ne\overline{\mathcal{G}}(y)\}$.

\begin{lemma}  The set $R_M$ is an open subset of $\reals^d$. If $M$ is nowhere-flat then $R_M$ is dense in $2M$.\end{lemma}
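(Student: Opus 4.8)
The plan is to handle the two assertions separately. For openness, I would realise $R_M$ as the image under the addition map of an open subset of $M \times M$. Consider the set $W = \{(x,y) \in M \times M \,:\, \overline{\scriptg}(x) \ne \overline{\scriptg}(y)\}$. Since $\overline{\scriptg} : M \to \overline{\mathbb{S}^{d-1}}$ is continuous and $\overline{\mathbb{S}^{d-1}}$ is Hausdorff, $W$ is open in $M \times M$. If $(x,y) \in W$, then the tangent planes $T_xM$ and $T_yM$ are distinct hyperplanes, so $T_xM + T_yM = \reals^d$; hence the addition map $a : M \times M \to \reals^d$, $a(x,y) = x+y$, is a submersion at $(x,y)$ (its differential is $(u,v) \mapsto u+v$, which is surjective as soon as the two tangent spaces span). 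A submersion is an open map, so $a(W) = R_M$ is open in $\reals^d$. That disposes of the first claim.

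For density, fix an arbitrary point $z_0 \in 2M$, say $z_0 = x_0 + y_0$ with $x_0, y_0 \in M$, and fix $\eps > 0$; I must produce a point of $R_M$ within $\eps$ of $z_0$. The idea is that if $\overline{\scriptg}(x_0) \ne \overline{\scriptg}(y_0)$ we are already done since $z_0 \in R_M$, so assume $\overline{\scriptg}(x_0) = \overline{\scriptg}(y_0)$, i.e. $T_{x_0}M = T_{y_0}M$ as linear hyperplanes. Now invoke nowhere-flatness at $x_0$: on some orientable neighbourhood $U \ni x_0$ the differential $(d\scriptg^U)_{x_0}$ is non-zero, so the Gauss map is non-constant near $x_0$; concretely, there is a smooth curve $t \mapsto x(t)$ in $M$ with $x(0) = x_0$ and $\overline{\scriptg}(x(t)) \ne \overline{\scriptg}(x_0)$ for all small $t \ne 0$ (choose the curve in a direction along which $(d\scriptg^U)_{x_0}$ does not vanish — such a direction exists precisely because the differential is non-zero). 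For such $t \ne 0$ we have $\overline{\scriptg}(x(t)) \ne \overline{\scriptg}(x_0) = \overline{\scriptg}(y_0)$, hence $x(t) + y_0 \in R_M$; and $x(t) + y_0 \to x_0 + y_0 = z_0$ as $t \to 0$. Choosing $t$ small enough that $|x(t) + y_0 - z_0| < \eps$ completes the argument. Since $z_0 \in 2M$ and $\eps > 0$ were arbitrary, $R_M$ is dense in $2M$.

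The only point requiring a little care — and the place I would expect to spend the most effort — is the passage from "$(d\scriptg^U)_{x_0} \ne 0$" to "there is a tangent direction $\xi \in T_{x_0}M$ along which $\scriptg$ moves", i.e. producing the curve $x(t)$ with $\overline{\scriptg}(x(t))$ genuinely leaving the class of $\overline{\scriptg}(x_0)$ for small $t \ne 0$. This is where nowhere-flatness is used and nowhere else. Since $(d\scriptg^U)_{x_0} : T_{x_0}M \to T_{\scriptg(x_0)}\mathbb{S}^{d-1}$ is a non-zero linear map, pick $\xi \in T_{x_0}M$ with $(d\scriptg^U)_{x_0}\xi \ne 0$ and let $x(t)$ be any curve in $M$ with $x(0)=x_0$, $\dot x(0) = \xi$; then $\tfrac{d}{dt}\big|_{t=0}\scriptg^U(x(t)) = (d\scriptg^U)_{x_0}\xi \ne 0$, so $\scriptg^U(x(t)) \ne \scriptg^U(x_0)$ for all sufficiently small $t \ne 0$ (a nonzero derivative forces the value to change), and shrinking the range of $t$ further if necessary we also keep $\scriptg^U(x(t))$ away from $-\scriptg^U(x_0)$ by continuity, which is exactly $\overline{\scriptg}(x(t)) \ne \overline{\scriptg}(x_0)$. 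Everything else is the standard fact that a submersion is open and the Hausdorffness of $\overline{\mathbb{S}^{d-1}}$.
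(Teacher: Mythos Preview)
Your proof is correct and follows essentially the same approach as the paper: both identify $R_M$ as the image of the regular points of the addition map (equivalently, the open set $W$ where $\overline{\mathcal{G}}(x)\ne\overline{\mathcal{G}}(y)$) under a submersion, and both deduce density from the fact that nowhere-flatness prevents $\mathcal{G}$ from being constant on any open set. The paper's proof is a terse three sentences; your curve argument with the nonvanishing derivative of $\mathcal{G}$ simply spells out why $\overline{\mathcal{G}}$ is non-constant near $x_0$, which the paper takes as the immediate content of nowhere-flatness.
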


\begin{proof}
The addition map $\alpha:M^2\to\mathbb{R}^d$, $(x,y)\mapsto x+y$ is a submersion at $(x,y)$ if and only if $T_xM+T_yM=\mathbb{R}^d$. So $R_M$ is the image of the regular points of $\alpha$. If $M$ is nowhere-flat then $\mathcal{G}$ is not constant on any open subset of $M$.
\end{proof}

From the proof of Lemma~\ref{lemma:geomcond}, it follows that condition (\ref{regular1}) is also necessary for $(x,y)$ to be a regular point of $\pi_M$. Thus we can define a submersion $\gamma_M:\mathcal{R}_M\to R_M$ given by $(x,y)\mapsto x_1+y_1$. The following proposition describes the main property of $\gamma_M$ we will need; the proof will be given in the next section.

\begin{proposition}\label{prop:gammasurjective}  Let $M$ be nowhere-flat and cylinder-free. Then the submersion $\gamma_M:\mathcal{R}_M\to R_M$ is surjective.\end{proposition}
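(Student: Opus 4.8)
The plan is to reduce the whole statement to a single, completely explicit construction. Fix $z\in R_M$ and, using the definition of $R_M$, write $z=a+b$ with $a,b\in M$ and $\overline{\mathcal G}(a)\ne\overline{\mathcal G}(b)$. Since $\gamma_M(x,y)=x_1+y_1$, it suffices to produce one point $(x,y)\in\mathcal R_M$ with $x_1=a$ and $y_1=b$; and by Lemma~\ref{lemma:geomcond} it is enough to find $x_2,x_3,x_4,y_2,y_3,y_4\in M$ so that the resulting eight-tuple lies in $\mathcal P_M$ and satisfies (\ref{smooth}), (\ref{regular1}) and (\ref{regular2}). (Recall that $M$ has been taken orientable here, so $\mathcal G$ is globally defined.)

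First I would isolate the geometric fact that does the real work: \emph{if $M$ is cylinder-free, then for every $2$-dimensional linear subspace $P\subset\reals^d$ the set $\{x\in M:\mathcal G(x)\in P\}$ has empty interior in $M$.} Indeed, if $\mathcal G(U)\subset P$ on a nonempty open $U\subset M$, then the $(d-2)$-dimensional subspace $P^{\perp}$ lies in $T_xM$ for every $x\in U$; the constant coordinate vector fields spanning $P^{\perp}$ are then tangent to $U$, their commuting flows foliate a neighbourhood of any point of $U$ by pieces of $(d-2)$-planes, and slicing transversally by a translate of $P$ realizes that neighbourhood, after a rigid motion, as an open subset of $\Gamma\times\reals^{d-2}$ for a smooth plane curve $\Gamma$ — a cylinder, which is excluded. (Only cylinder-freeness is needed here; it also rules out $\mathcal G$ being constant on an open set, since a flat open piece is a cylinder with $\Gamma$ a line.)

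Granting this, I would choose auxiliary points $e,c\in M$ with
\[
\mathcal G(e)\notin\operatorname{span}\{\mathcal G(a),\mathcal G(b)\},\qquad
\mathcal G(c)\notin\operatorname{span}\{\mathcal G(a),\mathcal G(b)\}\cup\operatorname{span}\{\mathcal G(a),\mathcal G(e)\}.
\]
This is possible because each excluded set is of the form $\{x:\mathcal G(x)\in P\}$ with $P$ a $2$-plane, hence closed with empty interior, so any finite union of them is nowhere dense. Then I would set $(x,y)=\bigl((a,c,e,c),(b,e,b,a)\bigr)$. Directly, $x_1+y_2=a+e=x_3+y_4$ and $y_1+x_2=b+c=y_3+x_4$, so $(x,y)\in\mathcal P_M$, with $x_1=a$, $y_1=b$. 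It then remains to verify (\ref{smooth}), (\ref{regular1}) and (\ref{regular2}) for this tuple, which is elementary linear algebra in $\mathcal G(a),\mathcal G(b),\mathcal G(c),\mathcal G(e)$. For instance, in (\ref{regular2}) the four planes are $\operatorname{span}\{\mathcal G(a),\mathcal G(b)\}$, $\operatorname{span}\{\mathcal G(c),\mathcal G(e)\}$, $\operatorname{span}\{\mathcal G(e),\mathcal G(b)\}$, $\operatorname{span}\{\mathcal G(c),\mathcal G(a)\}$; the first and last both contain $\mathcal G(a)$ and are distinct (as $\mathcal G(c)\notin\operatorname{span}\{\mathcal G(a),\mathcal G(b)\}$), hence meet exactly in $\operatorname{span}\{\mathcal G(a)\}$, while $\mathcal G(a)\notin\operatorname{span}\{\mathcal G(e),\mathcal G(b)\}$ (otherwise $\mathcal G(e)\in\operatorname{span}\{\mathcal G(a),\mathcal G(b)\}$), so the fourfold intersection is $\{0\}$; the non-antipodality conditions in (\ref{smooth}) and (\ref{regular1}) reduce similarly to the displayed genericity of $e$ and $c$. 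By Lemma~\ref{lemma:geomcond} then $(x,y)\in\mathcal R_M$, and $\gamma_M(x,y)=a+b=z$.

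The main obstacle is the geometric fact in the second paragraph: pinning down precisely how ``contains no cylinder'' forces the Gauss image to be nowhere locally contained in a $2$-plane. Once that is available, the rest is bookkeeping — selecting $e$ and $c$ off finitely many nowhere dense sets and checking the hypotheses of Lemma~\ref{lemma:geomcond} for the fixed eight-tuple above — and, incidentally, nowhere-flatness plays no role in this particular proposition, though it is available.
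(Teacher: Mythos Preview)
Your argument is correct, and it is considerably simpler than the paper's. Both proofs begin from the same geometric fact --- that cylinder-freeness forces $\mathcal G(U)$ to avoid every $2$-plane for open $U\subset M$ --- which is exactly the paper's Lemma~\ref{lemma:integcurve} (the foliation argument you sketch is essentially the paper's proof of that lemma). From there the two routes diverge. The paper introduces the auxiliary set $D_M=\{w\in R_M:\mathcal G(M_w)\text{ not in a planar circle}\}$, proves it is open and dense (Lemma~\ref{lemma:Ddense}), uses this to locate $x_1,y_1,y_2$ with $x_1+y_2\in D_M$, then picks $x_3,y_4$ on the transversal intersection $M_{x_1+y_2}$, and finally needs an extra perturbation lemma to secure $\overline{\mathcal G}(x_3)\ne\overline{\mathcal G}(y_3)$. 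You bypass all of this with the repetition/swap trick $(x,y)=((a,c,e,c),(b,e,b,a))$: setting $(x_3,y_4)=(y_2,x_1)$ and $(y_3,x_4)=(y_1,x_2)$ makes the $\Pi$-constraints tautological, so the problem reduces to choosing two points $e,c$ off a finite union of closed nowhere-dense sets and then checking the hypotheses of Lemma~\ref{lemma:geomcond} by linear algebra. Your verification of \eqref{regular2} via $P_1\cap P_4=\operatorname{span}\{\mathcal G(a)\}$ and $\mathcal G(a)\notin P_3$ is clean and correct. What you gain is brevity and the elimination of two auxiliary lemmas; what the paper's route offers is perhaps only the density statement for $D_M$, which is not reused elsewhere. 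Your closing remark that nowhere-flatness plays no independent role here is also right: your version of the geometric fact does not invoke it, whereas the paper's proof of Lemma~\ref{lemma:integcurve} does.
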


If $\pi:P\to N$ is smooth and $\rho$ is a measure on $P$, write $\pi_*(\rho)$ for the pushforward of $\rho$ under $\pi$, that is the measure on $N$ defined by $\pi_*(\rho)(E)=\rho(\pi^{-1}(E))$. The following fact will be useful; see, for example, \cite{ponomarev} for a proof.

\begin{lemma}\label{lemma:abscont}
Suppose that $\pi:P\to N$ is a surjective submersion between bounded submanifolds $P\subset\reals^p$ and $N\subset\reals^n$. Let $\rho$ and $\nu$ denote the measures associated to the induced Riemannian structures on $P$ and $N$ respectively. Then $\nu$ and $\pi_*(\rho)$ are mutually absolutely continuous.
\end{lemma}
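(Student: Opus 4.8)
The plan is to prove the two defining implications of mutual absolute continuity, namely that for every Borel set $E\subseteq N$ one has $\nu(E)=0$ if and only if $\pi_*(\rho)(E)=\rho(\pi^{-1}(E))=0$, by reducing to a local computation via the submersion normal form and Fubini's theorem, and then globalizing over a countable cover. Since $P$ and $N$ are submanifolds of Euclidean space they are second countable and locally compact, hence $\sigma$-compact. For each $x\in P$ the submersion theorem supplies a \emph{product chart}: open sets $U\ni x$ in $P$ and $V=\pi(U)\ni\pi(x)$ in $N$, together with coordinate diffeomorphisms $\phi:U\to\Omega'\times W$ and $\psi:V\to\Omega'$, where $\Omega'\subseteq\reals^n$ and $W\subseteq\reals^{p-n}$ are open boxes, in which $\pi$ becomes the coordinate projection $\mathrm{pr}:(u,w)\mapsto u$; in particular every fibre of $\pi|_U$ is, in coordinates, a translate of $W$. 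Taking such charts with compact closure, $\sigma$-compactness lets me extract a countable subfamily $\{U_i\}_i$ covering $P$; since $\pi$ is surjective, the images $\{V_i=\pi(U_i)\}$ then cover $N$.

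First I would observe that on each chart the Riemannian measures are comparable to Lebesgue measure: in the coordinates $\phi,\psi$ the measures $\rho$ and $\nu$ have densities $\sqrt{\det g}$ and $\sqrt{\det g'}$ respectively, each a smooth, strictly positive function, hence bounded above and below on the compact closures. Thus $\phi_*(\rho|_U)$ is comparable to $\lambda^p|_{\Omega'\times W}$ and $\psi_*(\nu|_V)$ is comparable to $\lambda^n|_{\Omega'}$. The problem therefore reduces to the pushforward of Lebesgue measure under the projection, and here Fubini's theorem gives, for every Borel $A\subseteq\Omega'$,
\[
\lambda^p\big(\mathrm{pr}^{-1}(A)\cap(\Omega'\times W)\big)=\lambda^{p-n}(W)\,\lambda^n(A).
\]
Because $W$ is a nonempty bounded open box, the factor $\lambda^{p-n}(W)$ lies in $(0,\infty)$, so $\mathrm{pr}_*(\lambda^p|_{\Omega'\times W})$ is a positive multiple of $\lambda^n|_{\Omega'}$. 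Combining this with the comparabilities above, $\pi_*(\rho)$ and $\nu$ have exactly the same null sets within each $V_i$.

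It then remains to globalize both implications. If $\nu(E)=0$ then $\nu(E\cap V_i)=0$ for all $i$, so by the local step $\rho(\pi^{-1}(E)\cap U_i)=0$, and countable subadditivity yields $\rho(\pi^{-1}(E))\le\sum_i\rho(\pi^{-1}(E)\cap U_i)=0$; this proves $\pi_*(\rho)\ll\nu$. Conversely, if $\nu(E)>0$ then, since $\{V_i\}$ covers $N$, there is an index $i$ with $\nu(E\cap V_i)>0$; the local step then forces $\rho(\pi^{-1}(E)\cap U_i)>0$, whence $\pi_*(\rho)(E)=\rho(\pi^{-1}(E))>0$. This proves $\nu\ll\pi_*(\rho)$ and completes the argument.

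The step demanding the most care, and the only place the \emph{surjectivity} of $\pi$ is used, is the reverse implication $\nu\ll\pi_*(\rho)$: it relies on the local density $\lambda^{p-n}(W)$ being strictly positive and, crucially, on the chart images $\{V_i\}$ exhausting $N$, so that any positive $\nu$-mass of $E$ can be detected inside some single chart whose fibres are genuinely nonempty. Were $\pi$ not surjective, $\nu$ would charge $N\setminus\pi(P)$ while $\pi_*(\rho)$ would not, and mutual absolute continuity would fail. I note that the whole lemma also follows in one stroke from the smooth coarea formula: a submersion has everywhere-positive normal Jacobian $J\pi$, and the coarea formula expresses $\pi_*(\rho)=\Psi\,\nu$ with $\Psi(z)=\int_{\pi^{-1}(z)}(J\pi)^{-1}\,d\mathcal H^{p-n}$, which is strictly positive for every $z\in N$ precisely because surjectivity makes each fibre nonempty; since $\Psi>0$ throughout, $\pi_*(\rho)$ and $\nu$ share their null sets.
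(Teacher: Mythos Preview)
The paper does not actually supply a proof of this lemma; it is stated as a known fact with a reference to Ponomarev \cite{ponomarev}. Your argument is correct and is the standard one: local submersion charts reduce $\pi$ to a coordinate projection, on each chart the Riemannian volume densities are smooth and strictly positive and hence comparable to Lebesgue measure on the (relatively compact) chart domain, Fubini handles the projection, and second countability allows you to globalize over a countable cover. Your identification of where surjectivity enters---to ensure the chart images $V_i=\pi(U_i)$ exhaust $N$, so that any positive $\nu$-mass is seen by some chart---is exactly right. The coarea-formula alternative you sketch at the end is equally valid and is essentially what the cited reference provides; either route suffices, and neither is materially different in spirit from the other.
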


Observe that for each $1\le i,j\le 4$, the smooth map $\mathcal{R}_M\to M^2$ given by $(x,y)\mapsto(x_i,y_j)$ is a submersion. Thus, by Lemma~\ref{lemma:abscont}, if $S\subset M^2$ is $\mu^2$-full then \begin{equation}\label{Risfull}\sigma(\mathcal{R}_M\backslash\mathcal{R}_M\cap\mathcal{S}_M(S))=0.\end{equation}

Let $M$ be nowhere-flat and cylinder-free. The addition map $M^2\to\reals^d$ restricts to a surjective submersion $\alpha_M:\mathcal{M}\to R_M$ where $\mathcal{M}$ is the open subset of $M^2$ consisting of points $(x,y)$ such that $\overline{\mathcal{G}}(x)\ne\overline{\mathcal{G}}(y)$. The measure $\mu^2|_{\mathcal{M}}$ coincides with the induced measure on $\mathcal{M}$ as a submanifold of $(\reals^d)^2$. Since $\mu^2(M^2\backslash\mathcal{M})=0$, Lemma~\ref{lemma:abscont} implies that 
\begin{equation}\label{convequalsleb}
\lambda^d|_{R_M}\text{ and }(\mu*\mu)|_{R_M}\text{ are mutually absolutely continuous}
\end{equation}
where $\mu*\mu$ denotes the measure given by $\mu*\mu(E)=\mu^2(\{(x,y)\in M^2\,\,|\,\,x+y\in E\})$.

Suppose that $f:M\to\complex$, $g:M\to\complex$ are measurable functions which only vanish on sets of $\mu$-measure zero and $h:2M\to\complex$ is measurable. By (\ref{convequalsleb}), $h$ vanishes only on a set of $\lambda^d$-measure zero. 

Suppose that (\ref{Cauchy}) is satisfied for some $S\subset M^2$ with $\mu^2(M^2\backslash S) = 0$. Let $z_1\in R_M$. By Proposition~\ref{prop:gammasurjective}, there exists $(x,y)\in\mathcal{R}_M$ such that $z_1=\gamma_M(x,y)$. Write $z=(z_1,z_2,z_3,z_4)=\pi_M(x,y)\in\Lambda$. Since $\pi_M$ is a submersion at $(x,y)$, Lemma~\ref{lemma:regroup} together with (\ref{Risfull}) imply that for $\lambda^{3d}$-a.e. $w\in\Lambda$ in an open neighbourhood $V$ of $z$, \[h(w_1)h(w_2)=h(w_3)h(w_4).\] By shrinking $V$ if necessary, we may assume that $V=(T_1\times T_2\times T_3\times T_4)\cap\Lambda$ where each $T_j$ is an open connected subset of $R_M$, $T_1$ is a ball with center at $z_1$, $T_2$ is a translate of $T_1$ and the projection $\Lambda\to(\reals^d)^2$ given by $(w_1,w_2,w_3,w_4)\mapsto(w_1,w_2)$ maps $V$ onto $T_1\times T_2$. Consider the set $\mathcal{T}$ of all tuples $(w_1,w_2,w_1',w_2',w_3,w_4)$ where $w_1,w_1'\in T_1$, $w_2,w_2'\in T_2$, $w_3\in T_3$, $w_4\in T_4$ and $w_1+w_2=w_1'+w_2'=w_3+w_4$. Then $\mathcal{T}$ is an open subset of a $4d$-dimensional linear space. The projections given by $(w_1,w_2,w_1',w_2',w_3,w_4)\mapsto(w_1,w_2,w_3,w_4)$ and $(w_1,w_2,w_1',w_2',w_3,w_4)\mapsto(w_1',w_2',w_3,w_4)$ are surjective submersions $\mathcal{T}\to V$. Therefore, by Lemma~\ref{lemma:abscont}, for $\lambda^{3d}$-a.e. $(w_1,w_2,w_1',w_2')\in (T_1\times T_2\times T_1\times T_2)\cap\Lambda$, \[h(w_1)h(w_2)=h(w_1')h(w_2').\] By setting $H(u)$ to be the average of $h(w_1)h(w_2)$ over pairs $(w_1,w_2)\in T_1\times T_2$ satisfying $w_1+w_2=u$, we obtain a measurable function $H:T_1+T_2\to\complex$ such that $H(w_1+w_2)=h(w_1)h(w_2)$ for $\lambda^{2d}$-a.e. $(w_1,w_2)\in T_1\times T_2$. Applying Lemma~\ref{lemma:classical}, we deduce that there exist $c\ne 0$ and $v\in\complex^d$ such that $h(w)=c\exp(v\cdot w)$ for $\lambda^d$-a.e. $w\in T_1$. 

Thus we have shown that for each $z\in R_M$, there exist an open connected neighbourhood $B_z\subset R_M$ of $z$, $v_z\in\complex^d$ and $c_z\ne 0$ such that $h(w)=c_z\exp(v_z\cdot w)$ for $\lambda^d$-a.e. $w\in B_z$. Define an equivalence relation $\sim$ on $R_M$ by declaring $w \sim z$ whenever $v_w = v_z$. Observe that if $z_0\in R_M$ then $z \sim z_0$ for any $z\in B_{z_0}$. Therefore, the equivalence relation partitions $R_M$ into a collection of open sets $\{R_j\}_{j\in J}$ such that each $z\in R_j$ has a neighbourhood on which $h$ is of the form $h(z) = c\exp(v_j\cdot z)$, up to a null set, for some $c$ depending on the neighbourhood. 

For $j\in J$, consider the set $M_j$ of all $x\in M$ for which there exists some $y\in M$ such that $x+y\in R_j$. Since $M$ is nowhere-flat, it follows that the sets $\{M_j\}_{j\in J}$ are an open cover for $M$. Furthermore, the sets are disjoint. Indeed, suppose that the open set $M_i\cap M_j$ is non-empty. Then there are $y_1, y_2\in M$ and an open subset $U$ of $M$ such that $U+y_1\in R_i$, $U+y_2\in R_j$, $g(y_1),g(y_2)\ne 0$ and the restricted Cauchy-Pexider equation (\ref{Cauchy}) is satisfied for $S=U_1\times\{y_1,y_2\}$ where $U_1$ is a subset of $U$ with full $\mu$-measure. By shrinking $U$ if necessary, for $\mu$-a.e. $x\in U$, \[g(y_1)^{-1}c\exp(v_i\cdot x)=f(x)=g(y_2)^{-1}c'\exp(v_j\cdot x)\] for some constants $c,c'\ne 0$. By Lemma~\ref{lemma:localise}, this implies that $v_i=v_j$ so $M_i=M_j$.

Since $M$ is connected, it follows that $M_j=M$ for some $j\in J$. This means that each $x\in M$ has an open neighbourhood $U$ in $M$ on which $f$ is of the form $f(x) = c_U\exp(v_j\cdot x)$ except for a $\mu$-null set for some constant $c_U\ne 0$. Appealing to the assumption that $M$ is connected once again, it follows that there is a constant $c_1\ne 0$ such that $f(x) = c_1\exp(v_j\cdot x)$ for $\mu$-almost every all $x\in M$. Similarly, $g(y) = c_2(v_j\cdot y)$ for $\mu$-almost every $y\in M$. 

Thus the proof of Theorem~\ref{theorem:mainmult} will be complete once we establish Proposition~\ref{prop:gammasurjective}.  Lemmata~\ref{lemma:localise}~and~\ref{lemma:regroup} have straighforward analogues for the additive Cauchy-Pexider functional equation with similar proofs; the details are omitted. By combining these with the additive analogue of Lemma~\ref{lemma:classical} (see \cite{christyoungs}) and running through the argument above, we obtain Theorem~\ref{theorem:mainadd}.
\qed

\section{The Proof of Proposition~\ref{prop:gammasurjective}}

\begin{definition}
A \emph{planar circle} in $\mathbb{S}^{d-1}$ is the intersection in $\mathbb{R}^d$ of a two-dimensional linear subspace with $\mathbb{S}^{d-1}$.
\end{definition}

\begin{lemma}\label{lemma:integcurve}
Suppose that $M\subset\reals^d$ is a nowhere-flat hypersurface. Then $M$ is cylinder-free if and only if whenever $U$ is an open subset of $M$, the set $\mathcal{G}(U)\subset\mathbb{S}^{d-1}$ is not contained in a planar circle.
\end{lemma}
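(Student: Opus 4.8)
The plan is to prove both directions by relating the Gauss map to the shape operator. Recall that for a nowhere-flat hypersurface $M$ the differential $(d\mathcal{G})_x$ never vanishes, so $\mathcal{G}$ has rank $1$ or $2$ at each point and its image is locally a curve or surface in $\mathbb{S}^{d-1}$.

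\begin{proof}[Proof sketch]
First I would prove the contrapositive of the ``only if'' direction: if some open $U\subset M$ has $\mathcal{G}(U)$ contained in a planar circle $C=V\cap\mathbb{S}^{d-1}$ with $V$ a two-dimensional linear subspace, then $U$ contains a cylinder. The key observation is that the normal $\mathcal{G}(x)$ lies in $V$ for all $x\in U$, so the $(d-2)$-dimensional subspace $V^\perp$ is contained in $T_xM$ for every $x\in U$. This means $V^\perp$ is a fixed family of directions tangent to $M$ everywhere on $U$; integrating, each point of $U$ lies in an affine translate of $V^\perp$ contained in $U$, so (after shrinking $U$) $U$ splits as a product $\Gamma\times W$ where $W$ is an open subset of $V^\perp\cong\reals^{d-2}$ and $\Gamma$ is a plane curve lying in an affine $2$-plane parallel to $V$. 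After a rigid motion this is exactly an open subset of $\Gamma\times\reals^{d-2}$, i.e.\ a cylinder; nowhere-flatness forces $\Gamma$ to have nonvanishing curvature so it is genuinely a curve and not a line segment. Conversely, for the ``if'' direction, if $U$ is (an open subset of) a cylinder $\Gamma\times\reals^{d-2}$, then the normal at a point $(s,t)$ depends only on $s$ and is parallel to the plane containing $\Gamma$; hence $\mathcal{G}(U)$ lies in $V\cap\mathbb{S}^{d-1}$ where $V$ is the $2$-plane spanned by the plane of $\Gamma$, a planar circle. This handles the trivial direction.

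The substantive content is making the integration step rigorous. I would argue as follows. Fix $x_0\in U$ with $\mathcal{G}(U)\subset C=V\cap\mathbb{S}^{d-1}$. For each $x\in U$ we have $\mathcal{G}(x)\in V$, hence $V^\perp\subseteq \mathcal{G}(x)^\perp=T_xM$. Thus the constant distribution $x\mapsto V^\perp$ is a smooth $(d-2)$-dimensional sub-bundle of $TM$ over $U$; being constant it is trivially involutive, so by Frobenius its integral manifolds foliate a neighbourhood of $x_0$. But the integral manifolds of a constant distribution in $\reals^d$ are precisely (pieces of) affine subspaces parallel to $V^\perp$. So near $x_0$, $M$ is a union of open pieces of affine $(d-2)$-planes parallel to $V^\perp$. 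Choosing coordinates adapted to $V\oplus V^\perp=\reals^d$ and applying the implicit function theorem, one writes $M$ locally as a graph over an open set in $V$ (the last $d-2$ coordinates free), and the graphing function is independent of the $V^\perp$-coordinates; this exhibits the local piece as $\Gamma\times W$ with $\Gamma\subset V+x_0$ a curve and $W\subset V^\perp$ open. Nowhere-flatness of $M$ translates to $(d\mathcal{G})$ nonzero, which on this product means the curve $\Gamma$ has a nonzero curvature vector, so $\Gamma$ is a bona fide plane curve; thus this local piece is, after a rigid motion, an open subset of $\Gamma\times\reals^{d-2}$, a cylinder.

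The main obstacle I anticipate is bookkeeping rather than conceptual: one must be careful that ``$\mathcal{G}(U)$ contained in a planar circle'' really does pin down a \emph{single} two-plane $V$ (it does, provided $\mathcal{G}(U)$ contains at least two distinct points, which follows from nowhere-flatness since $\mathcal{G}$ is non-constant on any open set), and one must check that the resulting curve $\Gamma$ is \emph{embedded} and smooth so that ``cylinder'' in the sense of the paper's definition genuinely applies — this uses that $M$ itself is an embedded submanifold and that the product decomposition is a diffeomorphism onto its image. A secondary point is to confirm that the cylinder one produces is an \emph{open subset} of some $\Gamma\times\reals^{d-2}$ up to rigid motion even though $\Gamma$ and $W$ are only local; but the definition of cylinder already allows open subsets, so shrinking $U$ is harmless. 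No deeper ideas are needed beyond Frobenius and the elementary geometry of the Gauss map.
\end{proof}
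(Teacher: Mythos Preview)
Your proposal is correct and follows essentially the same route as the paper: both arguments observe that $\mathcal{G}(U)\subset V$ forces $V^\perp\subset T_xM$ for every $x\in U$, then integrate these constant tangent directions to obtain affine $(d-2)$-flats in $M$ and hence a local product $\Gamma\times W$. The only cosmetic difference is that the paper phrases the integration step via principal directions (noting there is exactly one nonzero principal curvature, whose direction together with $\mathcal{G}(x)$ spans $V$) and integral curves of constant vector fields, whereas you invoke Frobenius for the constant distribution $V^\perp$; these are the same argument in different language.
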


\begin{proof}
Suppose that $\mathcal{G}(U)$ is contained in the planar circle $\Pi\cap\mathbb{S}^{d-1}$ where $\Pi\subset\reals^d$ is a two-dimensional plane. Since $M$ is nowhere-flat, for each $x\in U$ there is exactly one non-zero principal curvature with principal direction $p_x$, say, and $\Pi=\text{span}\{\mathcal{G}(x),p_x\}$. Each direction $v\in\Pi^\perp$ then defines a unit vector field of constant direction $v$ on $U$. Thus there is an integral curve which is a line segment in the direction $v$ through each $x\in U$. Choose $x_0\in U$ and consider the smooth curve $\Gamma=(x_0+\Pi)\cap U$ through $x_0$. There is a line segment in each direction $v\in\Pi^\perp$ at each point $x\in\Gamma$ which lies entirely in $U$ and moreover these line segments can be chosen to vary smoothly with $x$ and $v$. Thus, $U$ contains a cylinder. 

The converse follows from the observation that $\mathcal{G}(U)$ is contained in a planar circle whenever $U$ is a cylinder.
\end{proof}

Let $M$ be a connected, nowhere-flat, cylinder-free hypersurface. For each $w\in R_M$, the submanifolds $M$ and $w-M$ of $\mathbb{R}^d$ intersect transversally at a point $x$ if and only if $\overline{\mathcal{G}}(x)\ne\overline{\mathcal{G}}(w-x)$. Write $M_w\subset M\cap(w-M)$ for the set of points where $M$ meets $w-M$ transversally. Then $M_w$ is a non-empty submanifold of $\mathbb{R}^d$ of codimension $2$. Define $D_M=\{w\in R_M\,\, |\,\, \mathcal{G}(M_w) \text{ is not contained in a planar circle} \}$. 

\begin{lemma}\label{lemma:Ddense}
Let $M$ be nowhere-flat and cylinder-free. Then $D_M$ is open and dense in $R_M$.
\end{lemma}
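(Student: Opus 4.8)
The plan is to treat openness and density separately, both by hand.

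\emph{Openness.} Let $w_{0}\in D_{M}$. Since $\mathcal{G}(M_{w_{0}})$ is not contained in a planar circle, choose $x_{1},x_{2},x_{3}\in M_{w_{0}}$ with $\mathcal{G}(x_{1}),\mathcal{G}(x_{2}),\mathcal{G}(x_{3})$ linearly independent. For each $i$, transversality of $M$ and $w_{0}-M$ at $x_{i}$ means the addition map $M\times M\to\reals^{d}$ is a submersion at $(x_{i},w_{0}-x_{i})$, hence admits a smooth local section through that point; because transversality is an open condition and $\mathcal{G}$ is continuous, for $w$ near $w_{0}$ these sections produce points $x_{i}(w)\in M_{w}$ with $\mathcal{G}(x_{i}(w))$ near $\mathcal{G}(x_{i})$, hence still linearly independent. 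So $w\in D_{M}$, and $D_{M}$ is open.

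\emph{Density, setup.} Suppose $D_{M}$ were not dense in $R_{M}$. Then there is a non-empty open $W\subseteq R_{M}$ such that for each $w\in W$ the set $\mathcal{G}(M_{w})$ lies in a two-dimensional linear subspace; fixing any $x\in M_{w}$, transversality forces this subspace to be $\Pi_{w}:=\operatorname{span}\{\mathcal{G}(x),\mathcal{G}(w-x)\}$, so $\Pi_{w}$ is well defined. The key structural fact is that $M_{w}$ is \emph{flat}: for $x\in M_{w}$ one has $w-x\in M_{w}$ and
\[
T_{x}M_{w}=T_{x}M\cap T_{w-x}M=\operatorname{span}\{\mathcal{G}(x),\mathcal{G}(w-x)\}^{\perp}=\Pi_{w}^{\perp},
\]
using transversality for the independence of $\mathcal{G}(x),\mathcal{G}(w-x)$. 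Thus every component of $M_{w}$ is an open subset of an affine translate of $\Pi_{w}^{\perp}$, and the second fundamental form of $M$ vanishes on $\Pi_{w}^{\perp}$ at each point of $M_{w}$; equivalently, writing $d\mathcal{G}_{x}$ for the symmetric endomorphism of $T_{x}M$, we get $d\mathcal{G}_{x}(\Pi_{w}^{\perp})\subseteq\Pi_{w}\cap T_{x}M=\reals\xi$, where $\xi$ is a unit vector spanning the line $\Pi_{w}\cap T_{x}M$. Now an elementary fact about symmetric operators enters: a non-zero symmetric operator $A$ on a Euclidean space of dimension $\ge 2$ satisfying $A(\xi^{\perp})\subseteq\reals\xi$ for a unit vector $\xi$ must have rank $1$ or $2$, and there are at most two such $\xi$ up to sign; moreover if $\operatorname{rank}A=1$ then $\xi$ is forced to span $\operatorname{Im}A$, hence is unique. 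Applying this with $A=d\mathcal{G}_{x}$ (non-zero by nowhere-flatness) shows that $\operatorname{rank}d\mathcal{G}_{x}\in\{1,2\}$ and that whenever $x\in M_{w}$ with $w\in W$, the plane $\Pi_{w}$ equals $\reals\mathcal{G}(x)\oplus\reals\xi^{(i)}(x)$ for one of the at most two admissible directions $\xi^{(i)}(x)$ of $d\mathcal{G}_{x}$ — so $\Pi_{w}$ depends on $x$ only up to a binary choice.

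\emph{Density, bootstrap and conclusion.} Set $U:=\bigcup_{w\in W}M_{w}$, a non-empty open subset of $M$, and for $x_{0}\in U$ set $N_{x_{0}}:=\{y\in M:\overline{\mathcal{G}}(y)\neq\overline{\mathcal{G}}(x_{0}),\ x_{0}+y\in W\}$, which is open in $M$, contained in $U$, and non-empty since $x_{0}\in M_{w_{1}}$ for some $w_{1}\in W$ gives $w_{1}-x_{0}\in N_{x_{0}}$. For $y\in N_{x_{0}}$ we have $x_{0}\in M_{x_{0}+y}$, so by the previous paragraph $\mathcal{G}(y)\in\Pi_{x_{0}+y}\in\{\Pi^{(1)}(x_{0}),\Pi^{(2)}(x_{0})\}$; hence $\mathcal{G}(N_{x_{0}})$ lies in a union of at most two planar circles whose only common points on $\mathbb{S}^{d-1}$ are $\pm\mathcal{G}(x_{0})$, which are excluded on $N_{x_{0}}$. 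Therefore near each $y\in N_{x_{0}}$ the image of $\mathcal{G}$ is confined to a single embedded circle, so $\operatorname{rank}d\mathcal{G}_{y}\le 1$; as the sets $N_{x_{0}}$ cover $U$, nowhere-flatness upgrades this to $\operatorname{rank}d\mathcal{G}_{y}=1$ for every $y\in U$. But then the rank-one clause of the elementary fact gives a \emph{unique} admissible direction at each $x_{0}\in U$, hence a single plane $\Pi^{(1)}(x_{0})$, so $\mathcal{G}(N_{x_{0}})\subseteq\Pi^{(1)}(x_{0})\cap\mathbb{S}^{d-1}$, a planar circle. Since $N_{x_{0}}$ is open and non-empty in $M$, Lemma~\ref{lemma:integcurve} forces $M$ to contain a cylinder, contradicting cylinder-freeness.

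\emph{Main obstacle.} The delicate point is this two-step rank reduction. Flatness of the slices $M_{w}$ only yields $\operatorname{rank}d\mathcal{G}\le 2$ on $U$ together with up to two candidate planes at each point, which is too weak for Lemma~\ref{lemma:integcurve}; one must reinvest that coarse information — through the observation that the Gauss image of each $N_{x_{0}}$ is trapped in two planar circles — to sharpen the bound to $\operatorname{rank}d\mathcal{G}=1$ on $U$, at which stage the two candidate planes collapse to one and the cylinder materialises. Verifying the linear-algebra fact about symmetric operators and the openness/non-emptiness bookkeeping for $U$ and the $N_{x_{0}}$ is routine.
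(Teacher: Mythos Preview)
Your proof is correct, and the openness half matches the paper's argument essentially verbatim. For density the paper takes a shorter route: it fixes a single base point $x_{0}\in M_{w_{0}}$, lets $y$ vary over a small connected neighbourhood $B$ of $w_{0}-x_{0}$, and observes (as you do) that $d\mathcal{G}_{x_{0}}$ annihilates the quadratic form on each hyperplane $\operatorname{span}\{\mathcal{G}(x_{0}),\mathcal{G}(y)\}^{\perp}$; it then argues directly by a dimension count on the null set $H_{\mathbf{k}}\cap\mathbb{S}^{d-2}$ together with continuity of $y\mapsto\mathbf{r}(y)$ that the projected normal $\mathbf{r}(y)$ must be \emph{constant} on $B$, so $\mathcal{G}(B)$ already lies in one planar circle and Lemma~\ref{lemma:integcurve} finishes. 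Your argument replaces this analytic step by the algebraic observation that a symmetric operator with $A(\xi^{\perp})\subseteq\reals\xi$ admits at most two such $\xi$, hence at most two candidate planes $\Pi^{(i)}(x_{0})$, and then traps $\mathcal{G}(N_{x_{0}})$ in the union of two planar circles. That is a genuinely different mechanism and works, but note that your subsequent rank-reduction bootstrap is superfluous: once you have $\mathcal{G}(N_{x_{0}})\subset C_{1}\cup C_{2}$ with $C_{1}\cap C_{2}=\{\pm\mathcal{G}(x_{0})\}$ excluded, continuity already confines $\mathcal{G}$ to a single planar circle on a neighbourhood of any $y\in N_{x_{0}}$, and Lemma~\ref{lemma:integcurve} applies immediately. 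The paper's approach is slicker (one step, no case split on rank); yours is more algebraic and sidesteps the dimension-counting on $H_{\mathbf{k}}$, at the price of the detour.
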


\begin{proof}
Let $w\in D_M$. Then there exist $x_1, x_2, x_3\in M_w$ such that $\mathcal{G}(x_1), \mathcal{G}(x_2), \mathcal{G}(x_3)$ are not contained in a planar circle. Recall the surjective submersion $\alpha_M:\mathcal{M}\to R_M$, $(x,y)\mapsto x+y$. Let $\theta_j:U\to \mathcal{M}$ for $j=1, 2, 3$ be smooth local sections of $\alpha_M$ defined on some open $U\subset R_M$ containing $w$ such that $\theta_j(w)=(x_j,w-x_j)$. Let $p:\mathcal{M}\to M$ be the restriction to $\mathcal{M}$ of the projection onto the first coordinate. For all $z$ in a small neighbourhood of $w$ in $U$, the points $\mathcal{G}(p\circ\theta_1(z)), \mathcal{G}(p\circ\theta_2(z)), \mathcal{G}(p\circ\theta_3(z))$ will also not be contained in a planar circle. Thus $D_M$ is open.

Suppose for contradiction that $D_M$ is not dense. Let $S\subset R_M$ be non-empty and open such that for all $w\in S$, the set $\mathcal{G}(M_w)$ is contained in the planar circle $\Pi_w\cap \mathbb{S}^{d-1}$ where $\Pi_w$ is a two-dimensional plane. For any $x\in M_w$, $\overline{\mathcal{G}}(x)\ne\overline{\mathcal{G}}(w-x)$ so $\Pi_w = \text{ span}\{\mathcal{G}(x),\mathcal{G}(w-x)\}$ and the tangent space
\[T_xM_{w}=T_xM\cap T_{w-x}M=\text{span}\{\mathcal{G}(x),\mathcal{G}(w-x)\}^\perp=\Pi_{w}^\perp.\]

Fix $w_0\in S$ and $x_0\in M_{w_0}$. Set $y_0=w_0-x_0\in M$. In particular, this means that $\overline{\mathcal{G}}(y_0)=\overline{\mathcal{G}}(w_0-x_0)\ne\overline{\mathcal{G}}(x_0)$ and there exists a small open neighbourhood $B$ of $y_0$ in $M$, such that for all $y\in B$, $\overline{\mathcal{G}}(y)\ne\overline{\mathcal{G}}(x_0)$  and $w(y)=x_0+y\in S$. Since $\mathcal{G}(M_{w(y)})\subset \Pi_{w(y)}$, it follows that  $d\mathcal{G}_{x_0}(T_{x_0}M_{w(y)})\subset \Pi_{w(y)}$. Thus for any unit vector $v\in\Pi_{w(y)}^\perp=\text{span}\{\mathcal{G}(x_0),\mathcal{G}(y)\}^\perp$, 
\begin{equation}\label{fundform}
d\mathcal{G}_{x_0}(v)\cdot v = 0.\end{equation}

Let $\{q_1,\ldots, q_{d-1}\}$ be a set of principal directions at $x_0$ with corresponding principal curvatures $\kappa_1, \ldots, \kappa_{d-1}$. So $\{q_1,\ldots, q_{d-1}\}$ is a basis for $T_{x_0}M$ and $\{q_1,\ldots, q_{d-1}, \mathcal{G}(x_0)\}$ is an orthonormal basis for $\mathbb{R}^d$. Furthermore, the set $\{q_1,\ldots, q_{d-1}\}$ is a basis of eigenvectors for the linear map $d\mathcal{G}_{x_0}$ with eigenvalues $-\kappa_1, \ldots, -\kappa_{d-1}$.

Define $\overline{\bf r}(y)=(q_1\cdot \mathcal{G}(y),\ldots,q_{d-1}\cdot \mathcal{G}(y))$, ${\bf v}=(v_1,\ldots, v_{d-1})$ and ${\bf k}=(\kappa_1,\ldots,\kappa_{d-1})$ viewed as elements of $\mathbb{R}^{d-1}$. Since $\overline{\mathcal{G}}(y)\ne\overline{\mathcal{G}}(x_0)$, it follows that $\overline{\bf r}(y)\ne 0$ and we can define ${\bf r}(y)=\overline{\bf r}(y)/\|\overline{\bf r}(y)\|$. Also define \[H_{\bf k}=\{{\bf u}\in\mathbb{R}^{d-1}\,|\,\sum_{j=1}^{d-1}\kappa_ju_j^2=0\}.\] Then, equation (\ref{fundform}) is equivalent to the assertion that whenever ${\bf v}\in\mathbb{S}^{d-2}$ and ${\bf v}\cdot {\bf r}(y)=0$ it follows that ${\bf v}\in H_{\bf k}$. Since $M$ is nowhere-flat, ${\bf k}\ne 0$. Suppose without loss of generality that $\kappa_{d-1}\ne 0$. Let $\Phi=\{{\bf u}\in\mathbb{R}^{d-1}\,|\,u_{d-1}\ne 0\}$. Then the set $H_{\bf k}\cap\mathbb{S}^{d-2}\cap\Phi$ is a $(d-3)$-dimensional submanifold of $\mathbb{R}^{d-1}$. Observe that ${\bf r}(y)$ varies smoothly with $y\in B$ on $\mathbb{S}^{d-2}$. If it is not constant, then \[\bigcup_{y\in B}\text{span}\{{\bf r}(y)\}^\perp\cap\mathbb{S}^{d-2}\] contains an open subset of $\mathbb{S}^{d-2}$. For every $y\in B$, the $(d-3)$-dimensional submanifold $\text{span}\{{\bf r}(y)\}^\perp\cap\mathbb{S}^{d-2}$ is a subset of $H_{\bf k}\cap\mathbb{S}^{d-2}$, thus $H_{\bf k}\cap\mathbb{S}^{d-2}$ contains an open subset of $\mathbb{S}^{d-2}$. Since $\Phi\cap\mathbb{S}^{d-2}$ is a dense open subset of $\mathbb{S}^{d-2}$, this implies that an open subset of a $(d-2)$-dimensional manifold is contained in a $(d-3)$-dimensional manifold; this is a contradiction. Therefore ${\bf r}(y)=(r_1,\ldots,r_{d-1})$ is constant.

Thus for all $y\in B$, \[\mathcal{G}(y)\in\text{span}\{\mathcal{G}(x_0),\sum_{j=1}^{d-1}r_jq_j\}.\] By Lemma~\ref{lemma:integcurve}, this contradicts $M$ being cylinder-free.
\end{proof}

We now prove Proposition~\ref{prop:gammasurjective}. Let $w\in R_M$. By Lemma~\ref{lemma:geomcond}, it suffices to show that there exists $(x,y)\in\mathcal{P}_M$ satisfying (\ref{smooth}), (\ref{regular1}) and (\ref{regular2}).

The set $M+M_w$ contains a subset which is open in $\reals^d$. Indeed, consider the smooth map $\tau:M\times M_w\to\mathbb{R}^d$, $(x,s)\mapsto x+s$. Then $\tau$ is a submersion at $(x,s)$ if and only if $\mathcal{G}(x)$ does not lie in the span of $\mathcal{G}(s)$ and $\mathcal{G}(w-s)$. Since $M$ is cylinder-free the set $\mathcal{G}(M)$ does not lie in any planar circle, thus given any $s\in M_w$ there exists $x\in M$ such that $(x,s)$ is a regular point of $\tau$. Therefore the image of $\tau$ must contain an open set.

By Lemma~\ref{lemma:Ddense}, there thus exist $x_1,y_1,y_2\in M$ such that $x_1+y_2\in D_M$, $x_1+y_1=w$ and $\overline{\mathcal{G}}(x_1)\ne\overline{\mathcal{G}}(y_1)$.Since $M$ is nowhere-flat, $y_2$ may be perturbed in a small $M$-open set if necessary so that $\overline{\mathcal{G}}(y_2)\ne\overline{\mathcal{G}}(x_1)$.

Since $M$ is cylinder-free, there exists $x_2\in M$ such that 
\begin{equation}\label{equ1}\mathcal{G}(x_2)\notin\text{ span}\{\mathcal{G}(x_1), \mathcal{G}(y_1)\}.
\end{equation} By perturbing $x_2$ if necessary we may assume in addition that 
\begin{equation}\label{equ2}\overline{\mathcal{G}}(x_2)\ne\overline{\mathcal{G}}(y_2).
\end{equation}

The points $y_3$ and $x_4$ will be chosen to be close to $y_1$ and $x_2$ respectively. To this end, set $y_3=y_1$ and $x_4=x_1$ for the moment. Since $x_1+y_2\in D_M$, there exists $y_4$ such that $y_4\in M\cap(x_1+y_2-M)$ and
 \begin{equation}\label{equ3}\mathcal{G}(y_4)\notin\text{ span}\{\mathcal{G}(x_2), \mathcal{G}(y_2)\}.
\end{equation} Let $x_3=x_1+y_2-y_4\in M$.

Define $\Delta = \Delta(x_1,x_2,y_1,y_2) = \text{span}\{\mathcal{G}(x_1),\mathcal{G}(y_1)\}\cap\text{ span}\{\mathcal{G
}(x_2),\mathcal{G}(y_2)\}$. The choice of $x_1, y_1, x_2, y_2$ implies that $\mathcal{G}(x_2)\notin \Delta$ and $\dim\Delta\le 1$. Then, by (\ref{equ1}), (\ref{equ2}), (\ref{equ3}), it follows that 
\begin{equation}\label{equ4}\mathcal{G}(x_4)\notin\text{ span}\{\mathcal{G}(y_4)\}+\Delta
\end{equation} 
so $\overline{\mathcal{G}}(x_4)\ne\overline{\mathcal{G}}(y_4)$ and $\Delta\cap\text{span}\{\mathcal{G}(x_4),\mathcal{G}(y_4)\}=\{0\}$. Therefore (\ref{smooth}) and (\ref{regular2}) are satisfied. Moreover, (\ref{regular1}) is also satisfied except perhaps for the statement $\overline{\mathcal{G}}(x_3)\ne\overline{\mathcal{G}}(y_3)$.

Suppose that $\overline{\mathcal{G}}(x_3)=\overline{\mathcal{G}}(y_3)$ for this choice of $(x,y)$. Keep $x_1$, $x_3$, $y_1$, $y_2$, $y_3$ and $y_4$ fixed. Then $x_2$ may be perturbed freely in an open subset $B$ of $M$, with $x_4$ set to $x_2$ for each choice, while preserving the conditions (\ref{equ1}), (\ref{equ2}) and (\ref{equ3}). Furthermore, for each choice of $x_2\in B$, we may perturb $y_3$ in a small open subset of $M_{y_1+x_2}$ and redefine $x_4=y_1+x_2-y_3$ while still preserving condition (\ref{equ4}). The next lemma completes the proof of Proposition~\ref{prop:gammasurjective}.

\begin{lemma}
Let $B\subset M$ be non-empty and open. Suppose that $y\in M$ satisfies $\overline{\mathcal{G}}(y)\notin\overline{\mathcal{G}}(B)$. Then, for any open neighbourhood $U\subset M$ of $y$, there exists $x\in B$ and $y'\in M_{x+y}\cap U$ such that $\overline{\mathcal{G}}(y')\ne\overline{\mathcal{G}}(y)$.
\end{lemma}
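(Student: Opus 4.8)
The plan is to exploit that $y$ itself always lies in $M_{x+y}$: since $x=(x+y)-y\in M$ and $\overline{\mathcal{G}}(y)\notin\overline{\mathcal{G}}(B)$ gives $\overline{\mathcal{G}}(y)\ne\overline{\mathcal{G}}(x)$, the surfaces $M$ and $(x+y)-M$ meet transversally at $y$, so near $y$ the set $M_{x+y}$ is a genuine $(d-2)$-dimensional submanifold with tangent space $T_yM_{x+y}=\text{span}\{\mathcal{G}(x),\mathcal{G}(y)\}^\perp$. Writing $P$ for orthogonal projection onto $T_yM$, this tangent space is exactly the hyperplane of $T_yM$ orthogonal to the nonzero vector $P\mathcal{G}(x)$. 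The statement then reduces to finding one $x\in B$ for which $d\mathcal{G}_y$ does not vanish identically on $T_yM_{x+y}$; for such an $x$ the restriction $\mathcal{G}|_{M_{x+y}}$ has nonzero differential at $y$, hence is non-constant in every neighbourhood of $y$, and any $y'\in M_{x+y}$ close enough to $y$ with $\mathcal{G}(y')\ne\mathcal{G}(y)$ then lies in $U$ and satisfies $\overline{\mathcal{G}}(y')\ne\overline{\mathcal{G}}(y)$ (taking $y'$ close enough that $\mathcal{G}(y')$ cannot be antipodal to $\mathcal{G}(y)$).

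To produce such an $x$, I would distinguish cases according to $\operatorname{rank}\,d\mathcal{G}_y$, which equals the number of nonzero principal curvatures at $y$ and is at least $1$ since $M$ is nowhere-flat. If this rank is $\ge 2$, then $\dim\ker d\mathcal{G}_y\le d-3<d-2=\dim T_yM_{x+y}$, so $d\mathcal{G}_y$ fails to vanish on $T_yM_{x+y}$ for \emph{every} $x\in B$ and we are done. If the rank equals $1$, let $p_y$ be the principal direction of the unique nonzero principal curvature and put $\Pi=\text{span}\{\mathcal{G}(y),p_y\}$; a short linear-algebra check (using that $T_yM_{x+y}=(P\mathcal{G}(x))^{\perp}\cap T_yM$ has dimension $d-2$, that $\ker d\mathcal{G}_y$ has dimension $d-2$, and that $\ker d\mathcal{G}_y^{\,\perp}\cap T_yM=\text{span}\{p_y\}$) shows that $d\mathcal{G}_y$ vanishes on $T_yM_{x+y}$ precisely when $\mathcal{G}(x)\in\Pi$. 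Since $M$ is cylinder-free and $B$ is open, Lemma~\ref{lemma:integcurve} ensures $\mathcal{G}(B)$ is not contained in the planar circle $\Pi\cap\mathbb{S}^{d-1}$, so some $x\in B$ has $\mathcal{G}(x)\notin\Pi$, as required.

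With such an $x$ fixed, I would finish by the step already sketched: since $d(\mathcal{G}|_{M_{x+y}})_y=d\mathcal{G}_y|_{T_yM_{x+y}}\ne 0$, every neighbourhood of $y$ in $M_{x+y}$ contains points $y'$ with $\mathcal{G}(y')\ne\mathcal{G}(y)$, and choosing $y'$ sufficiently close to $y$ makes $y'\in U$ and $\overline{\mathcal{G}}(y')\ne\overline{\mathcal{G}}(y)$. The main obstacle is the rank-$1$ case: one has to pin down the obstruction exactly, namely that the offending plane $\Pi$ depends only on $y$ (and $M$) and not on $x$, so that the failure to move $\mathcal{G}(x)$ off $\Pi$ forces $\mathcal{G}(B)$ into a planar circle — which is precisely the configuration excluded by cylinder-freeness via Lemma~\ref{lemma:integcurve}.
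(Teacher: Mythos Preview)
Your proof is correct and essentially the same as the paper's. The paper argues by contradiction---assuming no such $x$ exists, so $T_yM_{x+y}\subset\ker d\mathcal{G}_y$ for every $x\in B$, then picking a single principal direction $p$ of nonzero curvature to conclude $\mathcal{G}(x)\in\text{span}\{\mathcal{G}(y),p\}$ for all $x\in B$---while you argue directly and split on the rank of $d\mathcal{G}_y$; but the core mechanism (the obstruction is $\mathcal{G}(x)\in\Pi=\text{span}\{\mathcal{G}(y),p_y\}$, which is ruled out for some $x$ by Lemma~\ref{lemma:integcurve}) is identical.
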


\begin{proof}
Suppose not. Then for each $x\in B$, the Gauss normal map $\mathcal{G}$ is constant along $M_{x+y}$ near $y$. Thus, the $(d-2)$-dimensional tangent plane $T_yM_{x+y}$ must lie entirely in the span of the principal directions at $y\in M$ corresponding to zero principal curvatures. Since $M$ is nowhere-flat, there exists a principal direction $p$ at $y$ with non-zero principal curvature. Since $T_yM_{x+y}=T_yM\cap T_y(x+y-M)=T_yM\cap T_xM$, this means that $p$ lies in the span of $\mathcal{G}(y)$ and $\mathcal{G}(x)$. Since this is true for all $x\in B$ and $\mathcal{G}(y)\cdot p=0$ it follows that $\mathcal{G}(B)$ lies entirely in the span of $p$ and $\mathcal{G}(y)$. By Lemma~\ref{lemma:integcurve}, this contradicts $M$ being cylinder-free.
\end{proof}

\section{Nearly-Multiplicative Functions on Hypersurfaces}\label{section:app}

In this section we establish Theorem~\ref{theorem:appmain}, which is an approximate version of the main result Theorem~\ref{theorem:mainmult}. By replacing multiplication with addition where appropriate, the same argument also establishes the approximate version of Theorem~\ref{theorem:mainadd}.

If $F:(0,\delta_0)\to\complex$ is a function with domain the open interval $(0,\delta_0)$ for some $\delta_0>0$ and $\tau_1,\ldots,\tau_k$ is a list of parameters, we will use the Landau notation \[F(\delta)=o_{\tau_1,\ldots,\tau_k}(1)\] to mean that $F(\delta)\to 0$ as $\delta\to 0$ in a way which only depends on $\tau_1,\ldots,\tau_k,M$---that is, there exists some $0<\delta_1<\delta_0$ and a function $\nu:(0,\delta_1)\to(0,\infty)$ depending only on $\tau_1,\ldots,\tau_k$ and $M$ satisfying $\nu(\delta)\to 0$ as $\delta\to 0$ such that for all $0<\delta<\delta_1$ \[|F(\delta)|\le\nu(\delta).\] We will write $F(\delta)=G(\delta)+o_{\tau_1,\ldots,\tau_k}(1)$ to mean $F(\delta)-G(\delta)=o_{\tau_1,\ldots,\tau_k}(1)$.

Let $M$ be as in the statement of Theorem~\ref{theorem:appmain}. Suppose that $f,g:M\to\complex$ are measurable functions vanishing only on a $\mu$-null set and $h:2M\to\complex$ is measurable. For each $\delta>0$ define \[E_\delta=\{(x,y)\in M^2\,\,|\,\,|f(x)g(y)h(x+y)^{-1}-1|>\delta\}.\] By closely following the discussion in \S\ref{section:strategy} which relies on Proposition~\ref{prop:gammasurjective}, replacing null sets with small sets where appropriate, we obtain the following proposition; the precise details are omitted.

\begin{proposition}
For all $z\in R_M$ there exist a non-empty open ball $T_z\subset\reals^d$ with center $z$ and a translate $B_z$ of $T_z$ with the following property. If $\mu^2(E_\delta)<\delta$ then there exist a measurable function $H_z^\delta:T_z + B_z\to\complex$ and a set $\mathcal{B}_z^\delta\subset T_z\times B_z$ such that 
\[
h(w_1)h(w_2)H_z^\delta(w_1+w_2)^{-1} = 1+o(1)
\]
for all $(w_1,w_2)\in\mathcal{B}_z^\delta$ and \[\lambda^{2d}(\mathcal{B}_z^\delta)=o_z(1).\]
\end{proposition}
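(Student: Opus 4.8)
The plan is to run the argument of \S\ref{section:strategy} while tracking how error sets propagate through each submersion, using the quantitative Lemma~\ref{lemma:classical} in place of its exact counterpart. Fix $z\in R_M$ and recall from the proof of Proposition~\ref{prop:gammasurjective} that there exists $(x,y)\in\mathcal{R}_M$ with $z=\gamma_M(x,y)$; write $\zeta=(\zeta_1,\zeta_2,\zeta_3,\zeta_4)=\pi_M(x,y)\in\Lambda$ with $\zeta_1=z$. Since $\pi_M$ is a submersion at $(x,y)$, one may select an open product neighbourhood $V=(T_1\times T_2\times T_3\times T_4)\cap\Lambda$ of $\zeta$ inside $R_M^4\cap\Lambda$, exactly as in \S\ref{section:strategy}, with $T_1$ a ball centred at $z$, $T_2$ a translate of $T_1$, and the projection $(w_1,w_2,w_3,w_4)\mapsto(w_1,w_2)$ mapping $V$ onto $T_1\times T_2$. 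Set $T_z=T_1$ and $B_z=T_2$; these are fixed once and for all, independent of $\delta$, $f$, $g$, $h$.

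Next I would trace the error. The set $\mathcal{R}_M\cap\mathcal{S}_M(E_\delta)$ is, by Lemma~\ref{lemma:abscont} applied to each of the finitely many submersions $(x,y)\mapsto(x_i,y_j)$, small in $\sigma$-measure: more precisely $\sigma(\mathcal{R}_M\cap\mathcal{S}_M(E_\delta))=o(1)$ as $\mu^2(E_\delta)\to 0$, with the rate depending on $M$ only (one uses that these submersions have, on a fixed relatively compact piece of $\mathcal{R}_M$ near $(x,y)$, uniformly bounded Jacobian factors). By Lemma~\ref{lemma:regroup}, outside the image under $\pi_M$ of this bad set one has $h(w_1)h(w_2)=h(w_3)h(w_4)$ up to a factor $1+o(1)$; since $\pi_M$ is a submersion near $(x,y)$ its pushforward of $\sigma$ dominates $\lambda^{3d}$ on a neighbourhood of $\zeta$ (Lemma~\ref{lemma:abscont} again), so for $w$ outside a subset of $V$ of $\lambda^{3d}$-measure $o_z(1)$ one has $h(w_1)h(w_2)(h(w_3)h(w_4))^{-1}=1+o(1)$. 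Running this through the auxiliary manifold $\mathcal{T}$ of \S\ref{section:strategy} and its two submersions onto $V$ (whose Jacobian bounds depend on $V$, hence on $z$), one concludes that $h(w_1)h(w_2)(h(w_1')h(w_2'))^{-1}=1+o(1)$ for $(w_1,w_2,w_1',w_2')$ outside a subset of $(T_z\times B_z\times T_z\times B_z)\cap\Lambda$ of measure $o_z(1)$. Finally, define $H_z^\delta(u)$ to be the average of $h(w_1)h(w_2)$ over the fibre $\{w_1+w_2=u,\ w_1\in T_z,\ w_2\in B_z\}$; then for $(w_1,w_2)$ outside a set $\mathcal{B}_z^\delta\subset T_z\times B_z$ of $\lambda^{2d}$-measure $o_z(1)$ one has $h(w_1)h(w_2)H_z^\delta(w_1+w_2)^{-1}=1+o(1)$, which is the assertion. (Here one must also invoke $\mu*\mu$-absolute continuity, eqn.~(\ref{convequalsleb}), to know $h$ is finite and nonzero $\lambda^d$-a.e.\ on $R_M$, so that the average is well-defined.)

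The main obstacle is the bookkeeping of the $o(1)$ rates: one must verify that at each of the finitely many applications of Lemma~\ref{lemma:abscont} the relevant submersion is, after restricting to a fixed relatively compact coordinate patch chosen near $(x,y)$ (and near $\zeta$), bi-Lipschitz-controlled with constants depending only on $M$ and on the fixed neighbourhood $V=V_z$, so that ``full measure'' statements degrade to ``measure $o_z(1)$'' statements with a uniform rate; and that the multiplicative errors $1+o(1)$ compose without blow-up, which uses that $h$ is bounded away from $0$ and $\infty$ only \emph{locally}, hence that the exceptional sets where $h$ or $h^{-1}$ is large must themselves be absorbed into $\mathcal{B}_z^\delta$. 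None of this is deep, but it is exactly the ``precise details'' the statement invites us to omit; the structure is a faithful quantitative transcription of the exact argument in \S\ref{section:strategy}, with Lemma~\ref{lemma:classical} supplying the endgame in the deduction that $h$ is nearly exponential-affine on $T_z$ (a step used in the next stage of the proof of Theorem~\ref{theorem:appmain}, not needed for this proposition itself).
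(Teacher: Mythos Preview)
Your proposal is correct and follows precisely the approach the paper itself indicates (and then explicitly omits): a quantitative transcription of the exact argument in \S\ref{section:strategy}, tracking small sets through the chain of submersions via Lemma~\ref{lemma:abscont} and then averaging over fibres to produce $H_z^\delta$. One notational slip worth fixing: the ``bad set'' you need in $\mathcal{R}_M$ is $\mathcal{R}_M\setminus\mathcal{S}_M(M^2\setminus E_\delta)$ (the set where \emph{some} pair $(x_i,y_j)$ lies in $E_\delta$), not $\mathcal{R}_M\cap\mathcal{S}_M(E_\delta)$ (where \emph{all} pairs do)---but your parenthetical about applying Lemma~\ref{lemma:abscont} to each coordinate submersion makes clear you have the right argument in mind.
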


By Lemma~\ref{lemma:classical}, the proposition implies that for each $z\in R_M$ if $\mu^2(E_\delta)<\delta$ then there exist $c_z^\delta\in\complex\backslash\{0\}$ and $v_z^\delta\in\complex^d$ satisfying \[h(w)(c_z^\delta\exp(v_z^\delta\cdot w))^{-1}=1+o_z(1)\] for all $w\in T_z$ except for a set of $\lambda^d$-measure $o_z(1)$.  Define \[\mathcal{N}_z=\{x'\in M\,\,|\,\,\text{there exists } y\in M \text{ such that }\overline{\mathcal{G}}(x')\ne\overline{\mathcal{G}}(y)\text{ and }x'+y\in T_z\}.\] Applying (\ref{convequalsleb}) we deduce that whenever $\mu^2(E_\delta)<\delta$ there exist $b_z^\delta\in\complex\backslash\{0\}$ and $v_z^\delta\in\complex^d$ such that 
\begin{equation}\label{fisaffine}
f(x)(b_z^\delta\exp(v_z^\delta\cdot x))^{-1}=1+o_{z}(1)
\end{equation} 
for all $x\in \mathcal{N}_z$ except for a set of $\mu$-measure $o_z(1)$. 

The next lemma may be interpreted as an approximate analogue of Lemma~\ref{lemma:localise}.

\begin{lemma}\label{lemma:appaffine}
Let $M$ be a nowhere-flat bounded hypersurface with finite induced measure $\mu$. Let $c\in\complex\backslash\{0\}$ and $v\in\complex^d$. Given $\epsilon>0$ there exists $\delta=\delta(\epsilon,M)>0$ such that if \[\mu(\{|c\exp(v\cdot x)-1|>\delta\})<\delta\] then $|c-1|$, $\norm{v}$ are less than $\epsilon$.  
\end{lemma}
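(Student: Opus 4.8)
The plan is to argue by contradiction in a compactness/normal-families style. Fix $\epsilon>0$ and suppose no such $\delta$ works. Then for each $n\in\naturals$ there exist $c_n\in\complex\backslash\{0\}$ and $v_n\in\complex^d$ with $\mu(\{x\in M\,\,|\,\,|c_n\exp(v_n\cdot x)-1|>1/n\})<1/n$, but with $|c_n-1|\ge\epsilon$ or $\norm{v_n}\ge\epsilon$. Set $\phi_n(x)=c_n\exp(v_n\cdot x)$; the hypothesis says $\phi_n\to 1$ in $\mu$-measure on $M$, and since $\mu(M)<\infty$ this forces, after passing to a subsequence, $\phi_n\to 1$ $\mu$-a.e. on $M$. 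Because $M$ is nowhere-flat, $M$ contains an affinely-spanning open piece: choosing $d+1$ points $x_0,\dots,x_d$ in a small nowhere-flat neighbourhood whose differences span $\reals^d$, one can find a set of positive $\mu$-measure near each $x_j$ on which $\phi_n\to 1$; picking convergence points $x_j^{(n)}\to x_j$ gives $|\phi_n(x_j^{(n)})|\to 1$ for each $j$. The key point is that the $d+1$ equations $|c_n|e^{\Re v_n\cdot x_j^{(n)}}\to 1$, i.e. $\log|c_n|+\Re v_n\cdot x_j^{(n)}\to 0$, with the matrix of differences $x_j^{(n)}-x_0^{(n)}$ uniformly invertible for large $n$, force $\Re v_n\to 0$ and $\log|c_n|\to 0$.

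Next I would extract the imaginary parts. Having shown $\Re v_n\to 0$ and $|c_n|\to 1$, write $c_n=|c_n|e^{i\theta_n}$ and note that $\phi_n(x)\to 1$ a.e. now reads $e^{i(\theta_n+\Im v_n\cdot x)}\to 1$ a.e., i.e. $\theta_n+\Im v_n\cdot x\to 0$ modulo $2\pi$ a.e. If $\norm{\Im v_n}$ stays bounded along a subsequence, the same $(d+1)$-point argument (now with the residual $2\pi\integers$ ambiguity killed by boundedness: $\theta_n+\Im v_n\cdot x_j^{(n)}$ lies in a bounded set and converges to a point of $2\pi\integers$, hence is eventually constant along the subsequence, and then the invertible difference matrix forces $\Im v_n\to 0$ and $\theta_n\to 0\ (\mathrm{mod}\ 2\pi)$) gives $v_n\to 0$ and $c_n\to 1$, contradicting $|c_n-1|\ge\epsilon$ or $\norm{v_n}\ge\epsilon$. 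So it remains to rule out $\norm{\Im v_n}\to\infty$: in that case, after rotating, write $\Im v_n=t_n\omega_n$ with $t_n\to\infty$, $\omega_n\to\omega\in\mathbb{S}^{d-1}$; the condition $\operatorname{dist}(\theta_n+t_n\,\omega_n\cdot x,\,2\pi\integers)\to 0$ for $\mu$-a.e.\ $x$ in an open piece $U$ of $M$ on which the function $x\mapsto\omega\cdot x$ is non-constant (such $U$ exists since no open subset of a nowhere-flat $M$ lies in an affine hyperplane) is impossible: as $x$ ranges over $U$, $\omega_n\cdot x$ ranges over an interval of fixed length $\ell>0$ for large $n$, so $\theta_n+t_n\omega_n\cdot x$ ranges over an interval of length $t_n\ell\to\infty$, and the set where such a linear function of $x$ is within, say, $\pi/2$ of $2\pi\integers$ has $\mu$-measure bounded away from full — a quantitative version of this (the image measure of $\mu|_U$ under $x\mapsto\omega_n\cdot x$ is absolutely continuous with respect to $\lambda$ on a fixed interval, by nowhere-flatness) contradicts a.e.\ convergence.

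I expect the main obstacle to be the last step — ruling out $\norm{\Im v_n}\to\infty$ — because it genuinely uses the geometry of $M$ rather than just linear algebra; one must quantify that $M$ does not locally lie in an affine hyperplane in a way robust enough to show $\operatorname{dist}(\theta_n+t_n\omega_n\cdot x,2\pi\integers)$ cannot tend to $0$ a.e.\ when $t_n\to\infty$. Concretely, I would fix an open $U\subset M$ and a tangent direction $e$ along which $x\mapsto e\cdot x$ has non-vanishing derivative on $U$, parametrise a short arc, and push $\mu|_U$ forward to a measure on $\reals$ that is mutually absolutely continuous with Lebesgue measure on an interval $I$ (arguing as in Lemma~\ref{lemma:abscont}); then the pullback of the "bad set" $\{s\in I\,\,|\,\,\operatorname{dist}(\theta_n+t_ns,2\pi\integers)<\pi/2\}$ has Lebesgue measure at least $|I|/2$ uniformly in $n$, hence positive $\mu$-measure uniformly, contradicting $\phi_n\to 1$ a.e. The remaining steps (measure convergence $\Rightarrow$ a.e. convergence along a subsequence, and the invertible-difference-matrix linear algebra) are routine. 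Finally, the boundedness reduction in the second paragraph lets one treat the modular ambiguity cleanly, so the whole argument splits into: (i) a.e. reduction, (ii) control of $\Re v_n$ and $|c_n|$ by finitely many points, (iii) the dichotomy bounded/unbounded for $\Im v_n$, with the unbounded case being the crux.
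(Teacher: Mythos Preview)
Your proposal is correct and takes a genuinely different route from the paper. The paper does not argue by contradiction or compactness at all; instead it uses the addition map. If $x,y\in L_\delta=\{x:|c\exp(v\cdot x)-1|\le\delta\}$ then $|c^2\exp(v\cdot(x+y))-1|\le 3\delta$, and since $M$ is nowhere-flat the surjective submersion $\alpha_M:\mathcal{M}\to R_M$ together with Lemma~\ref{lemma:abscont} converts ``$L_\delta^2$ is $\mu^2$-nearly full'' into ``$|c^2\exp(v\cdot w)-1|\le 3\delta$ for $\lambda^d$-most $w$ in the fixed open set $R_M\subset\reals^d$''. The statement on an open ball in $\reals^d$ is then declared elementary. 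Your approach works directly on $M$ and is self-contained --- no sumset transfer --- at the price of the bounded/unbounded dichotomy on $\Im v_n$; the paper's route is shorter and recycles machinery already set up in \S\ref{section:strategy}, while yours would survive in settings where $M+M$ does not contain an open set.

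One small slip: in the bounded-$\Im v_n$ case your $(d+1)$-point linear algebra, as written, only yields $\Im v_n\to w$ with $w\cdot(x_j-x_0)=2\pi(k_j-k_0)$, not $w=0$. You need either to choose the $x_j$ close enough together that $|\Im v_n\cdot(x_j-x_0)|<2\pi$ for all $n$ (forcing $k_j=k_0$), or simply to pass to the a.e.\ limit $e^{i(\theta+w\cdot x)}=1$ on $M$ and invoke nowhere-flatness directly. Either fix is immediate.
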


\begin{proof}
Let $L_\delta=\{x\in M\,\,|\,\,|c\exp(v\cdot x) - 1|\le \delta\}$ and suppose that $\mu(M\backslash L_\delta)<\delta<1$. For all $(x,y)\in L_\delta^2$ it follows that $|c^2\exp(v\cdot(x+y))-1|\le 3\delta$. Since $M$ is nowhere-flat, $\mu^2(\{(x,y)\in M^2\,\,|\,\, \overline{\mathcal{G}}(x)=\overline{\mathcal{G}}(y)\})=0$ so 
\[\mu^2(\{(x,y)\in M^2\,\,|\,\,\overline{\mathcal{G}}(x)\ne\overline{\mathcal{G}}(y)\text{ and } x+y\in 2L_\delta\})\ge\mu^2(L_\delta^2).\]

By Lemma~\ref{lemma:abscont}, there exists a function $l:(0,1)\to(0,\infty)$ independent of $c$, $v$ and $\delta$ such that $l(\delta)\to 0$ as $\delta\to 0$ and $\lambda^d(R_M\backslash R_M\cap 2L_\delta)<l(\delta)$. Thus for all $w\in R_M$ outside a set of $\lambda^d$-measure $l(\delta)$, \[|c^2\exp(v\cdot w)-1|\le 3\delta.\]
Since $R_M$ is open and non-empty, it follows that there exists a function $\epsilon:(0,1)\to(0,\infty)$ independent of $c$, $v$ and $\delta$ such that $\epsilon(\delta)\to 0$ as $\delta\to 0$ and $|c-1|, \norm{v}\le \epsilon(\delta)$.
\end{proof}

Let $\epsilon>0$ be fixed. Let $\tau>0$ be a small positive parameter to be chosen below. Then there exists a set $\{z_1,\ldots,z_{n}\}$ of elements of $M$ depending on $\tau$ such that $\mathcal{N}=\cup_{j=1}^{n}\mathcal{N}_{z_j}$ is connected and  \[\mu(M\backslash\mathcal{N})<\tau.\] By (\ref{fisaffine}) and Lemma~\ref{lemma:appaffine}, for small $\delta>0$ if $\mu^2(E_\delta)<\delta$ then for each $1\le i,j\le n$, either the sets $\mathcal{N}_{z_i}$ and $\mathcal{N}_{z_j}$ are disjoint or 
\[|b_{z_i}^\delta (b_{z_j}^\delta)^{-1}-1|,\norm{v_{z_i}^\delta-v_{z_j}^\delta}=o_{\tau}(1).\] Set $b_\delta=b_{z_1}^\delta$ and $v_\delta=v_{z_1}^\delta$. Since $\mathcal{N}$ is connected, it follows that for each $1\le j\le n$,
\begin{equation}\label{affinity}
|b_\delta (b_{z_j}^\delta)^{-1}-1|,\norm{v_\delta-v_{z_j}^\delta}= o_\tau(1).
\end{equation}
Observe that (\ref{affinity}) implies that for all $x\in M$ \[(b_\delta\exp(v_\delta\cdot x))(b_{z_j}^\delta\exp(v_{z_j}^\delta\cdot x))^{-1}=1+o_\tau(1).\] Therefore (\ref{fisaffine}) implies that if $\mu(E_\delta)<\delta$ then there exist $b_\delta\in\complex\backslash \{0\}$, $v_\delta\in\complex^d$ and a set $\mathcal{N}^\delta\subset \mathcal{N}$ depending on $\tau$ such that $\mu(\mathcal{N}\backslash \mathcal{N}^\delta)=o_\tau(1)$ and \[f(x)(b_\delta\exp(v_\delta\cdot x))^{-1}=1+o_\tau(1)\] for all $x\in \mathcal{N}^\delta$. Choosing $\tau<\epsilon/2$, say, and taking $\delta$ sufficiently small completes the proof of Theorem~\ref{theorem:appmain}.
\qed

\section{Multiplicative Functions on Curves}\label{section:curves}

Let $I\subset\reals$ be an open interval and suppose that $\gamma:I\to\reals^d$ is a smooth isometric---that is, unit speed---embedding. Suppose that no open subset of $\Gamma=\gamma(I)$ lies in an affine hyperplane. This is equivalent to demanding that whenever $U\subset I$ is open, there exist $u_1,\ldots,u_d\in U$ such that the set of vectors $\{\dot{\gamma}(u_1),\ldots,\dot{\gamma}(u_d)\}$ is a basis for $\reals^d$.  Furthermore, arguing similarly to Lemma~\ref{lemma:localise}, this means that we may assume without loss of generality that $I$ is bounded.

In this section we prove Theorem~\ref{theorem:curves}; the additive analogue follows similarly. For each $0\le j\le d$, let $f_j:I\to \complex$ be a measurable function which only vanishes on a $\lambda$-null set. Assume, in addition, that each $f_j$ is locally $\lambda$-bounded in the sense that $\text{ess sup}_{x\in K} |f_j(x)|$ is finite for each compact $K\subset I$. Suppose that there exists a measurable function $F:(d+1)\Gamma\to\complex$ such that
\begin{equation}\label{dfoldCauchy}
\prod_{j=0}^df_j(x_j)=F\big(\sum_{j=0}^d\gamma(x_j)\big)\text{ for }\lambda^{d+1}\text{-a.e. }(x_0,\ldots,x_d)\in I^{d+1}.
\end{equation}

Define \[\mathcal{A}=\{(x_0,\ldots,x_d)\in I^{d+1}\,\,|\,\,\text{span}\{\dot{\gamma}(x_0)),\ldots,\dot{\gamma}(x_d)\}=\reals^d\}.\]
The smooth map $I^{d+1}\to\reals^d$ given by $(x_0,\ldots,x_d)\mapsto\sum_{j=0}^d\gamma(x_j)$ restricts to a submersion $\alpha:\mathcal{A}\to\reals^d$. By Lemma~\ref{lemma:abscont} and (\ref{dfoldCauchy}), the assumed local boundedness of $f_j$ for $0\le j\le d$ implies that $F$ is locally $\lambda^d$-integrable on the open set $\alpha(\mathcal{A})\subset\reals^d$. Choose $u_1,\ldots,u_d\in I$ such that $\{\dot{\gamma}(u_1),\ldots,\dot{\gamma}(u_d)\}$ is a basis for $\reals^d$. By the inverse function theorem, the smooth map $\beta:I^d\to\reals^d$ given by $(x_1,\ldots,x_d)\mapsto\sum_{j=1}^d\gamma(x_j)$ is a diffeomorphism from a bounded open neighbourhood $U\subset  I^d$ of $(u_1,\ldots,u_d)$ onto a non-empty open ball $V=B_r(z)\subset\reals^d$ and $I\times U\subset\mathcal{A}$. By shrinking $U$ and $V$ if necessary, we may assume that, in addition, \[\chi=\int_U\big(\prod_{j=1}^df_j(x_j)\big)dx_1\ldots dx_d\ne 0.\] Therefore $f_0$ agrees $\lambda$-almost everywhere with the continuous function \begin{equation}\label{eqncont}
g_0:x_0\mapsto\frac{\int_UF\big(\gamma(x_0)+\sum_{j=1}^d\gamma(x_j)\big)dx_1\ldots dx_d}{\int_U\big(\prod_{j=1}^df_j(x_j)\big)dx_1\ldots dx_d}
\end{equation} and similarly, for each $1\le j\le d$, the function $f_j$ agrees $\lambda$-almost everywhere with a continuous function $g_j$. By Lemma~\ref{lemma:abscont} and (\ref{dfoldCauchy}), $F|_\mathcal{\alpha(A)}$ agrees almost everywhere with a continuous function $G$.

Thus,
\[g_0(x)=\chi^{-1}\int_{V}G(\gamma(x)+u)J_\beta(u)du=\chi^{-1}\int_{V+\gamma(x)}G(u)J_\beta(u-\gamma(x))du\]
where $J_\beta$ denotes the Jacobian of $\beta$. By the continuity of $G$ and the smoothness of $\gamma$ and $J_\beta$, it follows by the Leibniz integral theorem that $g_0$ is differentiable. Similarly, for each $1\le j\le d$, the function $g_j$ is differentiable. Therefore $G$ is differentiable and 
\begin{equation}
\prod_{j=0}^dg_j(x_j)=G\big(\sum_{j=0}^d\gamma(x_j)\big)\text{ for all }(x_0,\ldots,x_d)\in \mathcal{A}.
\end{equation}
Differentiating with respect to $x_0$,
\begin{equation}\label{difffunc}
g_0'(x_0)\prod_{j=1}^dg_j(x_j)=\dot{\gamma}(x_0)\cdot\nabla G\big(\sum_{j=0}^d\gamma(x_j)\big)\text{ for all }(x_0,\ldots,x_d)\in \mathcal{A}.
\end{equation}

Fix $x_0\in I$. Let $V=B_r(z)$ be the open ball defined above. Choose $v_1,\ldots,v_d\in \gamma^{-1}(B_r(\gamma(x_0)))$ such that $\dot{\gamma}(v_1),\ldots,\dot{\gamma}(v_d)$ span $\reals^d$. Without loss of generality we may assume that $g_0(v_j)\ne 0$ for each $1\le j\le d$. Let $S$ be a non-empty connected open neighbourhood of $\gamma(x_0)+z$ contained in $\cap_{j=1}^d(\gamma(v_j)+B_r(z))$. By equation (\ref{difffunc}), for each $1\le j\le d$,
\begin{equation}
\partial_{\dot{\gamma}(v_j)}G(s)=\frac{g_0'(v_j)}{g_0(v_j)}G(s) \text{ for all }s\in S
\end{equation}
where $\partial_wG$ denotes the directional derivative of $G$ in the direction $w$. Since the directions $\dot{\gamma}(v_1),\ldots,\dot{\gamma}(v_d)$ span $\reals^d$, it follows that there exist $c_S\in\complex$ and $\xi_S\in\complex^d$ such that $G(s) = c_S\exp(\xi_S\cdot s)$ for all $s\in S$. Thus there exist a neighbourhood $B\subset I$ of $x_0$, $c_B\ne 0$ and $\xi_B\in\complex^d$ such that for all $x\in B$, $g_0(x)=c_B\exp(\xi_B\cdot\gamma(x))$. Arguing as in the proof of Lemma~\ref{lemma:localise} using the assumption that $\Gamma$ is not contained in an affine hyperplane, this implies that there exist $c_0\ne 0$ and $\xi_0\in\complex^d$ such that $g_0(x)=c_0\exp(\xi_0\cdot\gamma(x))$ for all $x\in I$. Moreover, $c_0$ and $\xi_0$ are uniquely determined.

This proves Theorem~\ref{theorem:curves} in the case when each function $f_j$ is locally bounded. Therefore to prove the theorem in its full generality it suffices to establish that the functional equation (\ref{dfoldCauchy}) forces measurable functions $f_0,\ldots,f_d$ to be locally bounded. This type of strategy is standard; it is discussed in \cite{regularity}, for example.

Suppose that the $f_j$ and $F$ are merely measurable and let $U$, $V$ and the diffeomophism $\beta:U\to V$ be as defined above. By Lusin's theorem applied to the function $(x_1,\ldots,x_d)\mapsto\prod_{j=1}^d|f_j(x_j)|$, there exists a compact set $\mathcal{K}\subset U$ of positive $\lambda^d$-measure such that \[\inf_{(x_1,\ldots,x_d)\in\mathcal{K}}\prod_{j=1}^d|f_j(x_j)|\ge c_0>0\] for some positive constant $c_0$. There exists $I_1\subset I$ such that $\lambda(I\backslash I_1)=0$ and for each $x\in I_1$ the functional equation (\ref{dfoldCauchy}) is satisfied for the point $(x,x_1,\ldots,x_d)$ for $\lambda^d$-almost every $(x_1,\ldots,x_d)\in \mathcal{K}$. 

By Lusin's theorem applied to $F|_{\gamma(I)+V}$, there exist a compact subset $T$ of the open set $\gamma(I)+V$ and a constant $C<\infty$ such that 
\begin{equation}\label{lusinapp}\lambda^{d}((\gamma(I)+V)\backslash T)<\lambda^{d}(\beta(\mathcal{K}))\ne 0,\end{equation}
and for all $w\in T$, $|F(w)|\le C$. Let $x\in I_1$. Then $\lambda^d(\gamma(x)+\beta(\mathcal{K}))=\lambda^{d}(\beta(\mathcal{K}))$ so (\ref{lusinapp}) implies that $T\cap(\gamma(x)+\beta(\mathcal{K}))$ has positive $\lambda^d$-measure. Thus, there exists $w\in\mathcal{K}$ such that $|F(\gamma(x)+\beta(w))|\le C$. Hence $|f_0(x)|\le c_0^{-1}C$ and $f_0$ is $\lambda$-bounded.
\qed

\section{Three-fold Multiplicative Functions on Hypersurfaces}\label{section:threefold}

In this section we prove Theorem~\ref{theorem:threefold} by appropriately adapting the setup in \S\ref{section:strategy} for suitably non-degenerate hypersurfaces and appealing to Theorem~\ref{theorem:curves} with $d=2$ for the remaining cases.

Let $d\ge 3$. Let $M\subset\reals^d$ be a nowhere-flat hypersurface. By Lemma~\ref{lemma:localise}, we can assume without loss of generality that $M$ is bounded and has finite induced measure $\mu$; this lemma also implies the stated uniqueness.

Write points in $(\reals^d)^6\times(\reals^d)^6\times(\reals^d)^6$ as $(x,y,z)$ where $x=(x_1,\ldots,x_6)$, $y=(y_1,\ldots,y_6)$ and $z=(z_1,\ldots,z_6)$ with $x_j, y_j,z_j\in\reals^d$. Let $\Pi$ be the hyperplane in $(\reals^d)^6\times(\reals^d)^6\times(\reals^d)^6$ defined by the three equations \[x_1+y_2+z_3=x_4+y_5+z_6,\] \[x_2+y_3+z_1=x_5+y_6+z_4,\]\[x_3+y_1+z_2=x_6+y_4+z_5.\]  Let $\mathcal{P}_M=(M^6\times M^6\times M^6)\cap\Pi$. Write $\mathcal{S}_M$ for the set of smooth points of $\mathcal{P}_M$. Then $\mathcal{S}_M$ is a submanifold of $\mathbb{R}^{18d}$ of dimension $18(d-1)+15d-18d = 15d-18$. Write $\sigma$ for the volume measure on $\mathcal{S}_M$ associated to the induced Riemannian structure. Observe that the point $(x,y,z)\in\mathcal{P}_M$ lies in $\mathcal{S}_M$ if and only if 
\begin{eqnarray}\label{smooth3}
&T_{x_1}M+T_{y_2}M+T_{z_3}M+T_{x_4}M+T_{y_5}M+T_{z_6}M=\reals^d,& \\\label{smooth4}
&T_{x_2}M+T_{y_3}M+T_{z_1}M+T_{x_5}M+T_{y_6}M+T_{z_4}M=\reals^d,& \\\label{smooth5}
&T_{x_3}M+T_{y_1}M+T_{z_2}M+T_{x_6}M+T_{y_4}M+T_{z_5}M=\reals^d.&
\end{eqnarray}

Consider the $5d$-dimensional hyperplane $\Lambda\subset(\mathbb{R}^d)^6$ defined by $w_1+w_2+w_3=w_4+w_5+w_6$. The linear addition map $(\mathbb{R}^d)^6\times(\mathbb{R}^d)^6\times(\reals^d)^6\to(\mathbb{R}^d)^6$, $(x,y,z)\mapsto x+y+z$ restricts to a smooth map $\pi_M:\mathcal{S}_M\to\Lambda$. Write $\mathcal{R}_M$ for the set of regular points of $\pi_M$. Note that a necessary condition for $(x,y,z)\in\mathcal{S}_M$ to lie in $\mathcal{R}_M$ is that
\begin{equation}\label{reg3}
T_{x_j}M+T_{y_j}M+T_{z_j}M=\reals^d\text{ for all }1\le j\le 6.
\end{equation}

 For $S\subset M^3$, define $\mathcal{S}_M(S) = \{(x,y,z)\in\mathcal{S}_M\,|\, (x_i,y_j,z_k)\in S\,\, \text{for all} \,\,1\le i,j,k\le 6\}$. Let $f_1,f_2,f_3:M\to\complex$, and $F:3M\to\complex$ be measurable functions. Suppose that 
\begin{equation}\label{threeCP}
f_1(x)f_2(y)f_3(z)=F(x+y+z)\text{ for all }(x,y,z)\in S\subset M^3.
\end{equation} Then, similarly to the proof of Lemma~\ref{lemma:regroup}, whenever $w\in\Lambda$ is in the image of $\mathcal{S}_M(S)$ under $\pi_M$,  \[F(z_1)F(z_2)F(z_3)=F(z_4)F(z_5)F(z_6).\] 

The addition map $M^3\to\reals^d$ restricts to a smooth submersion $\alpha_M:\mathcal{M}\to R_M$ where we define $\mathcal{M}=\{(x,y,z)\in M^3\,\,|\,\,T_xM+T_yM+T_zM=\reals^d\}$, the set of regular points of the addition map, and $R_M=\{x+y+z\,\,|\,\,x,y,z\in M\text{ and }T_xM+T_yM+T_zM=\reals^d\}$. Then $R_M$ is an open subset of $\reals^d$ which is dense in $3M$. By (\ref{reg3}), we can also define the submersion $\gamma_M:\mathcal{R}_M\to R_M$ given by $(x,y,z)\mapsto x_1+y_1+z_1$. As before, a key ingredient will be the surjectivity of this submersion; we will assume that $\mathcal{G}(M)\subset\mathbb{S}^{d-1}$ is not contained in a planar circle so that $M$ is not itself a cylinder. For the case when $M$ is a cylinder, we will instead apply Theorem~\ref{theorem:curves}.

\begin{proposition}\label{prop:gamma3surj}
Let $M\subset\reals^d$ be a nowhere-flat hypersurface such that the set $\mathcal{G}(M)\subset\mathbb{S}^{d-1}$ is not contained in a planar circle. Then $\gamma_M:\mathcal{R}_M\to R_M$ is surjective.
\end{proposition}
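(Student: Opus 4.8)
The plan is to reproduce, with $18$ base points and the three equations cutting out $\Pi$, the strategy used for Proposition~\ref{prop:gammasurjective}. The first step is to establish the analogue of Lemma~\ref{lemma:geomcond}: sufficient conditions on the Gauss normals $\mathcal{G}(x_j),\mathcal{G}(y_j),\mathcal{G}(z_j)$, $1\le j\le 6$, under which a point $(x,y,z)\in\mathcal{P}_M$ lies in $\mathcal{R}_M$. As in \S\ref{section:strategy}, at a smooth point one identifies $T_{(x,y,z)}\mathcal{S}_M$ with the space of $(u,v,t)$ with $u_j\in T_{x_j}M$, $v_j\in T_{y_j}M$, $t_j\in T_{z_j}M$ satisfying the linearised $\Pi$-equations, and $(d\pi_M)(u,v,t)=u+v+t$. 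The key observation is that on the subspace $\{u_j+v_j+t_j=0\text{ for }1\le j\le 6\}$---that is, where $(u_j,v_j,t_j)$ lies in the kernel $K_j$ of the addition map $T_{x_j}M\times T_{y_j}M\times T_{z_j}M\to\reals^d$---the three linearised $\Pi$-equations sum identically to zero, so only two of them are independent there. Hence $\pi_M$ is a submersion at $(x,y,z)$ precisely when (\ref{reg3}) holds and the linear map $\prod_{j=1}^{6}K_j\to(\reals^d)^2$ extracting the two independent constraints is onto. Unwinding this, one finds that, in addition to the (weak) transversality conditions (\ref{smooth3})--(\ref{smooth5}) and (\ref{reg3}), it is enough to impose two ``general position'' hypotheses of the same type as (\ref{regular2}), each asserting that an intersection of six of the planes $\mathrm{span}\{\mathcal{G}(a),\mathcal{G}(b)\}$ (for suitable pairs of base points $a,b$ among the eighteen) is $\{0\}$.

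Granting that, the second step is: given $w\in R_M$, to exhibit $(x,y,z)\in\mathcal{R}_M$ with $\gamma_M(x,y,z)=x_1+y_1+z_1=w$. The three equations of $\Pi$ involve the $18$ variables in three disjoint groups of six, namely $(x_1,y_2,z_3,x_4,y_5,z_6)$, $(x_2,y_3,z_1,x_5,y_6,z_4)$ and $(x_3,y_1,z_2,x_6,y_4,z_5)$, each required to lie on $N:=\{a\in M^6\mid a_1+a_2+a_3=a_4+a_5+a_6\}$, while $w$ constrains exactly one variable from each group. A base configuration in $\mathcal{P}_M$ with $x_1+y_1+z_1=w$ is obtained by first choosing $(x_1,y_1,z_1)\in\alpha_M^{-1}(w)$ (non-empty since $w\in R_M$) and then setting $x_{j+3}=x_j$, $y_{j+3}=y_j$, $z_{j+3}=z_j$ for $j=1,2,3$. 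This point is too degenerate to lie in $\mathcal{R}_M$; one then perturbs---moving each group of six along the level set of its defining sum, moving $x_2,x_3,y_2,y_3,z_2,z_3$ freely, and moving $(x_1,y_1,z_1)$ within $\alpha_M^{-1}(w)$---so as to realise all the conditions from the first step. Each required non-degeneracy can be reached because $M$ is nowhere-flat (so $\mathcal{G}$ is nowhere locally constant and the relevant addition maps can be arranged to be submersive) and because $\mathcal{G}(M)$ is not contained in a planar circle (so, as in the proof of Proposition~\ref{prop:gammasurjective}, points whose normals avoid a prescribed $2$-plane are available); when a variable is pinned to an intersection of the form $M\cap(c-M)$, the extra room coming from having eighteen points and only two effective constraints still supplies the needed normal, or else one appeals to a density statement in the spirit of Lemma~\ref{lemma:Ddense}.

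The step I expect to be the main obstacle is the perturbation bookkeeping in the second part: exactly as in \S\ref{section:strategy}, the equations defining $\Pi$ couple the base points, so they cannot be moved independently and several must be chosen jointly---just as $x_3,y_3,x_4$ had to be chosen together at the end of \S\ref{section:strategy}. With three linearised constraints in play (two of them effective) and eighteen points, this is a heavier version of the same difficulty, and it is likely that some residual degenerate case---the analogue of the possibility $\overline{\mathcal{G}}(x_3)=\overline{\mathcal{G}}(y_3)$ eliminated by the final lemma of \S\ref{section:strategy}---will have to be ruled out by a separate perturbation argument. A secondary point requiring care is that the hypothesis used here, that $\mathcal{G}(M)$ is not contained in a single planar circle, is genuinely weaker than cylinder-freeness, so one must check that every genericity step survives without assuming that \emph{every} open subset of $M$ has Gauss image off a planar circle.
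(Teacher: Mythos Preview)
Your plan follows the two-fold template faithfully and could in principle be pushed through, but it is substantially heavier than the paper's argument, and the extra weight comes from a misdiagnosis: the diagonal configuration is \emph{not} too degenerate to lie in $\mathcal{R}_M$.

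The paper stays entirely on the diagonal $x_{j+3}=x_j$, $y_{j+3}=y_j$, $z_{j+3}=z_j$ ($j=1,2,3$). There the three $\Pi$-equations hold automatically, so apart from $x_1+y_1+z_1=w$ the nine base points are unconstrained. With one further collapse, $z_2=x_1$, the paper chooses $x_2,y_2,x_3,y_3,z_3$ directly---no perturbation---so that a \emph{single} condition
\[
\mathrm{span}\{\mathcal{G}(x_1),\mathcal{G}(z_1)\}\cap\mathrm{span}\{\mathcal{G}(x_2),\mathcal{G}(y_2)\}\cap\mathrm{span}\{\mathcal{G}(y_3),\mathcal{G}(z_3)\}=\{0\}
\]
holds, and then checks that $(d\pi_M)_{(x,y,z)}$ is onto by a two-stage parametrisation: given $q\in\Lambda$, first pass to auxiliary sums $p_1=u_1+w_1$, $p_2=u_2+v_2$, $p_3=v_3+w_3$ (and similarly $p_4,p_5,p_6$) and arrange $p_1+p_2+p_3=p_4+p_5+p_6$; then, with the $p_j$ fixed, vary within each pair to satisfy the first linearised $\Pi$-equation, whereupon the remaining two follow from $q\in\Lambda$ and the $p$-relation. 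Because of diagonality (so $T_{x_{j+3}}M=T_{x_j}M$, etc.) and the choice $z_2=x_1$, the two surjectivity steps reduce respectively to $T_{x_1}M+T_{y_1}M=\reals^d$ and to the single three-plane intersection above.

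This buys exactly what you were worried about. There is no perturbation off the diagonal, no residual degenerate case, no analogue of the final perturbation lemma used for Proposition~\ref{prop:gammasurjective}, and no density statement like Lemma~\ref{lemma:Ddense}. Since no base point is ever forced to lie on a slice $M\cap(c-M)$, only the \emph{global} hypothesis that $\mathcal{G}(M)$ avoids a single planar circle is invoked, so your concern about the gap between this and cylinder-freeness evaporates. Your anticipated pair of six-plane general-position conditions is replaced by one three-plane condition precisely because diagonality identifies the $j\ge4$ tangent spaces with the $j\le3$ ones; the three-fold product structure, far from complicating the bookkeeping, is what makes a direct construction possible here where it was not in the two-fold case.
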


\begin{proof}
Let $w\in R_M$. Write $w=x_1+y_1+z_1$ for $x_1,y_1,z_1\in M$ such that $T_{x_1}M+T_{y_1}M+T_{z_1}M=\reals^d$. By relabelling if necessary we may assume that $\overline{\mathcal{G}}(x_1)\ne\overline{\mathcal{G}}(y_1), \overline{\mathcal{G}}(z_1)$. Set $x_4=x_1$, $y_4=y_1$, $z_4=z_1$. 

Since $\mathcal{G}(M)$ is not contained in a planar circle, there exist $x_2,y_2\in M$ such that $\mathcal{G}(x_2),\mathcal{G}(y_2)\notin\text{span}\{\mathcal{G}(x_1),\mathcal{G}(z_1)\}$. Since $M$ is nowhere-flat, we may take $\overline{\mathcal{G}}(x_2)\ne\overline{\mathcal{G}}(y_2)$. Define $\Delta=\text{span}\{\mathcal{G}(x_1),\mathcal{G}(z_1)\}\cap\text{span}\{\mathcal{G}(x_2),\mathcal{G}(y_2)\}$. Then $\dim\Delta\le 1$. Set $z_2=x_1$, $x_5=x_2$, $y_5=y_2$, $z_5=z_2$. 

Let $x_3\in M$ be any element of $M$. Choose $y_3\in M$ such that $\mathcal{G}(y_3)\notin\Delta$. Since $\mathcal{G}(M)$ is not contained in a planar circle, there exists $z_3\in M$ such that $\mathcal{G}(z_3)\notin \text{span}\{\mathcal{G}(y_3)\}+\Delta$. Then \begin{equation}\label{intersections} \text{span}\{\mathcal{G}(x_1),\mathcal{G}(z_1)\}\cap\text{span}\{\mathcal{G}(x_2),\mathcal{G}(y_2)\}\cap\text{span}\{\mathcal{G}(y_3),\mathcal{G}(z_3)\}=\{0\}.\end{equation} Set $x_6=x_3$, $y_6=y_3$, $z_6=z_3$.

Then \[x_1+y_2+z_3=x_4+y_5+z_6,\] \[x_2+y_3+z_1=x_5+y_6+z_4,\] \[x_3+y_1+z_2=x_6+y_4+z_5\] so the point $(x,y,z)$ for $x=(x_1,\ldots,x_6)$, $y=(y_1,\ldots,y_6)$, $z=(z_1,\ldots,z_6)$ lies in $\mathcal{P}_M$. By construction, $(x,y,z)$ satisfies (\ref{smooth3}), (\ref{smooth4}) and (\ref{smooth5}) so $(x,y,z)\in\mathcal{S}_M$.

We will identify $T_{(x,y,z)}\mathcal{S}_M$ with $(\prod_{j=1}^6T_{x_j}M\times\prod_{j=1}^6T_{y_j}M\times\prod_{j=1}^6T_{z_j}M)\cap\Pi$ and $T_{x+y+z}\Lambda$ with $\Lambda$ in the canonical way. Then $(d\pi_M)_{(x,y,z)}:T_{(x,y,z)}\mathcal{S}_M\to T_{x+y+z}\Lambda$ is given by \[(d\pi_M)_{(x,y,z)}(u,v,w)=u+v+w\] where the tuples $u=(u_1,\ldots,u_6)$, $v=(v_1,\ldots,v_6)$, $w=(w_1,\ldots,w_6)$ satisfy $u_j\in T_{x_j}M$, $v_j\in T_{y_j}M$, $w_j\in T_{z_j}M$ for $1\le j\le 6$ and the linear equations \[u_1+v_2+w_3=u_4+v_5+w_6,\]  \[u_2+v_3+w_1=u_5+v_6+w_4,\] \[u_3+v_1+w_2=u_6+v_4+w_5.\]

Let $q\in\Lambda$, so $q_1+q_2+q_3=q_4+q_5+q_6$. For each $1\le j\le 6$, let $(u_j,v_j,w_j)$ vary over the $(2d-3)$-dimensional affine subspace of $T_{x_j}M\times T_{y_j}M\times T_{z_j}M$ given by \begin{equation}\label{triplesum}u_j+v_j+w_j=q_j.\end{equation} 
Set 
\begin{eqnarray}\label{relations}&p_1=u_1+w_1,\,p_2=u_2+v_2,\,p_3=v_3+w_3,&\\
\label{relations2}&p_4=u_4+w_4,\,p_5=u_5+v_5,\,p_6=v_6+w_6.&\end{eqnarray} Since $\overline{\mathcal{G}}(y_1)\ne\overline{\mathcal{G}}(x_1),\,\overline{\mathcal{G}}(z_1)$, condition (\ref{triplesum}) implies that $p_1$ may be varied freely in $(q_1-T_{y_1}M)\cap(T_{x_1}M+T_{z_1}M)=q_1+T_{y_1}M$ and then $v_1=q_1-p_1$ is determined. Similarly, $p_2$ may be varied freely over a certain translate of $T_{z_2}M$ and then $w_2=q_2-p_2$ is determined, and so on for $p_3$, $p_4$, $p_5$, $p_6$ varying over translates of $T_{x_3}M$, $T_{y_4}M$, $T_{z_5}M$, $T_{x_6}M$ which then determine $u_3$, $v_4$, $w_5$, $u_6$ respectively. Since \[T_{y_1}M+T_{z_2}M+T_{x_3}M-T_{y_4}M-T_{z_5}M-T_{x_6}M=\reals^d,\] as $p_1,\ldots,p_6$ vary, the vector $p_1+p_2+p_3-p_4-p_5-p_6$ varies over all of $\reals^d$. Fix an admissible set of $p_1,\ldots,p_6$ such that 
\begin{equation}\label{psum}p_1+p_2+p_3=p_4+p_5+p_6.\end{equation} Since $\overline{\mathcal{G}}(x_1)\ne\overline{\mathcal{G}}(z_1)$, the relation (\ref{relations}) implies that we may vary $u_1$ freely in a certain translate of the linear subspace $T_{x_1}M\cap T_{z_1}M$ and then $w_1=p_1-u_1$ is determined, and so on for $v_2$, $w_3$, $u_4$, $v_5$, $w_6$ varying over translates of $T_{y_2}M\cap T_{x_2}M$, $T_{z_3}M\cap T_{y_3}M$, $T_{x_4}M\cap T_{z_4}M$, $T_{y_5}M\cap T_{x_5}M$, $T_{z_6}M\cap T_{y_6}M$ which then determine $u_2$, $v_3$, $w_4$, $u_5$, $v_6$ respectively. By (\ref{intersections}), it follows that \[T_{x_1}M\cap T_{z_1}M+T_{y_2}M\cap T_{x_2}M+T_{z_3}M\cap T_{y_3}M\]\[-\,T_{x_4}M\cap T_{z_4}M-T_{y_5}M\cap T_{x_5}M-T_{z_6}M\cap T_{y_6}M\]
\[=T_{x_1}M\cap T_{z_1}M+T_{y_2}M\cap T_{x_2}M+T_{z_3}M\cap T_{y_3}M\]
\[=\reals^d.\] Therefore as $u_1$,$v_2$, $w_3$, $u_4$, $v_5$, $w_6$ vary, the vector $u_1+v_2+w_3-u_4-v_5-w_6$ varies over all of $\reals^d$; choose admissible $u_1$,$v_2$, $w_3$, $u_4$, $v_5$, $w_6$ such that \begin{equation}\label{uvwsum}u_1+v_2+w_3=u_4+v_5+w_6.\end{equation}

By (\ref{psum}) and (\ref{uvwsum}), 
\[u_2+v_3+w_1=(p_2-v_2)+(p_3-w_3)+(p_1-u_1)\]\[=(p_4+p_5+p_6)-(u_4+v_5+w_6)=u_5+v_6+w_4\]
and then by (\ref{triplesum}),
\[u_3+v_1+w_2=(q_3-p_3)+(q_1-p_1)+(q_2-p_2)\]\[=(q_4+q_5+q_6)-(p_4+p_5+p_6)=u_6+v_4+w_5.\]
\end{proof}

Suppose that the functions $f_1$, $f_2$, $f_3$, $F$ above are measurable, that $f_1$, $f_2$, $f_3$ vanish only on a $\mu$-null set and that $\mu^3(M^3\backslash S)=0$. Similarly to \S\ref{section:strategy}, by Proposition~\ref{prop:gamma3surj} it follows that for each $z\in R_M$, there exists a non-empty open ball $T_z\subset\reals^d$ with center at $z$, translates $B_z$ and $C_z$ of $T_z$ and a measurable function $H:T_z+B_z+C_z\to\complex$ such that \[H(w_1+w_2+w_3)=F(w_1)F(w_2)F(w_3)\] for $\lambda^{3d}$-almost every triple $(w_1,w_2,w_3)\in T_z\times B_z\times C_z$. By fixing a typical $w_3$ and applying Lemma~\ref{lemma:classical}, it follows that there exist $c_z\in\complex\backslash\{0\}$ and $v_z\in\complex^d$ such that $F(w)=c_z\exp(v_z\cdot w)$ for $\lambda^d$-almost every $w\in T_z$. 

Define an equivalence relation $\sim$ on $R_M$ by declaring $w \sim u$ whenever $v_w = v_u$. This partitions $R_M$ into a collection of open sets $\{R_j\}_{j\in J}$ such that each $z\in R_j$ has a neighbourhood on which $F$ is of the form $F(z) = c\exp(v_j\cdot z)$, up to a null set, for some $c$ depending on the neighbourhood. 

For $j\in J$, consider the set $M_j$ of all $x\in M$ for which there exist $y,z\in M$ such that $x+y+z\in R_j$. Since $M$ is nowhere-flat, the sets $\{M_j\}_{j\in J}$ are an open cover for $M$. By Lemma~\ref{lemma:localise}, they form a partition for $M$. Assuming $M$ is connected, it follows that $M_j=M$ for some $j\in J$. This means that each $x\in M$ has an open neighbourhood $U$ in $M$ on which $f_1$ is of the form $f_1(x) = c_U\exp(v_j\cdot x)$ except for a $\mu$-null set for some constant $c_U\ne 0$. Since $M$ is connected, it follows that there is a constant $c_1\ne 0$ such that $f_1(x) = c_1\exp(v_j\cdot x)$ for $\mu$-almost every all $x\in M$. Thus the proof of Theorem~\ref{theorem:threefold} is complete for the case when $d\ge 3$ and $\mathcal{G}(M)$ is not contained in a planar circle.

Suppose now that $M\subset\reals^d$ is any nowhere-flat hypersurface. The case $d=2$ in Theorem~\ref{theorem:threefold} follows immediately from Theorem~\ref{theorem:curves}. For $d\ge 3$, each point of $x\in M$ has a small open neighbourhood $U$ such that either $U$ is a cylinder or $\mathcal{G}(U)$ is not contained in a planar circle. By Lemma~\ref{lemma:localise}, the following lemma therefore completes the proof of Theorem~\ref{theorem:threefold}. The additive analogue of the theorem is, once again, similar.

\begin{lemma}
Let $d\ge 3$. Suppose that $\Gamma\subset\reals^2$ is a nowhere-flat bounded smooth embedded planar curve with finite induced measure $\gamma$ and $V\subset\reals^{d-2}$ is a non-empty open ball. Then the nowhere-flat hypersurface $M=\Gamma\times V\subset\reals^d$ satisfies the following property. Suppose that $f_1, f_2, f_3:M\to\complex$ and $F:3M\to\complex$ are measurable such that $f_1$, $f_2$, $f_3$ vanish only on a $\gamma\times\lambda^{d-2}$-null set and for $(\gamma\times\lambda^{d-2})^3$-almost every $(x_1,x_2,x_3)\in M^3$, \[f_1(x_1)f_2(x_2)f_3(x_3)=F(x_1+x_2+x_3).\] Then there exist $c\in\complex\backslash\{0\}$ and $v\in\complex^d$ such that \[f_1(x)=c\exp(v\cdot x)\text{ for }\gamma\times\lambda^{d-2}\text{-a.e. }x\in M.\]  
\end{lemma}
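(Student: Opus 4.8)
The plan is to slice $M = \Gamma \times V$ by its second factor, reduce each slice to Theorem~\ref{theorem:curves} with $d = 2$, and then feed the slicewise data into the classical two-fold Cauchy--Pexider lemma (Lemma~\ref{lemma:classical}) on the ball $V \subset \reals^{d-2}$. Write points of $\reals^d = \reals^2 \times \reals^{d-2}$ as $x = (p,q)$ with $p \in \reals^2$, $q \in \reals^{d-2}$; the induced (product) Riemannian measure on $M$ is $\mu := \gamma \times \lambda^{d-2}$. By Fubini, for $\lambda^{d-2}$-a.e.\ $q$ the function $p \mapsto f_i(p,q)$ vanishes only on a $\gamma$-null set ($i = 1,2,3$), and for $(\lambda^{d-2})^3$-a.e.\ $(q_1,q_2,q_3) \in V^3$ the identity $f_1(p_1,q_1)f_2(p_2,q_2)f_3(p_3,q_3) = F(p_1+p_2+p_3,\,q_1+q_2+q_3)$ holds for $\gamma^3$-a.e.\ $(p_1,p_2,p_3) \in \Gamma^3$. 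Since $\Gamma$ is a nowhere-flat curve in $\reals^2$ it has nonvanishing curvature, so no open subarc lies on a line; we may also take $\Gamma$ connected (otherwise work component by component). Thus Theorem~\ref{theorem:curves} with $d = 2$, applied for each such $(q_1,q_2,q_3)$ to the triple $p \mapsto f_i(p,q_i)$ ($i=1,2,3$) and the function $s \mapsto F(s,\,q_1+q_2+q_3)$, yields a unique $v(q_1,q_2,q_3) \in \complex^2$ and nonzero constants $d_i(q_1,q_2,q_3)$ with $f_i(p,q_i) = d_i \exp(v \cdot p)$ for $\gamma$-a.e.\ $p \in \Gamma$.

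Next I would globalise. Fixing $q_1$ and varying $(q_2,q_3)$, the single function $p \mapsto f_1(p,q_1)$ acquires two representations $d_1 \exp(v \cdot p) = d_1' \exp(v' \cdot p)$ valid a.e.\ on $\Gamma$; since $\Gamma$ lies on no line, the argument in the proof of Lemma~\ref{lemma:localise} forces $v = v'$ (and $d_1 = d_1'$). Hence $v(q_1,q_2,q_3)$ is independent of $(q_2,q_3)$, and by symmetry also of $q_1$ and of $q_3$, so $v \equiv v_\Gamma \in \complex^2$ is a fixed vector. Putting $c_i(q) := f_i(p,q)\exp(-v_\Gamma \cdot p)$ (independent of $p$ for a.e.\ $p$, hence a well-defined measurable function of $q$), we get $f_i(p,q) = c_i(q)\exp(v_\Gamma \cdot p)$ for $\mu$-a.e.\ $(p,q)$, with each $c_i$ vanishing only on a $\lambda^{d-2}$-null set (otherwise $f_i$ would vanish on a set of positive $\mu$-measure). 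Substituting back and cancelling the nowhere-vanishing exponential, $c_1(q_1)c_2(q_2)c_3(q_3) = F(s,\,q_1+q_2+q_3)\exp(-v_\Gamma\cdot s)$ for a.e.\ $(q_1,q_2,q_3) \in V^3$ and a.e.\ $s$ in the (open, by nowhere-flatness of $\Gamma$) set of regular values of the addition map $\Gamma^3 \to \reals^2$; here passing between ``a.e.\ triple $(p_1,p_2,p_3)$'' and ``a.e.\ $s$'' is an application of Lemma~\ref{lemma:abscont}. Since the left side does not depend on $s$, the right side is a.e.-constant in $s$ and takes the same value for all triples $(q_1,q_2,q_3)$ sharing a given sum; thus there is a measurable $G$ on $3V$ with $c_1(q_1)c_2(q_2)c_3(q_3) = G(q_1+q_2+q_3)$ for $(\lambda^{d-2})^3$-a.e.\ $(q_1,q_2,q_3) \in V^3$.

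It remains to solve this three-fold Cauchy--Pexider equation on the ball $V \subset \reals^{d-2}$ (note $d \ge 3$, so $d - 2 \ge 1$). Freezing $q_3 = a$ at a typical value, so that $c_3(a) \neq 0$ and the identity still holds for $(\lambda^{d-2})^2$-a.e.\ $(q_1,q_2)$, it becomes the two-fold equation $c_1(q_1)\big(c_2(q_2)c_3(a)\big) = G(q_1+q_2+a)$ on $V$, with both $c_1$ and $c_2 c_3(a)$ vanishing only on $\lambda^{d-2}$-null sets. The exact case of Lemma~\ref{lemma:classical} then provides $\kappa \in \complex \setminus \{0\}$ and $w \in \complex^{d-2}$ with $c_1(q) = \kappa \exp(w \cdot q)$ for $\lambda^{d-2}$-a.e.\ $q \in V$. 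Therefore $f_1(p,q) = \kappa \exp(v_\Gamma \cdot p + w \cdot q) = \kappa \exp(v \cdot x)$ with $x = (p,q)$ and $v = (v_\Gamma, w) \in \complex^d$, which is the claimed form.

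The only genuinely delicate part is the bookkeeping with the almost-everywhere quantifiers in the two reductions: extracting the single vector $v_\Gamma$ from the slicewise-unique parameters $v(q_1,q_2,q_3)$ is a rigidity argument powered by the uniqueness in Theorem~\ref{theorem:curves} (equivalently Lemma~\ref{lemma:localise}), and recovering a bona fide lower-dimensional Cauchy--Pexider equation for the slice-constants $c_i$ requires eliminating the auxiliary variable $s \in \reals^2$ via Lemma~\ref{lemma:abscont}. Neither step involves new ideas, but both must be carried out carefully; everything else is an immediate consequence of Theorem~\ref{theorem:curves} and Lemma~\ref{lemma:classical}.
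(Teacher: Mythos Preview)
Your proof is correct but runs in the opposite order to the paper's. The paper fixes the curve coordinates $(t_1,t_2,t_3)\in\Gamma^3$ first and applies the classical Cauchy--Pexider lemma (Lemma~\ref{lemma:classical}) to the slice functions $y\mapsto f_j(t_j,y)$ on the ball $V$; this produces constants $c_{j,t_j}$ satisfying a three-fold equation on $\Gamma^3$, which is exactly the input for Theorem~\ref{theorem:curves} with $d=2$. You instead fix the ball coordinates $(q_1,q_2,q_3)\in V^3$ first, invoke Theorem~\ref{theorem:curves} on $\Gamma$, extract the common $v_\Gamma$ via uniqueness, and then reduce the residual three-fold equation $c_1(q_1)c_2(q_2)c_3(q_3)=G(q_1+q_2+q_3)$ on $V^3$ to Lemma~\ref{lemma:classical} by freezing one variable. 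The two arguments use the same pair of ingredients in swapped order; the paper's ordering is marginally tidier because its residual equation already sits inside the scope of Theorem~\ref{theorem:curves} without the extra freezing step, but there is no substantive difference.
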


\begin{proof}
Write points in $M$ as $(t,y)$ for $t\in \Gamma$, $y\in V$. For $1\le j\le 3$ and each $t\in\Gamma$, define $f_j^t:V\to\complex$ by $f_j^t(y)=f_j(t,y)$ and for each $s\in 3\Gamma$ define $F^s:3V\to\complex$ by $F^s(z)=F(s,z)$. Then for $\gamma^3$-almost every $(t_1,t_2,t_3)\in \Gamma^3$, the functions $f_j^{t_j}$ for $1\le j\le 3$ and $F^{t_1+t_2+t_3}$ are measurable, vanish only on a $\lambda^{d-2}$-null set and satisfy \[f_1^{t_1}(y_1)f_2^{t_2}(y_2)f_3^{t_3}(y_3)=F^{t_1+t_2+t_3}(y_1+y_2+y_3)\] for $\lambda^{3(d-2)}$-almost every $(y_1,y_2,y_3)\in V^3$.

By Lemma~\ref{lemma:classical}, it follows that for $1\le j\le 3$ and $\gamma^3$-almost every $(t_1,t_2,t_3)\in \Gamma^3$, \begin{equation}\label{fform}f_j^{t_j}(y)=c_{j,t_j}\exp(v_{t_1+t_2+t_3}\cdot y)\text{ for }\lambda^{d-2}\text{-almost every }y\in V\end{equation} and \[F^{t_1+t_2+t_3}(z)=c_{t_1+t_2+t_3}\exp(v_{t_1+t_2+t_3}\cdot z)\text{ for }\lambda^{3(d-2)}\text{-almost every }z\in 3V,\] where $c_{j,t_j},c_{t_1+t_2+t_3}\in\complex$ and $v_{t_1+t_2+t_3}\in\complex^{d-2}$. 

Thus for $\gamma^3$-almost every $(t_1,t_2,t_3)\in \Gamma^3$, \[c_{1,t_1}c_{2,t_2}c_{3,t_3}=c_{t_1+t_2+t_3}.\] Since $f_j$ is measurable it follows that the function $t\mapsto c_{j,t}$ defined $\gamma$-almost everywhere is measurable for each $1\le j\le 3$. Since $F$ is measurable and $\Gamma$ is nowhere-flat it follows that the functions $s\mapsto c_s$ and $s\mapsto v_s$ defined $\lambda^2$-almost everywhere on an open subset of $\reals^2$ which is dense in $3\Gamma$ are measurable. Since for each $1\le j\le 3$ the left-hand side of (\ref{fform}) is independent of $t_i,t_k$ for $\{i,k\}=\{1,2,3\}\backslash\{j\}$, it follows that for $\gamma^3$-almost every $(t_1,t_2,t_3)\in\Gamma^3$, the complex vector $v_{t_1+t_2+t_3}=v'$ is constant. By Theorem~\ref{theorem:curves}, it follows that there exist $b_j\in\complex$ for $1\le j\le 3$ and $v''\in\complex^2$ such that $c_{j,t}=b_j\exp(v''\cdot t)$ for $\gamma$-almost every $t\in\Gamma$ for each $1\le j\le 3$. Thus \[f_j(x)=b_j\exp(v\cdot x)\text{ for }\gamma\times\lambda^{d-2}\text{-almost every }x\in M,\] where $v=(v'',v')\in\complex^d$.
\end{proof}

\end{document}